\crefname{subsection}{subsection}{subsections}
\newcommand{\CM}[1]{{\color{black}{#1}}}
\newcommand{\scr}{\mathcal}
\newcommand{\mb}{\mathbb}
\newcommand{\til}{\widetilde}
\newcommand{\whp}{{\it w.h.p.}}
\newcommand{\st}{{s.t.}}
\newcommand{\lrpar}[1]{\left( #1 \right)}
\newcommand{\eps}{\epsilon}
\newcommand{\bP}{\mathbb{P}}
\newcommand{\bE}{\mathbb{E}}
\newcommand{\cS}{\mathcal{S}}
\newcommand{\cP}{\mathcal{P}}
\newcommand{\cM}{\mathcal{M}}
\newcommand{\cH}{\mathcal{H}}
\newcommand{\Sc}{\cS^c}
\renewcommand{\Pr}{\mb{P}}
\renewcommand{\st}{m^*}
\newcommand{\m}{m}
\newcommand{\ms}{m^*}
\newcommand{\ES}[1]{{\color{black}{#1}}}
\newcommand{\tsn}{T_{\cS_n}}
\newcommand{\bin}{\mathrm{Bin}}
\newcommand{\supp}{\mathrm{Supp}}
\newtheorem{thm}{Theorem}
\newtheorem{lemma}[thm]{Lemma}
\newtheorem{cor}[thm]{Corollary}
\newtheorem{question}{Question}
\newtheorem{proposition}[thm]{Proposition}
\newenvironment{varalgorithm}[1]
  {\algorithm}
  {\endalgorithm}
\newcommand{\D}{\scr{D}}
\newcommand{\subscript}[2]{$#1 _ #2$}
\theoremstyle{definition}
\newtheorem{definition}{Definition}
\theoremstyle{definition}
\newtheorem{remark}[thm]{Remark}
\title{Sharp Thresholds in Adaptive Random Graph Processes}
\author{Calum MacRury
\thanks{Graduate School of Business, Columbia University, New York, NY, USA,
\texttt{cm4379@columbia.edu}.  The majority of the author's work done on this paper was while they were affiliated with the Department of Computer Science at the University of Toronto.}
\and
Erlang Surya
\thanks{Department of Mathematics, University of California, San Diego, CA, USA,
\texttt{esurya@ucsd.edu}. Supported by NSF grant DMS-2225631.}
}
\date{}
\begin{document}
	
\maketitle



\begin{abstract}
The $\mathcal{D}$-process is a single player game in which the player is initially presented the empty graph on $n$ vertices. In each step, a subset of edges $X$ is independently sampled according to a distribution $\mathcal{D}$. The player then selects one edge $e$ from $X$, and adds $e$ to its current graph. For a fixed monotone increasing graph property $\scr{P}$, the objective of the player is to force the graph to satisfy $\scr{P}$ in as few steps as possible. The $\mathcal{D}$-process generalizes both the Achlioptas process and the semi-random graph process.

We prove a sufficient condition for the existence of a sharp threshold for $\mathcal{P}$ in the $\mathcal{D}$-process. 
Using this condition, in the semi-random process we prove the existence of a sharp threshold when $\mathcal{P}$ corresponds to being Hamiltonian or to containing a perfect matching. This resolves two of the open questions proposed by Ben-Eliezer et al. (RSA, 2020).
\end{abstract}

Keywords: Random graph processes, sharp thresholds, online decision-making

\newpage

\section{Introduction}

Let $n \in \mb{N}$, and $K_n$ be the complete graph on vertex set $[n] :=\{1,\ldots ,n\}$. Suppose that $\D$ is a fixed distribution on (non-empty) subsets of edges of $K_n$.
The $\D$-\textbf{adaptive random graph process} (shortly, $\D$-process) is a single player game in which the player is initially presented a graph $G_0$ on vertex set $[n]$, which unless specified otherwise, will be the empty graph. In each \textbf{step} (or round) $t \in \mb{N}$, a subset of edges $X_t$ is sampled from $\D$. The player (who is aware of graph $G_t$ and the subset $X_t$) must then select an edge $Y_t$ from $X_t$ and add it to $G_{t-1}$ to form $G_{t}$.
In this paper, the goal of the player is to devise a strategy which builds a (multi)graph satisfying a given monotone increasing property $\scr{P}$ in as few rounds as possible. Some examples of $\D$-processes are the Erdős–Rényi random graph process \cite{erdHos1960evolution} (where multi-edges are allowed), the Achlioptas process \cite{bohman2001avoiding}, the semi-random graph process \cite{ben2020semi}  (see Section \ref{sec:application_semi_random}), and the semi-random tree process \cite{burova2022semi}. 

Formally, a \textbf{strategy} (i.e., algorithm) $\scr{S} = \scr{S}_n$ is defined by specifying a sequence of functions $(s_{t})_{t=1}^{\infty}$, where for each $t \in \mb{N}$, $s_t(G_{t-1},X_t)$ is a distribution on $X_t$ which depends on the graph at step $t-1$ (and the edges of $X_t$). Then, an edge $Y_t \in X_t$ is chosen according to this distribution. If $s_t$ is an atomic distribution, then $Y_t$ is determined by $G_{t-1}$ and $X_t$. Note that if for each $t \ge 1$, $s_t$ is atomic, then
we say that the strategy $\scr{S}$ is \textbf{deterministic}. In this case, we may assume that each $s_t$ is a function which depends only on $X_1, \ldots , X_t$.

We denote $(G_{i}^{\scr{S}}(n))_{i=0}^{t}$ as the sequence of random (multi)graphs obtained by following the strategy $\scr{S}$ for $t$ rounds; where we shorten $G_{t}^{\scr{S}}(n)$
to $G_t$ or $G_{t}(n)$ when there is no ambiguity. Moreover, we define the stopping time $T_{\scr{S}} = T_{\scr{S}}(n)$ to be the minimum $t \ge 1$
such that $G_{t}^{\scr{S}}(n)$ satisfies $\scr{P}$, where $T_{\scr{S}} := \infty$ if no such $t$ exists. 
All our asymptotics are with respect to $n \rightarrow \infty$, and \textbf{with high probability} (w.h.p.) means with probability tending to $1$
as $n \rightarrow \infty$.

\begin{definition}[Sharp Threshold]	
	Given an edge monotonic property $\scr{P}$, we say that
	there exists a \textbf{sharp threshold} for $\scr{P}$ in the $\D$-process (or $\scr{P}$
	admits a sharp threshold),
	provided there exists a function $\st = \st_{\cP,\D}(n)$ such that for every $\eps>0$:
	\begin{enumerate}
	    \item There \textit{exists} a strategy $\cS'_n$ such that $\Pr(T_{\cS'_n}\le (1+\eps)\st)=1-o(1)$.
	    \item \textit{Every} strategy $\cS_n$ satisfies $\Pr(\tsn\le (1-\eps)\st)=o(1)$. 
	\end{enumerate}
When $\st$ satisfies these conditions, we say that it is a sharp threshold of $\cP$ in the $\D$-process.
\end{definition}

In general, it is difficult to show that a property has a sharp threshold for a $\D$-process due to the fact that one needs to prove a ``negative" result that \textit{any} algorithm which executes for too few rounds has win probability at most $o(1)$. This contrasts with many sharp threshold results
for (non-adaptive) random graph processes, where the negative result follows easily via the first-moment method. In this paper, we develop a framework which allows us to show that if a property satisfies a ``positive'' result involving the existence of a certain algorithm, then it must admit a sharp threshold. We formalize this framework in a condition we refer to as \textit{edge-replaceability} (see \Cref{def:edge_replaceable}).

There have been a few results which establish the existence of sharp thresholds for adaptive random graph processes. 
In \cite{ben-eliezer_fast_2020,ben-eliezer_fast_journal_2020}, Ben-Eliezer et al. showed that the property of containing an arbitrary spanning graph with $(1+o(1))\Delta n/2$ edges has $\Delta n/2$ as a sharp threshold, provided its maximum degree $\Delta$ satisfies $\Delta=\omega(\log n)$. For certain types of Achlioptas processes, Krivelevich et al. \cite{krivelevich2010hamiltonicity} \footnote{The model in \cite{krivelevich2010hamiltonicity} samples
$k$ edges uniformly at random from the set of currently missing edges instead of the set of all edges (i.e. $\D$ slightly changes over time). However, this distinction does not change the existence of sharp thresholds in the regime of interest.} showed that the property of being Hamiltonian admits a sharp threshold. Both of these papers follow the same high-level approach:
\begin{enumerate}
	\item A naive lower bound $L$ is obtained from a standard analysis, such that any strategy needs at least approximately $L$ steps to succeed w.h.p..
	\item An explicit strategy $\cS$ is devised and shown to satisfy the desired property in approximately $L$ steps w.h.p. This second
	step establishes the existence of a sharp threshold, and essentially all the work is done here.
\end{enumerate}
As one might expect, there are certain limitations to such an explicit approach. First, it is not always the case that the naive lower bound is the right answer. For instance, in the semi-random graph process, Gao et al. \cite{gao2020hamilton,gao2021perfect, gao_fully_2022} established that the naive lower bound can be improved substantially when the property  corresponds to containing a Hamiltonian cycle \cite{gao2020hamilton, gao_fully_2022},
or to containing a perfect matching \cite{gao2021perfect}. Second, even if a sharp threshold does exist, it is not clear that it can be identified by a strategy with an explicit description. For example, in the semi-random graph process, Gao et al. introduce algorithms for constructing Hamiltonian cycles \cite{gao2020hamilton,gao_fully_2022}, and perfect matchings \cite{gao2021perfect}.  While each algorithm
satisfies the relevant property in number of steps close to the best known lower bound, the authors indicate that they do \textit{not} believe their algorithms are optimal. This greatly limits their usefulness in terms of proving the existence of sharp thresholds. In this paper, we circumvent these limitations by developing a general machinery which allows us to establish the existence of sharp thresholds in the $\D$-process \textit{implicitly}. That is, without explicitly identifying lower bounds or finding (asymptotically) optimal strategies. 


While we are unaware of any work applying this implicit approach to any ``truly adaptive'' random graph process,
it has been used when the player has no real control (i.e., $\scr{D}$ is supported on singletons). In his seminal paper, Friedgut \cite{friedgut1999sharp} proved the existence of sharp thresholds for ``global" properties in the Erdős–Rényi random graph process in an implicit way (the model he considered does not allow multi-edges, but this distinction is irrelevant in many regimes of interest). To do so, he identifies the Erdős–Rényi random graph with the product space measure on $\{0,1\}^{\binom{n}{2}}$, and applies Fourier analysis to the Boolean function indicating
whether or not the random graph satisfies the given property. 
It is not clear how such techniques can be generalized to the $\D$-process, as in general, the $\D$-process depends on the decisions of
the player, and so it cannot be obviously modelled by a product space measure.



\subsection{Main result}
Given an arbitrary distribution $\D$, Theorem \ref{stealing} provides a sufficient condition for when a monotone increasing property $\cP$ admits a sharp threshold in the $\D$-process. For any $\theta\in (0,1)$, define $\m_{\scr{P}}(\theta,n)$ to be the minimum $t \ge 1$,
such that there exists a strategy $\scr{S}'_n$ which 
satisfies $ \bP[T_{\scr{S}'_n}\le \m_{\cP}(\theta,n) ]\ge \theta$, and for every strategy $\scr{S}_n$,
$\bP[ T_{\scr{S}_n} \le \m_{\cP}(\theta,n) -1 ] < \theta.$
For convenience we let $\ms:=\ms(n):=\m_{\cP}(1/2,n)$, and define the sufficient condition used in Theorem \ref{stealing}:
\begin{definition}[Edge-Replaceable] \label{def:edge_replaceable}
We say that $\cP$ is \textbf{$\omega$-edge-replaceable} (or just \textbf{edge-replaceable}) 
if there exists ${\omega:=\omega(n)\to\infty}$ such that the following guarantee holds:
 For any $G\in \cP$, and $e\in E(G)$, if we begin the $\D$-process with graph $G_0=G-e$, then there exists a strategy for the player which constructs some $G' \in \scr{P}$ in $\sqrt{\ms}/\omega$ steps with probability at least $1 - o(1/\sqrt{\ms})$. We refer to this strategy as an \textbf{edge-replacement procedure} of $\scr{P}$. 

\end{definition}
We remark that for the properties we consider, $G'$ will typically be distinct from $G$.


\begin{thm}[Sharp threshold] \label{stealing}
	If $\cP$ is $\omega$-edge-replaceable, then for any constants $0<\theta_1<\theta_2<1$, we have that
	\begin{equation}\label{eq:sharptransition}
	\m_{\scr{P}}(\theta_2,n)-\m_{\scr{P}}(\theta_1,n)=O_{\theta_1,\theta_2}\left(\frac{\ms}{\omega}\right),
	\end{equation}
	where the implicit constant in the $O$ term depends on $\theta_1,\theta_2$. Thus, $\ms$ is a sharp threshold of $\cP$.
\end{thm}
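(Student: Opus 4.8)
The plan is to prove the key estimate \eqref{eq:sharptransition} via a ``probabilistic amplification'' or ``boosting'' argument, showing that a strategy succeeding with probability $\theta_1$ can, after only $O(\ms/\omega)$ additional steps, be upgraded to one succeeding with probability $\theta_2$; iterating a bounded number of times then closes the gap between any two constant probabilities. The final sentence of the theorem (that $\ms$ is a sharp threshold) follows quickly from \eqref{eq:sharptransition}: taking $\theta_1 = \eps'$ and $\theta_2 = 1 - \eps'$ for small constant $\eps'$, the definition of $\m_{\cP}(\theta,n)$ immediately gives the lower-bound clause (2) of the sharp threshold definition, since $\m_{\cP}(1-\eps', n) = \ms + O(\ms/\omega) = (1+o(1))\ms$ means no strategy can beat $(1-\eps)\ms$ w.h.p.; and the upper-bound clause (1) follows because a strategy achieving success probability $1-\eps'$ by time $(1+o(1))\ms$ can be run, and then — if it has not yet succeeded — followed by sufficiently many independent ``restarts'' of the edge-replacement procedure (or a direct repetition argument) to drive the failure probability to $o(1)$. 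The bulk of the work is \eqref{eq:sharptransition}.

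For \eqref{eq:sharptransition}, here is the core idea. Fix a strategy $\cS$ that succeeds with probability $\ge \theta_1$ by time $\m := \m_{\cP}(\theta_1,n)$. Run $\cS$ for $\m$ steps. With probability $\ge \theta_1$ we have $G_\m \in \cP$; the issue is the other (up to) $1-\theta_1$ fraction of the probability space. The natural move is: whenever $\cS$ succeeds at some step $t \le \m$ (i.e. $G_t \in \cP$ first at step $t$), ``freeze'' and then, using the remaining $\approx \ms/\omega$ steps, repeatedly apply the edge-replacement procedure. Each application of Definition \ref{def:edge_replaceable} takes $\sqrt{\ms}/\omega$ steps and, starting from $G - e$ for any $G \in \cP$ and $e \in E(G)$, rebuilds \emph{some} $G' \in \cP$ with probability $1 - o(1/\sqrt{\ms})$. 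Running $\sqrt{\ms}$ such procedures back-to-back uses $\ms/\omega$ steps, and by a union bound over the $\sqrt{\ms}$ procedures the probability that \emph{any} of them fails is $o(1)$. The point of chaining many copies is this: because we have $\sqrt{\ms}$ ``fresh'' chances, we can couple the run of $\cS$ with a run of a new strategy $\cS''$ so that $\cS''$ is allowed to ``wait'' — at each of the $\sqrt{\ms}$ checkpoints, if $\cS$ has reached $\cP$ by then, $\cS''$ uses one replacement block to maintain membership in $\cP$; if not, it keeps deferring. Since $\cS$ succeeds by time $\m$ with probability $\ge \theta_1$, and the union bound costs only $o(1)$, the new strategy $\cS''$ running for $\m + \ms/\omega$ steps satisfies $\cP$ with probability $\ge \theta_1 - o(1)$; but more importantly, we gain a genuine probability increment by exploiting any strategy that succeeds slightly later. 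The cleanest formulation: by definition of $\m_\cP(\theta_1 + \delta, n)$ for a small constant $\delta$, no strategy reaches $\cP$ with probability $\theta_1 + \delta$ by time $\m_\cP(\theta_1+\delta,n) - 1$, and we want to show $\m_\cP(\theta_1+\delta, n) \le \m + O(\ms/\omega)$. Suppose not; then on the event (probability $\ge \theta_1$) that $\cS$ has already succeeded by step $\m$, we start the replacement chain at step $\m+1$ and maintain $\cP$ through step $\m + c\ms/\omega$; meanwhile the remaining $\le 1-\theta_1$ mass has $\ge \delta - o(1)$ additional chances to enter $\cP$ during those extra steps via the ``still run $\cS$ or a fresh attempt'' sub-strategy, contradicting the extremality defining $\m_\cP(\theta_1 + \delta, n)$ — provided $\delta$ is chosen so that $\theta_1 + \delta \le \theta_2$. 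Iterating this increment $O_{\theta_1,\theta_2}(1)$ times (each time paying $O(\ms/\omega)$ steps and $o(1)$ in probability via the union bound, with the number of iterations a fixed constant so the total $o(1)$ loss remains $o(1)$) moves the success probability from $\theta_1$ past $\theta_2$, giving \eqref{eq:sharptransition}.

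I expect the main obstacle to be the bookkeeping that makes the above ``waiting/deferral'' coupling rigorous: one must carefully define the composite strategy $\cS''$ as a bona fide strategy in the sense of the model (a sequence of functions of the observed history), handle the fact that after a replacement block the graph is some \emph{other} $G' \in \cP$ rather than the original (so the next block starts from $G' - e'$ for a freshly deleted edge $e'$, which Definition \ref{def:edge_replaceable} does permit), and verify that the $o(1/\sqrt{\ms})$ failure probability per block is strong enough for a union bound over $\Theta(\sqrt{\ms})$ blocks — which is exactly why the definition is stated with that precise decay rate and with block length $\sqrt{\ms}/\omega$ rather than, say, $\ms/\omega$. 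A secondary technical point is arguing that during the extra $O(\ms/\omega)$ steps the ``unsuccessful'' part of the probability space truly gains a constant increment of success probability; this should follow because $\ms/\omega$ is large compared with the per-block length, so we are effectively giving a strategy $\Theta(\sqrt{\ms})$ independent-ish restarts, and one can appeal directly to the minimality in the definition of $\m_\cP$: if the increment were not achieved, one could splice together a strategy beating the definition of $\m_\cP(\theta_1+\delta,n)$. Making the constant $\delta = \delta(\theta_1,\theta_2)$ and the number of iterations explicit, and checking the iteration does not blow up the constant in the $O$-term beyond $O_{\theta_1,\theta_2}(1)$, is the final routine-but-delicate step.
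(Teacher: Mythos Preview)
Your proposal has a genuine gap at the heart of the argument. The edge-replacement procedure does not do the work you assign to it, and the ``increment on the unsuccessful part'' is asserted rather than proved.

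First, once $\cS$ succeeds at step $t\le \m$, the property $\cP$ is monotone, so $G_t\in\cP$ implies $G_{t'}\in\cP$ for all $t'\ge t$; there is nothing to ``maintain'', and chaining replacement blocks on the successful part of the space buys nothing. The entire content of the theorem lies in showing that the \emph{unsuccessful} $(1-\theta_1)$-mass can be pushed into $\cP$ using only $o(\ms)$ extra steps. Your justification for this is that during the extra $O(\ms/\omega)$ steps one can ``still run $\cS$ or a fresh attempt'' to gain $\delta$ probability, and that otherwise one could ``splice together a strategy beating the definition of $\m_\cP(\theta_1+\delta,n)$''. But this is circular: the minimality in the definition of $\m_\cP(\theta_1+\delta,n)$ gives a contradiction only if $\m + O(\ms/\omega) \ge \m_\cP(\theta_1+\delta,n)$, which is precisely the inequality you are trying to establish. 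Absent an independent reason why $o(\ms)$ extra steps yield a constant probability increment, the argument does not close.

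The paper supplies exactly this missing mechanism, and it is the main idea: consider the Doob martingale $M_j=\bE[f(X)\mid X_1,\dots,X_j]$ of the indicator $f$ that $\cS$ wins by step $N=\m(\theta,n)$. Since $f(X)\in\{0,1\}$ with mean bounded away from $0$ and $1$, $f(X)$ is \emph{not} concentrated; a new one-sided Azuma-type inequality (their Theorem~\ref{typical}) then forces, with probability $\Omega(1)$, the existence of a step $\tau$ at which swapping the presented subset $X_\tau$ for some other $W_\tau\in\supp(\D)$ would raise $M_\tau$ by at least $c=\Theta(1/\sqrt{\ms})$. This is exploited in a ``free-move'' game where the player may replace one presented subset; the edge-replacement procedure is then used \emph{once}, not repeatedly, to convert the free-move strategy back into a legal $\D$-process strategy at the cost of $\sqrt{\ms}/\omega$ extra steps and $o(1/\sqrt{\ms})$ failure probability (this is why the definition has exactly that block length and error rate). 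One application thus boosts the win probability by $\Theta(1/\sqrt{\ms})$ in $\sqrt{\ms}/\omega$ steps (Lemma~\ref{lem:smallexplosion}); iterating $\Theta(\sqrt{\ms})$ times gives a $\Theta(1)$ boost in $\ms/\omega$ steps. Your proposal bypasses this anti-concentration step entirely, and without it there is no source for the probability increment.
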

We prove Theorem \ref{stealing} by fixing an arbitrary strategy which succeeds with probability at least $\theta\in [\theta_1,\theta_2]$ in $\m_{\cP}(\theta,n)$ steps,
and identifying a strategy modification which has the \textit{potential} to increase the strategy's winning probability by $\Omega(1/\sqrt{\ms})$.
The proof relies on a martingale concentration inequality, whose full statement we defer to Section \ref{sec:approx_balanced}.
After performing this strategy modification, the final graph $G_0$ we are left with may be lacking an edge $e$ necessary to satisfy $\scr{P}$, 
however we can apply an augmentation via the edge-replacement procedure of $\scr{P}$ to $G_0$ to recover
a graph $G'$ which \textit{does} satisfy $\scr{P}$ in $\sqrt{\ms}/\omega$ steps with probability at least $1 - o(1/\sqrt{\ms})$. Thus,
we boost the win probability of $\scr{S}$ by $\Theta(1/\sqrt{\ms})$ in $\sqrt{\ms}/\omega$ steps.
By applying this procedure $\sqrt{\ms}$ times, we increase the original strategy's win probability by $\Theta(1)$. Since this only requires an extra $\Theta(\sqrt{\ms})\cdot \sqrt{\ms}/\omega=o(\ms)$ steps in total, we are able to establish the existence of a sharp threshold.

\subsection{Application: The Semi-Random Graph Process} \label{sec:application_semi_random}

The \textbf{semi-random graph process} was suggested by Peleg Michaeli, introduced formally in~\cite{ben2020semi}, and studied in~\cite{ben-eliezer_fast_2020,gao2020hamilton,ben-eliezer_fast_journal_2020,gao2021perfect,behague2021subgraph, gao_fully_2022,burova2022semi}. The process is a one player game in which
the player begins with the empty graph on $[n]$. In each step $t \ge 1$, the player is given a vertex $u_t$ drawn independently and uniformly at random (u.a.r.) from $[n]$, 
often referred to as a \textbf{square}. They then adaptively pick a vertex $v_t$ (called a \textbf{circle}), and add the edge $(u_t,v_t)$ to their current graph. 
Observe that if $\D$ is the uniform distribution over all spanning stars on $K_n$, then the $\D$-process encodes the semi-random graph process. 

\CM{To warm-up, we first consider the property $\scr{P}_k$ of attaining minimum degree $k \ge 1$ in the semi-random
graph process. In \cite{ben2020semi}, Ben-Eliezer et al. identified an explicit constant $h_k$ such that $h_k n$ is a sharp threshold for $\scr{P}_k$. Clearly, $\scr{P}_k$ is $\omega$-edge-replaceable with $\omega = \sqrt{n}/2$. Thus, applying \Cref{stealing} to $\scr{P}_k$ yields an alternative proof of the existence of a sharp threshold.}

Moving to our main applications, let $\cM$ be the property of containing a perfect matching (by perfect matching on an odd number of vertices, we mean a matching which saturates all but one vertex), and $\cH$ be the property of containing a Hamiltonian cycle. As an application of Theorem \ref{stealing} and the tools we develop in Section \ref{sec:proof_semi_random}, we prove the following sharp threshold result. This result answers two of the open problems proposed by Ben-Eliezer et al. in \cite{ben-eliezer_fast_journal_2020}
(the journal version of \cite{ben-eliezer_fast_2020}).
\begin{thm}\label{mainsemirandom}
	Let $\cP\in \{\cM, \cH\}$. In this case, if $\ms(n) :=\ms_{\cP}(n):= m_{\scr{P}}(1/2,n)$, then
	\begin{enumerate}
	    \item `Existence of a threshold': In the semi-random graph process, $\ms$ is a sharp threshold for $\scr{P}$ \label{thm:sharp_thresold_existence}
	    \item `Linear growth': There exists some constant $C_\cP > 0$, such that $\ms = (C_\cP  +o(1)) n$. \label{thm:sharp_threshold_linear}
	\end{enumerate}
\end{thm}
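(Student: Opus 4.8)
The plan is to split the proof into two parts, corresponding to the two items in the statement.

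For item \ref{thm:sharp_thresold_existence}, I would apply Theorem \ref{stealing}. The only thing to verify is that $\cM$ and $\cH$ are $\omega$-edge-replaceable for some $\omega = \omega(n) \to \infty$. Concretely, given any $G \in \cP$ and $e \in E(G)$, I start the semi-random process at $G_0 = G - e$ and must exhibit a strategy that produces some $G' \in \cP$ within $\sqrt{\ms}/\omega$ steps with probability $1 - o(1/\sqrt{\ms})$. Since $\ms = \Theta(n)$ (from item \ref{thm:sharp_threshold_linear}), we have $\sqrt{\ms} = \Theta(\sqrt n)$, so it suffices to find a ``local repair'' procedure that, after deleting one edge of a Hamiltonian cycle (resp.\ perfect matching), rebuilds the missing structure using only $\Theta(\sqrt n/\omega)$, i.e.\ slightly sub-$\sqrt n$, semi-random rounds — with failure probability $o(1/\sqrt n)$. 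For the perfect matching case: $G - e$ misses exactly the two endpoints $x, y$ of $e$; I wait for a square landing on $x$ (or $y$) and connect it to $y$ (or $x$), restoring a perfect matching in one successful round — but the failure probability of getting such a square in $O(\sqrt n/\omega)$ rounds is only $1 - \Theta(\sqrt n/\omega \cdot 1/n) = 1 - \Theta(1/(\omega\sqrt n))$, which is not $o(1/\sqrt n)$. So instead I will repair more cleverly: rather than insisting on connecting $x$ to $y$, build a short augmenting path — e.g.\ when a square $u_t$ arrives, if $u_t$ is matched to some $w$, delete that matching edge, match $u_t$ to $x$, and now the uncovered vertices are $\{w,y\}$; iterate. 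This is a random-walk-type argument on the set of ``current deficient pairs,'' and after $\Theta(\sqrt n/\omega)$ rounds the probability of not having closed up the matching should be pushed below $o(1/\sqrt n)$ using that each round has a constant probability of making progress once we allow many candidate moves. For $\cH$: deleting $e = (a,b)$ from a Hamiltonian cycle leaves a Hamiltonian path from $a$ to $b$; I need to re-close it. The rotation-extension technique of Pósa is the natural tool: each semi-random round gives an edge incident to a uniformly random square, which I use to perform a rotation, and after $\Theta(\sqrt n/\omega)$ rotations the set of endpoints reachable grows to polynomial size, giving a constant-or-better chance of closing the cycle; amplifying across $\Theta(\sqrt n/\omega)$ rounds drives the failure probability to $o(1/\sqrt n)$. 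The details here require care but are standard in spirit. The main obstacle in this part is getting the failure probability down to $o(1/\sqrt n)$ — a naive one-shot repair gives only $o(1)$, so one genuinely needs the repair procedure to have many independent chances to succeed, which is why the $\sqrt{\ms}/\omega$ budget (rather than $O(1)$) matters.

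For item \ref{thm:sharp_threshold_linear}, I need $\ms = (C_\cP + o(1))n$ for some constant $C_\cP > 0$. The lower bound $\ms = \Omega(n)$ is immediate: a perfect matching has $\lfloor n/2 \rfloor$ edges and a Hamiltonian cycle has $n$ edges, and each round adds at most one edge, so even with probability $1/2$ one needs $\ge (1/2 - o(1))n$ (resp.\ $(1-o(1))n$) rounds. The upper bound $\ms = O(n)$ follows from the existence of strategies in the literature (Gao et al.\ \cite{gao2020hamilton,gao2021perfect,gao_fully_2022}) that achieve $\cP$ w.h.p.\ — hence with probability $\ge 1/2$ — in $O(n)$ rounds; this bounds $\m_\cP(1/2,n) = O(n)$. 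So $\ms/n$ is bounded between two positive constants. To upgrade this to convergence to a limit $C_\cP$, I would use a subadditivity/fractional-relaxation argument: define $c_\cP := \liminf_{n} \ms(n)/n$ and show $\limsup_n \ms(n)/n \le c_\cP$. The idea is that an asymptotically optimal strategy on a large vertex set can be ``simulated'' on the union of several smaller blocks — or, more robustly, one shows $\ms$ is ``approximately subadditive'': $\ms(n_1 + n_2) \le \ms(n_1) + \ms(n_2) + o(n_1+n_2)$, by running near-optimal strategies on two disjoint vertex sets and arguing that squares landing in the wrong block cost only a lower-order number of wasted rounds. Combined with the two-sided linear bounds and a Fekete-type lemma, this yields $\lim_n \ms(n)/n = C_\cP \in (0,\infty)$. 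The main obstacle here is the approximate subadditivity: squares are global (uniform on all of $[n]$), so one must argue that the fraction of rounds ``wasted'' on the wrong block is $o(n)$, which needs the near-optimal strategies to be robust to a small density of adversarial/irrelevant rounds — plausibly true but requiring a short argument, perhaps by first passing to strategies that are monotone and ignore redundant squares.

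Overall, item \ref{thm:sharp_thresold_existence} is mostly an exercise in classical Hamiltonicity/matching repair technique (Pósa rotations, augmenting paths) tuned to the semi-random setting, while the genuinely new content is the machinery of Theorem \ref{stealing} already in hand; item \ref{thm:sharp_threshold_linear} needs a soft analytic limit argument whose crux is approximate subadditivity under the global-square model.
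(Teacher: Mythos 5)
There is a genuine gap in your handling of item \ref{thm:sharp_thresold_existence}: you attempt to show that $\cM$ and $\cH$ are themselves $\omega$-edge-replaceable, but they are not, and the paper says so explicitly. The obstruction is quantitative. Take $G$ to be a bare perfect matching and remove one edge $e=(x,y)$; then $G_0=G-e$ has exactly two unsaturated vertices and no surplus edges. Your augmenting-path/P\'osa-rotation repair grows the set of candidate ``exposed endpoints'' by at most one per round, so after $t$ rounds the set has size at most $t$ and the per-round probability of closing the augmenting structure is at most $t/n$. Over $t=\sqrt{\ms}/\omega = \Theta(\sqrt{n}/\omega)$ rounds the cumulative closure probability is $\Theta(t^2/n)=\Theta(1/\omega^2)=o(1)$, so the repair \emph{fails} with probability $1-o(1)$, far from the required success probability $1-o(1/\sqrt{\ms})$. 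The same arithmetic kills the P\'osa-rotation repair for $\cH$. To get failure probability $o(1/\sqrt{n})$ via such a growth argument you would need roughly $\sqrt{n\log n}$ rounds, which exceeds the budget $\sqrt{\ms}/\omega$ by an unbounded factor, and this cannot be repaired by a more careful analysis of the same strategy.

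The paper circumvents this by replacing $\cM$ and $\cH$ with the \emph{approximate} properties $\cM'$ (a matching saturating $n-n^{0.99}$ vertices) and $\cH'$ (a path of length $n-n^{0.99}$). These \emph{are} edge-replaceable precisely because the repair procedure starts with $\Theta(n^{0.99})$ pre-existing ``target'' vertices, so a square hits a useful vertex in $n^{0.02}$ rounds with failure probability $e^{-\Theta(n^{0.01})}=o(1/\sqrt{n})$ -- the large pre-existing target set is the crucial ingredient your direct repair lacks. Theorem \ref{stealing} then gives a sharp threshold for $\cM'$ and $\cH'$, and the clean-up algorithms of Gao et al.\ (Lemmas \ref{lem:clean_up_matching_long}, \ref{lem:clean_up_hamiltonian_long}) are used via Lemma \ref{transferapprox} to transfer that threshold to $\cM$ and $\cH$; your write-up does not mention this transfer step at all. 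For item \ref{thm:sharp_threshold_linear} your Fekete-type plan is in the right spirit, but the paper works with the optimal \emph{expected} hitting time $I_n=\min_{\cS}\bE T_{\cS}$ rather than $\ms$ directly, proves a max-form rather than sum-form inequality (Lemma \ref{feketetype}), needs a bespoke Fekete variant (Lemma \ref{newtechnical}), and must additionally establish concentration of $T_{\cS}$ for the optimal strategy (Lemma \ref{concstrat}) to link $I_n$ back to $\ms$; these pieces are nontrivial and are absent from your sketch.
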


There are a few notable complications in proving Theorem \ref{mainsemirandom}. First, it turns out that the condition in Theorem \ref{stealing} does \textit{not} hold for either $\cM$ or $\cH$. However, for each of $\cM$ and $\cH$, we can define an \textit{approximate} property that does satisfy the required conditions, and thus admits
a sharp threshold. Since each approximate property is closely related to $\cM$ and $\cH$, we are able to argue that $\cM$ and $\cH$ also have sharp thresholds. Relating each approximate property with its ``full'' property relies on the ``clean-up'' algorithms of Gao et al. \cite{gao2021perfect,gao_fully_2022}. When $\cP$ is $\cM$, this clean-up algorithm allows one to extend a large matching to a perfect matching in a sublinear number of steps. When $\cP$ is $\cH$, the clean-up algorithm has a similar guarantee.

We emphasize that it is easy to establish $m^*=\Theta(n)$ for both properties $\cM$ and $\cH$. Hence the first part of Theorem~\ref{mainsemirandom} implies the existence of a function $C_{\cP}(n)=\Theta(1)$ such that $m^*=(C_{\cP}(n)+o(1))n$. The second part of the theorem shows that $\lim_{n\to\infty} C_{\cP}(n)$ exists. Showing the existence of such limit is non-trivial. Indeed, showing the limit of such function is the central question in many topics \cite{bayati2010combinatorial, ding2015proof}. We achieved this by considering the optimal strategy $\cS_n$ that minimizes $\bE \tsn=:I_n$, then showing that $I_n$ satisfies a certain set of inequalities (see \Cref{lem:fekete_semi}). We then use a purely analytic argument to show the existence of the limit $\lim_{n\to\infty} I_n/n$ (\Cref{lem:fekete_technical}), which quickly leads to the desired result. This is reminiscent of the approach seen in \cite{bayati2010combinatorial}, where a more standard subadditive inequality is combined with the brilliant use of interpolation to prove the existence of scaling limit for the size of independent sets in sparse random graphs. In our setting,
it is much easier to show that the required inequalities hold by standard ``strategy stealing'' arguments.

We conclude with the observation that a sharp threshold does not always exist for the semi-random process, which follows as a byproduct of the result in \cite{behague2021subgraph}.
\begin{thm}\label{coarsesimplified}
		Let $L$ be a fixed (finite) list of fixed graphs, none of which are forests.  Then in the semi-random graph process the property of containing a graph in $L$ does not admit a sharp threshold. 
\end{thm}

The rest of the paper is organized as follows. In Section 2 we prove Theorem~\ref{stealing}. In Section~3 we establish the martingale concentration inequality needed to prove Theorem~\ref{stealing}. In Section~4 we prove Theorems~\ref{mainsemirandom} and~\ref{coarsesimplified}. Section~5 list some final remarks.
	

\section{Proving Theorem \ref{stealing}} \label{sec:main}
	

Suppose that $\scr{P}$ is an edge-replaceable property with respect to $\omega \rightarrow \infty$ (see Definition \ref{def:edge_replaceable})
in some arbitrary $\scr{D}$-process. Moreover, take $0<\theta_1<\theta_2<1$. We wish to show
that if we are given a strategy which wins after $\m(\theta_1,n)$ steps with probability at least $\theta_1$,
then we can augment the strategy to boost its win probability to $\theta_2$ in $O\left(\ms/\omega\right)$ additional steps.
If we can prove this, then it will imply that  $\m \left( \theta_2,n \right)-\m(\theta_1,n) = O\left(\ms /\omega\right)$.
Now, suppose that we have boosted to a win probability of $\theta \in [\theta_1, \theta_2]$, and 
$\theta^* :=  \min(\theta+\theta(1-\theta)^3/32,\theta_2)$ is the next \textit{target} probability we wish to
boost to. We claim that this increase is attainable in an appropriate number of steps. That is,
for each $\theta \in [\theta_1, \theta_2]$,
\begin{equation} \label{eqn:large_explosion_overview}
\m \left( \theta^*,n \right)-\m(\theta,n) \le \ms/\omega.
\end{equation}
By beginning with $\theta = \theta_1$,
and iterating \eqref{eqn:large_explosion_overview} a constant number of times, \Cref{stealing} follows
(see the proof of \Cref{stealing} below for the details). 
\subsubsection{Reducing \eqref{eqn:large_explosion_overview} to Small Boosts}
Instead of trying to directly describe a strategy which implies \eqref{eqn:large_explosion_overview}, we first prove
that we can boost the winning probability by $\Theta(1/\sqrt{\ms})$ in $O(\sqrt{\ms}/\omega)$ extra steps. 
More precisely, if a strategy wins with probability $\theta$ after $\m(\theta,n)$ steps,
then we can augment the strategy such that its winning probability is $\theta+\frac{\theta(1-\theta)^3}{4\sqrt{\ms}}$
after $\sqrt{\ms}/\omega$ additional steps. This is the content of Lemma \ref{lem:smallexplosion}:
\begin{lemma}[Small Boost] \label{lem:smallexplosion}
Given constants $0<\theta_1<\theta_2<1$, for any sufficiently large $n\ge 1$ (depending only on $\theta_1,\theta_2$) and any $\theta\in [\theta_1,\theta_2]$, we have that
\[
    \m\lrpar{\theta+\frac{\theta(1-\theta)^3}{4\sqrt{\ms}},n}-\m(\theta,n) \le \sqrt{\ms}/\omega.
    \]
\end{lemma}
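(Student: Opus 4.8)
The plan is to fix a strategy $\cS$ which wins with probability $\theta$ after $\m := \m(\theta,n)$ steps, and to describe an augmented strategy $\cS'$ that runs for $\m + \sqrt{\ms}/\omega$ steps and wins with probability at least $\theta + \frac{\theta(1-\theta)^3}{4\sqrt{\ms}}$. The augmentation should be of the following "stealing"/strategy-stealing flavor: run $\cS$ for the first $\m$ steps to obtain $G_\m$; if $G_\m \in \cP$ we are already done, so the interesting case is $G_\m \notin \cP$. The idea is that among the runs where $\cS$ *just barely* failed — i.e. $G_\m \notin \cP$ but $G_\m$ is close to $\cP$ in the sense that adding a single edge $e$ would put it in $\cP$ — we can use the extra $\sqrt{\ms}/\omega$ steps together with the edge-replaceable property to actually win. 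More precisely, if $G_\m + e \in \cP$ for some missing edge $e$, then $G_\m = (G_\m + e) - e$ is of the form "a graph in $\cP$ minus an edge", so by Definition~\ref{def:edge_replaceable} there is an edge-replacement procedure that, starting from $G_\m$, reaches some $G' \in \cP$ within $\sqrt{\ms}/\omega$ steps with probability $1 - o(1/\sqrt{\ms})$.

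So the augmented strategy is: run $\cS$ for $\m$ steps; if $G_\m \in \cP$, stop (win); else, if $G_\m$ is one edge away from $\cP$, run the edge-replacement procedure for $\sqrt{\ms}/\omega$ more steps; otherwise do anything. The gain in win probability is then at least $(1 - o(1/\sqrt{\ms})) \cdot \bP[\,G_\m \notin \cP \text{ but one edge from } \cP\,]$, so it suffices to show this "boundary" event has probability $\Omega(1/\sqrt{\ms})$ — in fact at least roughly $\frac{\theta(1-\theta)^3}{4\sqrt{\ms}} \cdot (1+o(1))$. This is where the main work lies, and where the martingale concentration inequality advertised in the introduction enters. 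The natural quantity to track is something like $D_t := $ (minimum number of edge-additions needed to drive $G_t$ into $\cP$), or a closely related "distance to $\cP$" functional, under the optimal continuation of $\cS$. One wants to argue that conditioned on not having won by step $\m$, with probability $\Omega(1/\sqrt{\ms})$ we have $D_\m = 1$ (the graph is exactly one edge short). The heuristic is a random-walk/diffusion picture: $\m$ is defined so that $\bP[T_\cS \le \m] = \theta$ but $\bP[T_\cS \le \m - 1] < \theta$, so the process $(D_t)$ is, in a suitable sense, "just crossing" the boundary at time $\m$; a square-root-law anticoncentration estimate for the martingale part of $D_t$ then forces the event $\{D_\m = 1\}$ to carry at least $\Omega(1/\sqrt{\m}) = \Omega(1/\sqrt{\ms})$ of the probability mass near the boundary, and the polynomial factor $\theta(1-\theta)^3/4$ comes from bookkeeping over which $\theta$-fraction of runs are the relevant near-misses.

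The key steps, in order, are: (i) formalize the "distance to $\cP$" functional $D_t$ and show it behaves like a supermartingale (under an appropriate optimal completion of the strategy) with bounded increments, so that the deferred martingale inequality of Section~\ref{sec:approx_balanced} applies; (ii) use the defining near-tightness $\bP[T_\cS \le \m-1] < \theta \le \bP[T_\cS \le \m]$ to locate the process at the boundary $D = 0$ vs $D = 1$ at time $\m$ with the right amount of mass, extracting $\bP[D_\m = 1,\ T_\cS > \m] \ge \frac{\theta(1-\theta)^3}{4\sqrt{\ms}}(1+o(1))$ via the anticoncentration estimate; (iii) on the event $\{D_\m = 1\}$, identify the missing edge $e$ with $G_\m + e \in \cP$, invoke edge-replaceability on $G_\m = (G_\m+e) - e$ to reach $\cP$ within $\sqrt{\ms}/\omega$ steps with probability $1 - o(1/\sqrt{\ms})$, and note this failure probability is negligible against the $\Theta(1/\sqrt{\ms})$ gain; (iv) conclude that the augmented strategy wins with probability at least $\theta + \frac{\theta(1-\theta)^3}{4\sqrt{\ms}}$ in $\m + \sqrt{\ms}/\omega$ steps, which by the definition of $\m(\cdot,n)$ gives exactly $\m(\theta + \frac{\theta(1-\theta)^3}{4\sqrt{\ms}},n) - \m(\theta,n) \le \sqrt{\ms}/\omega$.

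The main obstacle I anticipate is step (ii): proving the anticoncentration statement that the near-miss event $\{D_\m = 1,\ T_\cS > \m\}$ has probability $\Omega(1/\sqrt{\ms})$. The difficulty is that $\cP$ and $\D$ are arbitrary, so $D_t$ need not be an honest martingale, its increment distribution is not explicit, and there is no a priori lower bound on its variance at the relevant scale — all one knows is the black-box near-tightness at the threshold $\m$. Bridging from "$\bP[T_\cS \le \m] - \bP[T_\cS \le \m - 1]$ is small" to "$\bP[D_\m = 1]$ is at least $\Omega(1/\sqrt{\ms})$" is precisely what the new martingale concentration inequality is designed to do, so the real content is setting up $D_t$ so that its hypotheses (bounded increments, the appropriate drift/variance control, and the "balanced"-type condition hinted at by the name of Section~\ref{sec:approx_balanced}) are verified. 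Handling the degenerate case where $D_t$ can jump by more than $1$ in a single step (if a single chosen edge over-shoots into $\cP$) and making sure the factor $(1-\theta)^3$ rather than just $(1-\theta)$ survives — this power presumably comes from iterating a one-step boost $\theta \to \theta + \Theta(\theta(1-\theta)/\sqrt{\ms})$ and absorbing loss terms — will be the fiddly part of the bookkeeping.
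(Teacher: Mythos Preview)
Your plan diverges from the paper's at a fundamental level, and the obstacle you flag in step (ii) is not what the paper's martingale inequality handles. The paper never tracks the graph-distance $D_t$; it works with the Doob martingale $M_j = \bP[\cS \text{ wins} \mid X_1,\ldots,X_j]$ of the $\{0,1\}$-valued win indicator $f(X)$. The augmented strategy intervenes \emph{mid-process}: it follows $\cS$ until the first step $\tau$ at which some alternative subset $W_\tau \in \supp(\D)$ would raise $M_\tau$ by at least $c = \Theta(1/\sqrt{\ms})$, and then pretends $W_\tau$ arrived instead of $X_\tau$ (a ``free move''). Simulating this in the ordinary process leaves the final graph missing exactly the one edge selected from $W_\tau$, and edge-replaceability repairs that in $\sqrt{\ms}/\omega$ extra steps --- so edge-replaceability is used to undo a single input swap at an arbitrary step, not to finish off a graph that happens to be one edge short at time $\m$. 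The inequality of Theorem~\ref{typical} is a one-sided \emph{concentration} bound applied in the contrapositive: on $\{\tau > N\}$ every increment of $M$ is at most $c$, so $M_N$ would concentrate near $M_0 = \mu$; but $M_N \in \{0,1\}$ with $\Pr[M_N=0]=1-\mu$ bounded away from zero, forcing $\Pr[\tau \le N] \ge (1-\mu)/2$. Multiplying by $c$ and using $N \le C(\theta)\ms$ with $C(\theta) \le (1-\theta)^{-2}$ gives the $\theta(1-\theta)^3/(4\sqrt{\ms})$ boost directly --- the cube is bookkeeping from these three bounds, not iteration of a smaller boost.

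Your route would require $\Pr[D_\m = 1] = \Omega(1/\sqrt{\ms})$, and nothing in the paper establishes or needs this. The near-tightness $\Pr[T_\cS \le \m-1] < \theta \le \Pr[T_\cS \le \m]$ only yields $\Pr[D_{\m-1}=1,\,D_\m=0] > 0$; it says nothing about the losing runs, which for an arbitrary $\cS$ could all have $D_\m \ge 2$. Moreover $D_t$ is deterministically non-increasing with increments in $\{-1,0\}$ --- a degenerate supermartingale to which square-root anticoncentration does not naturally apply (your worry about jumps of size $>1$ is unfounded, but for the same reason there is no variance to exploit). The robust object is $M_j$: a genuine $[0,1]$-valued martingale forced to terminate in $\{0,1\}$, and that built-in non-concentration of the terminal value is exactly what drives the argument.
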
 
Let us assume that Lemma \ref{lem:smallexplosion} holds for now.
We can then prove \eqref{eqn:large_explosion_overview} by iteratively applying
\Cref{lem:smallexplosion} $\sqrt{\ms}$ times to increase the win probability from $\theta$ to $\theta^*$ in $\sqrt{\ms} \cdot \sqrt{\ms}/\omega = \ms/\omega$ additional steps. We include the details below, and complete the proof of \Cref{stealing}.


\begin{proof}[Proof of Theorem \ref{stealing}]
Let us take $n \ge 1$ sufficiently large (as in Lemma \ref{lem:smallexplosion}) and $\theta\in [\theta_1,\theta_2]$.
Recall that $\theta^* := \min(\theta+\theta(1-\theta)^3/32,\theta_2)$, and we first must show
that \eqref{eqn:large_explosion_overview} holds. I.e., 
\begin{equation} \label{eqn:large_explosion}
\m \left( \min(\theta+\theta(1-\theta)^3/32,\theta_2) ,n \right)-\m(\theta,n) \le \ms/\omega.
\end{equation}
In order to prove this, we iterate Lemma \ref{lem:smallexplosion} $\sqrt{\ms}$ times.
Formally, we define $\gamma_0 :=\theta$, and $\gamma_{i+1}:=\gamma_i+{\gamma_i(1-\gamma_i)^3}/(4\sqrt{\ms})$.
Observe then that by Lemma \ref{lem:smallexplosion}, for each $i\ge 0$, with $\gamma_i\le \theta_2$,
\[
\m(\gamma_{i+1},n)-\m(\gamma_i,n) \le \sqrt{\ms}/\omega.
\]
In particular, if $\gamma_i\le \min(\theta+\theta(1-\theta)^3, \theta_2)\le (1+\theta)/2$, then 
\[ \gamma_{i+1}-\gamma_i\ge \frac{\theta( 1- (1+\theta)/2)^3}{4\sqrt{\ms}}= \frac{\theta(1-\theta)^3}{32\sqrt{\ms}}.\]
Therefore
\[\gamma_{\sqrt{\ms}}\ge \min\left(\theta_2,\theta+\frac{\theta(1-\theta)^3}{32}\right),\]
and so \eqref{eqn:large_explosion} holds. By iterating \eqref{eqn:large_explosion} 
a constant number of times in a similar manner, Theorem \ref{stealing} follows.
\end{proof}

\subsection{Proving Lemma \ref{lem:smallexplosion}}
In this section, we explain the main tools used in the proof of Lemma \ref{lem:smallexplosion}. Fix $\theta \in [\theta_1, \theta_2]$, and set $N:=\m(\theta,n)$ for convenience. Let us suppose that $\scr{S}$ is a strategy which satisfies $\scr{P}$ with probability at least $\theta$ after $N$ steps. 
	First notice that we can assume that $\scr{S}$ is \textit{deterministic} without loss of generality. This is because the optimal strategy for winning in at most $N$ steps deterministically chooses $Y_{i} \in X_{i}$ so as to maximize its win probability (conditional on the current history $X_1, \ldots ,X_{i}$). 
	Since $\scr{S}$ is deterministic, there exists an \textbf{indicator function} $f$ of $\scr{S}$, where $f(X):=1$ if strategy $\scr{S}$ wins when presented the
	edge subsets of $X := (X_1,\dots, X_N)$ in order. To prove Lemma \ref{lem:smallexplosion},
	we augment $\scr{S}$ to get another strategy $\scr{S}'$ which wins with probability at least $\theta^* :=\theta+\frac{\theta(1-\theta)^3}{4\sqrt{\ms}}$
	after $\m(\theta,n) + \sqrt{\ms}/\omega$ steps.


We now give an informal overview of the three main parts to the proof of Lemma \ref{lem:smallexplosion}. The full details appear in the appropriate sections.

\begin{enumerate}
\item `Reducing to the free-move $\scr{D}$-process': We introduce a new game which gives slightly more power to the player
called the \textbf{free-move $\D$-process}. The free-move $\D$-process is played in the same
way as the $\D$-process, except that the player has one opportunity to pick the subset they desire instead of the subset they received (and
then select an edge from this subset).
Since $\scr{P}$ is edge-replaceable, the win probability of any free-move strategy can be matched by a (regular) strategy,
provided the regular strategy is given an additional $\sqrt{\ms}/\omega$ steps (see Lemma \ref{lem:free_move_coupling_short}). Thus it suffices to define a free-move strategy $\scr{F}$ which wins with probability at least $\theta^*$
after $\m(\theta,n)$ steps. 

\item `Defining \ref{alg:free_move_potential}': The free-move strategy \ref{alg:free_move_potential} 
analyzes the Doob-martingale $M=(M_j)_{j=0}^N$ of $f(X)$ with respect to $(X_j)_{j=1}^N$.
Informally, $M_j$ measures the probability that $\scr{S}$ will win, given the first $j$ arriving edge subsets $X_1, \ldots , X_j$.
Based on this interpretation, \ref{alg:free_move_potential} follows the strategy of $\scr{S}$ up until the first time $\tau \ge 1$ 
that there is \textit{potential} to increase its win probability. 
\ES{In particular, there is an edge subset $W_\tau$ such that replacing the edge set $X_\tau$ with $W_\tau$ increases the probability that $\scr{S}$ will win by at least $c$.}
At this point, it invokes its free-move
to swap $X_{\tau}$ with $W_{\tau}$, and then follows the strategy $\scr{S}$ as if $X_1, \ldots , W_{\tau}$
where the first $\tau$ subsets to arrive. Conditional on $\tau \le N$, this guarantees that \ref{alg:free_move_potential} has
a win probability at least $c$ greater than $\scr{S}$.

\item `Bounding the win probability of \ref{alg:free_move_potential}': 
In order to prove that \ref{alg:free_move_potential} attains a win probability significantly
better than $\scr{S}$, we must prove that $\mb{P}[\tau \le N] = \Omega(1)$.
We do so by proving a martingale concentration result (Theorem \ref{typical}),
and then applying it in a non-standard way. Observe that the function $f$ is $\{0,1\}$-valued, and so since
$0 < \theta < 1$, $f(X)$ \textit{cannot} be concentrated about $\mb{E}[f(X)]$. On the other hand,
we argue that if $\mb{P}[\tau \le N] = o(1)$, then Theorem \ref{typical} would force
$f(X)$ to be concentrated. Thus, we can conclude that $\mb{P}[ \tau \le N] = \Omega(1)$. 

\end{enumerate}

\subsubsection{The Free-Move $\scr{D}$-Process}	\label{sec:free_move}
The \textbf{free-move $\D$-process} is defined in the same
way as the $\D$-process, except that the player can adaptively choose a time $\tau \ge 1$, such that if $X_1, \ldots , X_{\tau}$
were the previously presented subsets of edges,
then they can choose an arbitrary subset $W_{\tau}$
from $\supp(\D)$ (the support of $\D$). They then
get to add an edge $Y_{\tau} \in W_{\tau}$ to $G_{\tau-1}$, opposed to an edge
from $X_{\tau}$ (as in the standard game).

Clearly, any strategy for the standard $\D$-process is a strategy
for the free-move $\D$-process. Thus, satisfying an edge-monotone property $\scr{P}$
in the latter game is no harder than in the former game. However, if $\scr{P}$ is edge-replaceable then the advantage gained by the player is
not very significant, and so this new game is a good approximation of the original game. We extend all the definitions from the standard $\D$-process to formalize
this intuition. Specifically, if $\scr{F}$ is a strategy for the free-move $\D$-process,
then $G^{\scr{F}}_t$ is the graph constructed by following $\scr{F}$ in the first $t$ steps. Moreover, $T_{\scr{F}}$ is defined to be the first $t \ge 1$ such that $G^{\scr{F}}_t \in \scr{P}$
(where $T_{\scr{F}} := \infty$ if no such $t$ exists.)

\begin{lemma} \label{lem:free_move_coupling_short}
Let $\scr{F}$ be a strategy for the free-move $\D$-process process
for satisfying a property $\scr{P}$ which is $\omega$-edge-replaceable.
In this case, there exists a strategy $\scr{F}'$ for the (standard) $\D$-process, 
such that for each $k \ge 1$,
    $\mb{P}[ G^{\scr{F}'}_{k + \sqrt{\ms}/\omega} \in \scr{P}] \ge \left(1 - o(1/\sqrt{\ms}) \right) \cdot \mb{P}[ G^{\scr{F}}_k \in \scr{P}]$.
\end{lemma}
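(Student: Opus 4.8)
The plan is to couple the free-move strategy $\scr{F}$ with a standard strategy $\scr{F}'$, so that $\scr{F}'$ mimics $\scr{F}$ exactly except that it must ``earn'' the free move by spending the extra $\sqrt{\ms}/\omega$ steps. First I would have $\scr{F}'$ run $\scr{F}$ step-by-step in lockstep, feeding $\scr{F}$ the same edge subsets $X_1, X_2, \ldots$ that $\scr{F}'$ receives and making the same choices, up until the (adaptively determined) time $\tau$ at which $\scr{F}$ invokes its free move to replace $X_\tau$ with some desired subset $W_\tau \in \supp(\D)$ and then adds an edge $Y_\tau \in W_\tau$. The key point is that the graph $G^{\scr{F}}_{\tau-1} = G^{\scr{F}'}_{\tau-1}$ at this moment, and $\scr{F}$'s next move is to add $Y_\tau$. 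Equivalently, $\scr{F}$ from step $\tau$ onward behaves as a standard $\D$-strategy started from the graph $G^{\scr{F}'}_{\tau-1} + Y_\tau$; call this auxiliary graph $H$. So $\scr{F}'$ needs to reach a graph from which it can continue as $\scr{F}$ would — i.e., it needs a graph containing (a copy of) the edge $Y_\tau$ on top of $G^{\scr{F}'}_{\tau-1}$.

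The role of edge-replaceability is exactly to supply this missing edge. At time $\tau$, the strategy $\scr{F}'$ has the graph $G_0 := G^{\scr{F}'}_{\tau-1}$, and it wants to build the graph $H = G_0 + Y_\tau$ (equal to $(H) - Y_\tau$ plus the edge $Y_\tau$), in a sublinear number of steps, so that it can then resume simulating $\scr{F}$. However, the edge-replacement procedure as stated in Definition \ref{def:edge_replaceable} only applies when the current graph is of the form $G - e$ for some $G \in \scr{P}$; it does not directly let us add an arbitrary prescribed edge to an arbitrary graph. The cleanest way around this: we only invoke the coupling argument at the moment $\scr{F}$ actually completes the property. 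Concretely, I would argue it suffices to consider the event $\{G^{\scr{F}}_k \in \scr{P}\}$, and on this event let $t^* \le k$ be the first time $G^{\scr{F}}_{t^*} \in \scr{P}$. If $\scr{F}$ never uses its free move before time $t^*$, then $\scr{F}'$ can simply copy $\scr{F}$ for the first $k$ steps and idle for the remaining $\sqrt{\ms}/\omega$ steps, trivially succeeding. Otherwise, $\scr{F}$ uses its free move at some time $\tau \le t^* \le k$; here is where we pay. Before time $\tau$, $\scr{F}'$ copies $\scr{F}$ exactly (this is legitimate, since up to time $\tau$ the free-move player has made only standard moves), arriving at $G_0 = G^{\scr{F}}_{\tau - 1}$. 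Now $\scr{F}'$ has the subsequent subsets $X_\tau, X_{\tau+1}, \ldots$ to come; it needs to produce, in a few steps, a graph from which the rest of $\scr{F}$'s play (which is a standard play from $G_0 + Y_\tau$) can be resumed.

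The way to make this rigorous and clean is to delay: have $\scr{F}'$ copy $\scr{F}$ for all $k$ steps, treating the free-move step $\tau$ as follows — at step $\tau$, instead of the subset $W_\tau$ that $\scr{F}$ chose, $\scr{F}'$ receives the actual subset $X_\tau$, picks an arbitrary edge from it to add (this is the one ``wasted'' real step, and we also remember $Y_\tau$ as the edge $\scr{F}$ added), and from step $\tau + 1$ onward continues to simulate $\scr{F}$'s choices verbatim. The resulting graph after $k$ steps is then $G^{\scr{F}}_k$ with one extra (garbage) edge added at step $\tau$ and the edge $Y_\tau$ missing. So after $k$ steps $\scr{F}'$ has a graph $G^{\text{end}}$ which, on the event $\{G^{\scr{F}}_k \in \scr{P}\}$, satisfies $G^{\text{end}} + Y_\tau \supseteq G^{\scr{F}}_k \in \scr{P}$, hence $G^{\text{end}} + Y_\tau \in \scr{P}$ by edge-monotonicity; equivalently $G^{\text{end}} = G'' - Y_\tau$ for some $G'' \in \scr{P}$. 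Now apply the edge-replacement procedure of $\scr{P}$ to $G^{\text{end}}$ with the edge $Y_\tau$: by Definition \ref{def:edge_replaceable}, starting the $\D$-process from $G^{\text{end}} = G'' - Y_\tau$, there is a strategy constructing some $G' \in \scr{P}$ within $\sqrt{\ms}/\omega$ further steps with probability $1 - o(1/\sqrt{\ms})$. Concatenating: $\scr{F}'$ runs for $k$ steps copying $\scr{F}$ (with the one garbage substitution at step $\tau$), then runs the edge-replacement procedure for $\sqrt{\ms}/\omega$ steps; the total is $k + \sqrt{\ms}/\omega$ steps, and $\mb{P}[G^{\scr{F}'}_{k + \sqrt{\ms}/\omega} \in \scr{P}] \ge (1 - o(1/\sqrt{\ms})) \cdot \mb{P}[G^{\scr{F}}_k \in \scr{P}]$, since on the good event for $\scr{F}$ we additionally need only the edge-replacement procedure to succeed, which it does with conditional probability $1 - o(1/\sqrt{\ms})$.

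The main obstacle is the bookkeeping around the free move: making sure that (i) $\scr{F}$ uses its free move at most once, so only one real step is wasted, and that this is harmless since the property is monotone; (ii) the edge-replacement procedure, which is defined for a graph of the form $G - e$ with $G \in \scr{P}$, genuinely applies to $G^{\text{end}}$ — this is why we must run $\scr{F}'$ all the way to step $k$ and condition on $\scr{F}$ having already won, rather than trying to ``fix up'' the graph in the middle of the process; and (iii) the $o(1/\sqrt{\ms})$ failure probability of the edge-replacement procedure is uniform over the (random) graph $G^{\text{end}}$ and the edge $Y_\tau$, which is exactly the content of Definition \ref{def:edge_replaceable} (the guarantee holds for \emph{any} $G \in \scr{P}$ and \emph{any} $e \in E(G)$). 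Given these points, the multiplicative bound follows by a union bound over the ``$\scr{F}$ wins'' event and the ``edge-replacement fails'' event.
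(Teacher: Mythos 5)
Your proof is correct and takes essentially the same approach as the paper: simulate $\scr{F}$ while substituting an arbitrary edge of $X_\tau$ at the free-move step, observe that once $\scr{F}$ has won the simulated graph is of the form $G-e$ for some $G\in\scr{P}$ (with $e=Y_\tau$), and then run the edge-replacement procedure to recover a graph in $\scr{P}$ with probability $1-o(1/\sqrt{\ms})$, which gives the multiplicative bound by conditioning. The one cosmetic difference is that you start the edge-replacement at the fixed time $k$, making your $\scr{F}'$ depend on $k$, whereas the paper starts it at the random stopping time $T_{\scr{F}}$ so that a single $k$-independent $\scr{F}'$ satisfies the lemma's ``there exists $\scr{F}'$ such that for each $k$'' quantifier order; this is immediate to fix and does not affect the application.
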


\begin{proof}[Proof of Lemma \ref{lem:free_move_coupling_short}]
Let us assume that $\scr{P}$ is $\omega$-edge-replaceable,
and $\scr{F}$ is a strategy for the free-move $\D$-process process. In order to prove
the lemma, it suffices to show that there exists a strategy $\scr{F}'$ for the (standard) $\D$-process, such that if both strategies are presented the same (random) edge subsets $(X_t)_{t=1}^{\infty}$, then with probability $1 - o(1/\sqrt{\ms})$ we have that
\begin{equation}\label{eqn:coupling_edge_replacement}
    T_{\scr{F}'} \le T_{\scr{F}} + \sqrt{\ms}/\omega.
\end{equation}
We begin by defining $\scr{F}'$ to follow the same decisions of $\scr{F}$ up until time 
$T_{\scr{F}}$, where if $\scr{F}$ invokes a free-move at some time $1 \le \tau \le T_{\scr{F}}$, 
then we define $\scr{F}'$ to choose an edge of $X_{\tau}$ arbitrarily. If $\scr{F}$ 
does not invoke a free-move, then $\tau := \infty$, and the strategies execute identically.

Let $G_{T_{\scr{F}}}$ and $G'_{T_{\scr{F}}}$ be the graphs constructed by $\scr{F}$ and $\scr{F}'$ after $T_{\scr{F}}$ steps, respectively. At this point, $G_{T_{\scr{F}}} \in \scr{P}$ (by definition of $T_{\scr{F}}$), yet $G'_{T_{\scr{F}}}$ may not satisfy $\scr{P}$. 
Specifically,
if $\tau < \infty$, then $G'_{T_{\scr{F}}}$ will be missing the edge $e$ that $\scr{F}$ added at step $\tau$. Note that $G'_{T_{\scr{F}}}+e\in \scr{P}$, so after $T_{\scr{F}}$ steps we define $\scr{F}'$ to run the edge-replacement procedure of $\scr{P}$ to ensure that after another $\sqrt{\ms}/\omega$ steps, it will be left with a graph which satisfies $\scr{P}$ with probability $1 - o(1/\sqrt{\ms})$. This completes the proof of \eqref{eqn:coupling_edge_replacement}, and so the lemma is proven.

\end{proof}

\subsubsection{Defining \ref{alg:free_move_potential}}


Recall that $\scr{S}$ is a deterministic strategy
which wins with probability at least $\theta$ after $N:=N(\theta)=\m(\theta,n)$ steps,
and $f$ is its indicator function. Observe
that $\mu :=\mb{E}[f(X)] =\mb{P}[f(X) =1] \ge \theta$ for $X =(X_1, \ldots , X_N)$,
where each $X_i$ is drawn independently from $\scr{D}$. 
Setting $C(\theta):=1+\log_2\left( \frac{1}{1-\theta}\right)$, we define
\begin{equation}\label{eqn:potential_amount}
c:=\frac{\mu(1- \mu)}{\sqrt{2C(\theta)\ms}}.
\end{equation}
The dependence of $c$ on $\theta$ and $\mu$ is for technical reasons which will only
become relevant in Section \ref{sec:analysis_gain}. For now, it suffices to think of $c$ as $\Theta(1/ \sqrt{\ms})$.
Our goal is to identify instantiations of $X$ in which by using the \textit{free-move}
of \ref{alg:free_move_potential}, we can boost the win probability of $\scr{S}$ by $c$.

We first consider the Doob-martingale $M =(M_j)_{j=0}^N$ of $f(X)$ 
with respect to $(X_j)_{j=1}^N$. That is, $M_0 := \mb{E}[f(X)]$ and $M_j := \mb{E}[ f(X) \mid X_1, \ldots X_j]$
for $j \in [N]$, where $M_N = f(X)$. Moreover, for each $1 \le j \le N$, define the function $f_j$,
where for each $(r_1, \ldots ,r_j) \in \supp(\D)^j$,
\begin{equation} \label{eqn:martingale_function}
    f_{j}(r_1, \ldots ,r_{j}):= \bE [ f(X) \mid (X_i)_{i=1}^j=(r_i)_{i=1}^j].
\end{equation}
Equivalently, $f_j(r_1, \ldots ,r_{j})$ is the probability that $\scr{S}$ wins after
$N$ steps, conditional on $X_{1} =r_1,\ldots ,X_{j} =r_j$. Observe that 
$M_j =f_j(X_1, \ldots ,X_j)$ by construction.
We say that $(r_1, \ldots ,r_j) \in \supp(\D)^j$ has \textbf{potential}, provided there exists $w_j \in \supp(\D)$ such that
\begin{equation}\label{eqn:potential}
f_j(r_1, \ldots ,r_j) +c < f_j(r_1, \ldots ,w_j).
\end{equation}
In this case, we refer to $w_j$ as a \textbf{witness} for $(r_1, \ldots ,r_j)$.
Note that there may be multiple witnesses for $(r_1, \ldots ,r_j)$. 
Intuitively, if $(r_1, \ldots ,r_j)$ has potential, then
$\scr{S}$ has a better win probability when $X_1 =r_1, \ldots ,X_j = w_j$, opposed to when $X_1 =r_1, \ldots ,X_j = r_j$,  While we cannot ensure that $X_j = w_j$ in the standard $\D$-process, we \textit{can} in the free-move $\D$-process.

Algorithm \ref{alg:free_move_potential} runs for $N$ steps, and yet has a slightly higher win probability
than $\scr{S}$. We assume that the algorithm is presented the subsets $X_1, \ldots , X_N$ in order.
We choose the edges in the same way as $\scr{S}$ up until the first step $1 \le t \le N$ such that $(X_1, \ldots , X_{t})$ has potential. 
Let us define $1 \le \tau \le N$ to be this step, where $\tau := \infty$ if no such step exists.
Assuming $\tau \le N$, we identify an arbitrary witness $W_{\tau}$ of $(X_1, \ldots , X_{\tau})$. At this point,
we invoke our free-move, and replace $X_{\tau}$ with $W_{\tau}$. For step $\tau$ and each subsequent step, we choose the edges
by following the strategy of $\scr{S}$ with $X_{\tau}$ replaced by $W_{\tau}$. 
Below is a formal description of the algorithm: 

\newpage

\begin{varalgorithm}{$\mathtt{PotentialBoost}$}
\caption{Free-move Strategy}
\label{alg:free_move_potential}
\begin{algorithmic}[1]
\Require $\til{G}_{0}=([n], \emptyset)$.
\Ensure $\til{G}_{N}$ 
\For{$t=1, \ldots , \min\{\tau -1, N\}$}          \Comment{follow decisions of $\scr{S}$}
\State Define $Y_t$ to be the edge chosen by $\scr{S}$ when given $(X_1, \ldots ,X_t)$.   
\State $\til{G}_{t} := \til{G}_{t-1} \cup Y_t$.
\EndFor
\If{$\tau \le N$}  let $W_{\tau}$ be an arbitrary witness of $(X_1, \ldots , X_{\tau})$.          \Comment{$(X_1, \ldots , X_{\tau})$ has potential}
\State Define $Y_{\tau}$ be the edge chosen by $\scr{S}$ when given $(X_1, \ldots ,W_{\tau})$.
\State $\til{G}_{\tau} := \til{G}_{\tau-1} \cup Y_\tau$.          \Comment{execute a free-move}
\For{$t= \tau+1, \ldots , N$}                                             \Comment{follow $\scr{S}$ with $X_{\tau}$ replaced with $W_{\tau}$}
\State Define $Y_t$ be the edge chosen by $\scr{S}$ when given $(X_1, \ldots ,W_{\tau}, \ldots , X_t)$.
\State $\til{G}_{t} := \til{G}_{t-1} \cup Y_t$.
\EndFor
\EndIf
\State \Return $\til{G}_{N}$.
\end{algorithmic}
\end{varalgorithm}
Let $G_N = G_{N}^{\scr{S}}$ be the graph formed by $\scr{S}$ when passed edge subsets $X_1, \ldots , X_N$ \ES{and let $\til{G}_N$ be the corresponding output from the \ref{alg:free_move_potential} algorithm}.
We compare $\til{G}_N$ to $G_N$: 
\begin{lemma} \label{lem:free_move_guarantee}
The graph $\til{G}_N$ satisfies the following:
\begin{enumerate}
    \item If $\mb{P}[ \tau > N] > 0$,
    then $\mb{P}[ \til{G}_N  \in \scr{P} \mid \tau > N] = \mb{P}[G_{N} \in \scr{P} \mid \tau > N].$ \label{item:no_free_move}
    \item If $\mb{P}[ \tau \le N] > 0$, then $\mb{P}[ \til{G}_{N} \in \scr{P} \mid \tau \le N] > \mb{P}[ G_{N} \in \scr{P} \mid \tau \le N]  + c$  \label{item:free_move}
    \item $\mb{P}[ \til{G}_N  \in \scr{P}] \ge \mb{P}[ G_{N} \in \scr{P} ] + c \cdot \mb{P}[ \tau \le N]$. \label{item:overall_boost}
\end{enumerate}
\end{lemma}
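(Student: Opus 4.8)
The plan is to analyze the three items by conditioning on the value of $\tau$, which is a stopping time for the filtration $(\sigma(X_1,\dots,X_j))_{j\ge 0}$. Throughout I fix, once and for all, a deterministic witness map: for every prefix $(r_1,\dots,r_j)\in\supp(\D)^j$ that has potential, let $w(r_1,\dots,r_j)$ be a fixed witness, so that $W_\tau = w(X_1,\dots,X_\tau)$ and \ref{alg:free_move_potential} is a deterministic function of $(X_1,\dots,X_N)$. A key preliminary observation is that the event $\{\tau=t\}$ is $\sigma(X_1,\dots,X_t)$-measurable, since it asserts that none of $(X_1),\dots,(X_1,\dots,X_{t-1})$ has potential while $(X_1,\dots,X_t)$ does. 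With this in hand, item \ref{item:no_free_move} is immediate: on $\{\tau > N\}$ the \textbf{If}-block of \ref{alg:free_move_potential} is never entered, so the algorithm makes exactly the choices of $\scr{S}$ on input $(X_1,\dots,X_N)$; hence $\til{G}_N = G_N$ pointwise on $\{\tau>N\}$, and the conditional probabilities coincide.

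For item \ref{item:free_move} I would first record the identity $f_j(s_1,\dots,s_j)=\bE[f(s_1,\dots,s_j,X_{j+1},\dots,X_N)]$, valid for every $(s_1,\dots,s_j)\in\supp(\D)^j$, which holds because $X_{j+1},\dots,X_N$ are i.i.d.\ copies of $\D$ independent of $(X_1,\dots,X_j)$. Fix $t$ with $\bP[\tau=t]>0$. Conditional on $\{\tau=t\}$ and on $(X_1,\dots,X_t)=(r_1,\dots,r_t)$, the graph $\til{G}_N$ is precisely the graph produced by $\scr{S}$ on the sequence $(r_1,\dots,r_{t-1},w_t,X_{t+1},\dots,X_N)$, where $w_t:=w(r_1,\dots,r_t)$ and $X_{t+1},\dots,X_N$ are i.i.d.\ $\D$ and independent of the conditioning event. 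Taking expectations and using the displayed identity gives
\[
\bP[\til{G}_N\in\scr{P}\mid \tau=t,(X_1,\dots,X_t)=(r_1,\dots,r_t)] = f_t(r_1,\dots,r_{t-1},w_t),
\]
so averaging over the prefix yields $\bP[\til{G}_N\in\scr{P}\mid\tau=t]=\bE[f_t(X_1,\dots,X_{t-1},W_t)\mid\tau=t]$. On the other hand, since $M$ is the Doob martingale of $f(X)$ and $\{\tau=t\}\in\sigma(X_1,\dots,X_t)$, the tower property gives
\[
\bP[G_N\in\scr{P}\mid\tau=t]=\bE[M_N\mid\tau=t]=\bE[M_t\mid\tau=t]=\bE[f_t(X_1,\dots,X_t)\mid\tau=t].
\]
By the definitions of potential and witness, on $\{\tau=t\}$ we have $f_t(X_1,\dots,X_{t-1},W_t)-f_t(X_1,\dots,X_t)>c$ pointwise; subtracting the two displays and then averaging over $t$ against the conditional law of $\tau$ given $\{\tau\le N\}$ yields item \ref{item:free_move}.

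Finally, item \ref{item:overall_boost} follows by splitting $\bP[\til{G}_N\in\scr{P}]$ over $\{\tau\le N\}$ and $\{\tau>N\}$, substituting the bounds from items \ref{item:no_free_move} and \ref{item:free_move}, and handling the degenerate cases $\bP[\tau\le N]=0$ or $\bP[\tau>N]=0$ directly. The only genuine subtlety — and the one step worth double-checking carefully — is the measurability bookkeeping in item \ref{item:free_move}: that $\{\tau=t\}$ indeed lies in $\sigma(X_1,\dots,X_t)$ (so the tower property applies and gives $\bE[M_N\mid\tau=t]=\bE[M_t\mid\tau=t]$), and that after the free-move at step $t$ the residual inputs $X_{t+1},\dots,X_N$ are still i.i.d.\ $\D$ and independent of the conditioning event, so that the algorithm's win probability on $\{\tau=t\}$ genuinely evaluates to $\bE[f_t(X_1,\dots,X_{t-1},W_t)\mid\tau=t]$ rather than to something that re-samples the already-revealed prefix.
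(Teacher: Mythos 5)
Your proof is correct and takes essentially the same route as the paper's: both condition on the realized prefix at the stopping time $\tau$ (the paper via its set $R$ of minimal-potential prefixes, you via the events $\{\tau=t\}$ together with the prefix), both observe that after the free-move the output is distributed as $G_N$ conditioned on the witness-substituted prefix, and both average the pointwise gap $f_t(r_1,\dots,w_t)-f_t(r_1,\dots,r_t)>c$ to obtain item \ref{item:free_move}. Your version spells out a bit more measurability bookkeeping (that $\{\tau=t\}\in\sigma(X_1,\dots,X_t)$ and the Doob-martingale tower step), which the paper leaves implicit, but the substance is identical.
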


\begin{proof}
We prove the properties of Lemma \ref{lem:free_move_guarantee} in order. First observe that $\tau \le N$ if and only if \ref{alg:free_move_potential} makes a free-move at
some step. Moreover, if \ref{alg:free_move_potential} does \textit{not} make a free move, then the algorithm
simply executes $\scr{S}$ as the subsets $X_1, \ldots , X_N$ arrive. Thus, $\til{G}_N$ and
$G_N$ are the same graph, and so in particular,
\[
    \mb{P}[ \til{G}_N  \in \scr{P} \mid \tau > N] = \mb{P}[ G_{N} \in \scr{P} \mid \tau > N].
\]

Let us now consider the case $\tau \le N$. It will be convenient to define $R$ to be those $(r_1, \ldots ,r_k) \in \cup_{i=1}^{N} \supp(\D)^{i}$,
such that $(r_1, \ldots, r_k)$ has potential, yet no proper prefix of $(r_1, \ldots ,r_k)$ has potential. Observe
that conditional on $\tau \le N$, $(X_1, \ldots , X_{\tau})$ is supported on $R$.
Now, fix $(r_1, \ldots , r_k) \in R$, and condition on $(X_1, \ldots , X_{k}) = (r_1, \ldots ,r_k)$.
Observe then that $\til{G}_N$ is distributed as $G_N$ conditional on $(X_1, \ldots , X_{k}) = (r_1, \ldots , w_k)$.
Thus, for each $(r_1, \ldots , r_{k}) \in R$,
\begin{align*}
    \mb{P}[\til{G}_N \in \scr{P} \mid (X_i)_{i=1}^k=(r_i)_{i=1}^k] &= \mb{P}[G_N \in \scr{P} \mid (X_i)_{i=1}^{k-1}=(r_i)_{i=1}^{k-1}, X_k=w_k] \\
    &=\mb{E}[f(X) \mid (X_i)_{i=1}^{k-1}=(r_i)_{i=1}^{k-1}, X_k=w_k] \\
    &= f_{k}(r_1, \ldots ,w_k)> f_{k}(r_1, \ldots ,r_k) +c,
\end{align*}
where \CM{the} second equality uses the definition of $f$, and the final inequality holds since $(r_1, \ldots ,r_k)$ has potential. 
By averaging over all the elements of $R$, property \eqref{item:free_move} follows.
Property \eqref{item:overall_boost} is implied by \eqref{item:no_free_move} and \eqref{item:free_move}:
\[
    \mb{P}[ \til{G}_N  \in \scr{P}] \ge \mb{P}[\text{$G_{N} \in \scr{P}$, $\tau > N$}]   + \mb{P}[\text{$G_{N} \in \scr{P}$, $\tau \le N$}]  + c \cdot \mb{P}[ \tau \le N] 
    = \mb{P}[ G_{N} \in \scr{P} ] + c \cdot \mb{P}[ \tau \le N].
\]
\end{proof}

\subsubsection{Bounding the Win Probability of \ref{alg:free_move_potential}} \label{sec:analysis_gain}
Observe that property \eqref{item:overall_boost} of Lemma \ref{lem:free_move_guarantee} ensures
\ref{alg:free_move_potential} has a win probability at least as large as $\scr{S}$. Moreover, by definition,
$c = \Theta(1/\sqrt{\ms})$. Thus, if we can show that the stopping time $\tau$
of \ref{alg:free_move_potential} satisfies $\mb{P}[\tau \le N] = \Omega(1)$,
then this will prove that \ref{alg:free_move_potential} boosts the win probability of $\scr{S}$
by $\Omega(1/\sqrt{\ms})$, as (roughly) claimed by Lemma \ref{lem:smallexplosion}.
Before proceeding with this lower bound, we state the following upper bound on $N(\theta)$, which relies on a standard multi-round exposure argument
to boost the win probability from $1/2$ to $\theta$.
\begin{proposition} \label{prop:multi_round}
If $C(\theta)=1+\log_2\left( \frac{1}{1-\theta}\right)$, then $N(\theta) \le C(\theta) \ms.$
\end{proposition}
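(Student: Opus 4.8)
The plan is a standard multi-round (independent repetition) exposure argument that boosts the win probability from $1/2$ to $\theta$ by running the optimal $\ms$-step strategy on several disjoint blocks of rounds. We may assume $\theta \ge 1/2$, since for $\theta < 1/2$ we trivially have $\m_{\cP}(\theta,n) \le \m_{\cP}(1/2,n) = \ms \le C(\theta)\ms$. Fix a strategy $\scr{S}_0$ witnessing $\ms = \m_{\cP}(1/2,n)$, so that $\Pr[T_{\scr{S}_0} \le \ms] \ge 1/2$. Set $k := \lceil \log_2(1/(1-\theta)) \rceil \ge 1$, and note $k \le 1 + \log_2(1/(1-\theta)) = C(\theta)$ while $2^{-k} \le 1-\theta$. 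I would build a strategy $\scr{S}'$ that runs for $k\ms$ steps in $k$ consecutive phases of length $\ms$; in phase $i$, the strategy $\scr{S}'$ executes $\scr{S}_0$ "from scratch", i.e.\ it feeds $\scr{S}_0$ the fresh subsets $X_{(i-1)\ms+1},\dots,X_{i\ms}$ and plays the edges $\scr{S}_0$ prescribes, disregarding everything built in earlier phases (and halting early if $\cP$ is already satisfied).

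The two facts that make this work are: (i) the subsets $(X_t)_{t\ge1}$ are i.i.d.\ from $\D$, so the edges $\scr{S}'$ adds during phase $i$ have exactly the distribution of the graph $\scr{S}_0$ builds in $\ms$ steps, and this is independent of phases $1,\dots,i-1$; and (ii) $\cP$ is monotone increasing, so if the edges added in phase $i$ alone already form a graph in $\cP$, then so does the (larger) current graph. Hence, conditioned on $\cP$ not yet holding after phase $i-1$, phase $i$ makes $\cP$ hold with probability at least $1/2$, and by induction $\Pr[T_{\scr{S}'} > k\ms] \le 2^{-k} \le 1-\theta$, so $\Pr[T_{\scr{S}'} \le k\ms] \ge \theta$. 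Since $\m_{\cP}(\theta,n)$ is by definition the least number of rounds in which some strategy attains win probability at least $\theta$ (the set of such round-counts being upward closed, as one may always waste a step), we conclude $N(\theta) = \m_{\cP}(\theta,n) \le k\ms \le C(\theta)\ms$.

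There is no serious obstacle here; the only points requiring a little care are spelling out that "$\scr{S}_0$ played from scratch inside phase $i$" is a legitimate $\D$-process strategy (it is, because the $\D$-process may be started from an arbitrary $G_0$ and we are simply choosing to ignore $G_0$), and the elementary reduction $\Pr[T_{\scr{S}'} > k\ms] \le \prod_{i=1}^{k} \Pr[\text{phase } i \text{ fails}\mid \text{phases } 1,\dots,i-1 \text{ failed}] \le 2^{-k}$ together with $\lceil \log_2(1/(1-\theta))\rceil \le C(\theta)$. If $\scr{S}_0$ happens to be randomized this changes nothing, provided each phase uses independent internal randomness.
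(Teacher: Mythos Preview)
Your proposal is correct and follows essentially the same multi-round exposure argument as the paper: both run the $\ms$-step strategy independently in $k=\lceil \log_2(1/(1-\theta))\rceil$ consecutive blocks, use independence and monotonicity to bound the failure probability by $2^{-k}\le 1-\theta$, and then note $k\le C(\theta)$. Your write-up is simply more detailed (explicitly handling $\theta<1/2$ and spelling out why restarting is a legitimate strategy).
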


\begin{proof}
	Suppose $\cS'$ is a strategy that succeed with probability at least $1/2$ in $\m$ steps. For any integer $k$, consider a strategy that runs for $k \m$ steps where for any $i=0,\dots, k-1$, in steps $\{i\m+1, i\m+2,\dots, (i+1)\m\}$ we run the strategy $\cS'$ as if the graph is empty. Then the probability of failure after $k N(1/2)$ steps is at most $(1/2)^k$. By letting $k=\lceil\log_2 (1/(1-\theta))\rceil$ and noting that $1-(1/2)^k\ge \theta$, we get that 
	\[N(\theta)\le k \m\le \left(1+\log_2\left( \frac{1}{1-\theta}\right)\right) \m\] 
	as desired.
\end{proof}

\begin{lemma} \label{lem:quantify_boost}
If $\mu := \mb{P}[ G_{N} \in \scr{P}]$, $\Pr[ \tau \le N] \ge  \frac{1-\mu}{2}$.
\end{lemma}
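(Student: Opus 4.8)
We want to show $\Pr[\tau \le N] \ge (1-\mu)/2$, i.e., that there is a constant probability that at some step the arriving edge subset can be swapped to boost the conditional win probability by at least $c = \mu(1-\mu)/\sqrt{2C(\theta)\ms}$. The plan is to argue by contradiction: if $\Pr[\tau \le N] < (1-\mu)/2$, then the Doob martingale $M = (M_j)_{j=0}^N$ of $f(X)$ essentially \emph{cannot move far} — because moving far would require some step to have potential — and this would force $f(X) = M_N$ to be concentrated around $M_0 = \mu$, contradicting the fact that $f$ is $\{0,1\}$-valued with $\mu$ bounded away from $0$ and $1$.

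First I would relate the event $\{\tau > N\}$ to a quantitative bound on the martingale increments. On the event $\{\tau > N\}$, no prefix $(X_1,\dots,X_j)$ has potential, which by the definition \eqref{eqn:potential} means that for every $j$ and every $w_j \in \supp(\D)$, $f_j(X_1,\dots,X_{j-1},w_j) \le f_j(X_1,\dots,X_{j-1},X_j) + c = M_j + c$. Since $X_j$ is itself a valid choice of $w_j$, averaging over $X_j$ (conditioning on $X_1,\dots,X_{j-1}$) gives $M_{j-1} = \mb{E}[M_j \mid X_1,\dots,X_{j-1}] \le M_j + c$ — wait, that is automatic; the real content is the \emph{reverse} direction. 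The point is that the conditional maximum over $w_j$ of the next martingale value is at most $M_j + c$, hence in particular $M_{j-1} \le (\text{that max}) \le M_j + c$, so $M_j \ge M_{j-1} - c$; that is, on $\{\tau > N\}$ the martingale can only \emph{decrease} by at most $c$ per step but could still increase a lot. To control upward movement I would instead apply the martingale concentration inequality (Theorem \ref{typical}, referenced in the excerpt) in the "non-standard way" the authors advertise: one runs the argument so that, conditioned on never having potential, the increments $M_j - M_{j-1}$ are bounded (below by $-c$, and the positive part is controlled because a large positive increment at step $j$ would have been detected as potential at step $j$ with witness $w_j = X_j$ relative to a \emph{typical} alternative — more precisely, potential is about beating $M_j$ by $c$, so if no $w_j$ beats $M_j$ by $c$ then $M_j \ge \sup_{w_j} f_j(\cdots,w_j) - c \ge M_{j-1} \cdot(\text{something})$...). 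Concretely, I would show that conditioned on $\{\tau > N\}$, $M$ is a supermartingale-like object with increments bounded in $[-c, \delta_j]$ where $\sum \delta_j$ or $\sum \delta_j^2$ is controlled by $N \le C(\theta)\ms$ (Proposition \ref{prop:multi_round}), and then Azuma/Theorem \ref{typical} forces $|M_N - M_0|$ to be $O(c\sqrt{N}) = O(\mu(1-\mu))$ with high conditional probability, with the constant small enough (that is the role of the precise constant $\sqrt{2C(\theta)}$ in $c$) that $M_N$ stays strictly between $0$ and $1$.

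Then I would close the argument with a second-moment / variance computation: $\var(f(X)) = \mu(1-\mu) = \mb{E}[(M_N - M_0)^2] = \sum_{j=1}^N \mb{E}[(M_j - M_{j-1})^2]$ by orthogonality of martingale increments. Splitting on whether the prefix has had potential by step $j$, the contribution from the event that no potential has occurred is $\le \mb{E}[(M_N - M_0)^2 \mathbf{1}_{\tau > N}]$ plus boundary terms, which by the concentration bound above is at most (something like) $\tfrac12 \mu^2(1-\mu)^2$ or at any rate strictly less than $\mu(1-\mu)$ when $\mu(1-\mu)$ is bounded; meanwhile the contribution from $\{\tau \le N\}$ is at most $\Pr[\tau \le N] \cdot \mathrm{(max\ increment)}^2 \cdot N \le \Pr[\tau \le N]$. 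Rearranging, $\mu(1-\mu) \le (\text{small}) + \Pr[\tau \le N]$, which after bookkeeping yields $\Pr[\tau \le N] \ge (1-\mu)/2$ (the asymmetric form $(1-\mu)/2$ rather than $\mu(1-\mu)$ presumably coming from only needing to rule out upward deviation, since $f \le 1$ automatically bounds $M_N$ from above, so only the probability of $f(X) = 1$ "leaking up" matters).

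The main obstacle I anticipate is making the "non-standard application" of the concentration inequality rigorous: the martingale $M$ is \emph{not} a priori bounded-increment, and the whole subtlety is that the no-potential condition only bounds increments \emph{on a conditional event}, so one cannot just invoke Azuma off the shelf. One has to either stop the martingale at $\tau$ and analyze the stopped process $M^{\tau}$ (whose increments past $\tau$ vanish, and before $\tau$ are controlled by the no-potential condition — but the bound on the \emph{upward} increment before $\tau$ is the delicate point, since "not having potential" bounds $\max_w f_j(\cdots,w)$, and one needs to convert that into a bound on $M_j - M_{j-1} = f_j(\cdots, X_j) - \mb{E}_{X_j}[f_j(\cdots,X_j)]$, using that the max over $w$ including $w = X_j$ is within $c$ of $M_j$). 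Getting the constants to line up so that the forced concentration genuinely contradicts $\mu \in (0,1)$ — which is exactly why $c$ was defined with the specific constant $\sqrt{2C(\theta)}$ tied to $N(\theta) \le C(\theta)\ms$ — is where the bulk of the careful work lies, and is presumably what Theorem \ref{typical} is custom-built to handle.
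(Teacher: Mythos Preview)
Your high-level plan is right: exploit that on $\{\tau > N\}$ the Doob martingale has controlled increments, so $f(X)=M_N$ would be concentrated near $\mu$, contradicting $\Pr[f(X)=0]=1-\mu$. You also correctly extract the one relevant consequence of ``no potential at step $j$'': since $\max_{w}f_j(X_1,\dots,X_{j-1},w)\le M_j+c$ and $M_{j-1}$ is an average of such values, one gets $M_j-M_{j-1}\ge -c$ on $\{\tau>j\}$. But then you go looking for an upper bound on $M_j-M_{j-1}$, and this is where the argument derails. There is \emph{no} such bound available from the no-potential condition, and both routes you sketch require one: standard Azuma needs two-sided increment bounds, and your variance decomposition $\mu(1-\mu)=\sum_j\mb{E}[(M_j-M_{j-1})^2]$ cannot be bounded on $\{\tau>N\}$ without controlling upward jumps. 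So neither of your proposed closings actually closes.

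The paper's resolution is exactly what you gesture at in your last sentence but do not carry out: Theorem~\ref{typical} (and its Corollary~\ref{simpletypical}) is engineered to give a \emph{one-sided} lower-tail Azuma bound using only the one-sided ``stable'' condition \eqref{eqn:stable_element}, at the price of an additive $\Pr[X\notin\Gamma]$ term. It does this by coupling $M$ with a genuinely balanced martingale $M'\le M$ on $\Gamma$ (Lemma~\ref{lem:martingale_coupling}); that coupling is the missing idea, and it is not recoverable from a stopped-martingale or variance argument. Once you accept Corollary~\ref{simpletypical} as a black box, the proof of the lemma is five lines: take $t=\mu/2$, so that
\[
1-\mu=\Pr[f(X)=0]\le \exp\!\left(-\frac{\mu^2}{2Nc^2}\right)+\Pr[X\notin\Gamma],
\]
use $N\le C(\theta)\ms$ (Proposition~\ref{prop:multi_round}) and the definition $c=\mu(1-\mu)/\sqrt{2C(\theta)\ms}$ to get $\mu^2/(2Nc^2)\ge (1-\mu)^{-2}$, and finish with the elementary inequality $\exp(-1/z^2)\le z/2$ for $z\in(0,1)$ applied at $z=1-\mu$, together with $\Pr[X\notin\Gamma]=\Pr[\tau\le N]$ (Proposition~\ref{prop:gamma_tau_equiv}). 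This is also where the asymmetric form $(1-\mu)/2$ comes from: only the lower tail $\{f(X)=0\}$ is invoked.
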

To establish Lemma \ref{lem:quantify_boost}, we invoke a concentration inequality for the Doob martingale
of $f(X)$ with respect to $(X_j)_{j=1}^N$ (see Corollary \ref{simpletypical}). 
We state and prove the full theorem in Section \ref{sec:approx_balanced},
and for now just indicate how we apply a special case of this theorem for our specific needs. 
The rough idea is as follows. If $\mb{P}[\tau \le N]$ were $o(1)$, then our concentration inequality
would imply that $f(X)$ must be concentrated about its expectation. But $f(X) \in \{0,1\}$, and $\mb{E}[f(X)] = \mu \ge \theta$,
so since we may assume that $\mu$ is bounded away from $1$, this is not possible. Thus, $\mb{P}[ \tau \le N]$ must be $\Omega(1)$.

To formalize this intuition, let us say that $r=(r_1, \ldots ,r_N) \in \supp(\D)^N$ is \textbf{stable} if no prefix
of $r$ has potential. That is, for each $1 \le j \le N$ and $w_j \in \supp(\scr{D})$,
\begin{equation}\label{eqn:stable_element}
f_{j}(r_1, \ldots ,w_j) - f_{j}(r_1, \ldots ,r_j) \le c.
\end{equation}
Define $\Gamma\subseteq \supp(\D)^N$ to be the stable elements of $\supp(\D)^N$. We relate $\Gamma$ to the stopping time $\tau$ of \ref{alg:free_move_potential}
in the following way:
\begin{proposition} \label{prop:gamma_tau_equiv}
$X=(X_1, \ldots , X_N) \in \Gamma$ if and only if $\tau > N$. In particular, 
$\Pr[X\not\in \Gamma] = \Pr[ \tau \le N]$.
\end{proposition}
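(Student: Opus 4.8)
The plan is to observe that this proposition is essentially a restatement of the definitions of $\tau$ and $\Gamma$, so the proof amounts to a short unwinding of those definitions. First I would recall that $\tau$ is defined as the least index $t \in \{1,\ldots,N\}$ for which the prefix $(X_1,\ldots,X_t)$ has potential (in the sense of \eqref{eqn:potential}), with the convention $\tau := \infty$ when no such $t$ exists. In particular $\tau$ takes values in $\{1,\ldots,N\}\cup\{\infty\}$, so the events $\{\tau\le N\}$ and $\{\tau>N\}$ partition the probability space, and $\{\tau>N\}=\{\tau=\infty\}$ is exactly the event that \emph{no} prefix $(X_1,\ldots,X_t)$ with $1\le t\le N$ has potential.

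Next I would match this event with membership in $\Gamma$. By definition, $r=(r_1,\ldots,r_N)$ is stable precisely when \eqref{eqn:stable_element} holds, i.e. for every $1\le j\le N$ and every $w_j\in\supp(\D)$ we have $f_j(r_1,\ldots,w_j)-f_j(r_1,\ldots,r_j)\le c$. Reading \eqref{eqn:potential} in contrapositive, the condition ``the prefix $(r_1,\ldots,r_j)$ does \emph{not} have potential'' is exactly ``$f_j(r_1,\ldots,w_j)\le f_j(r_1,\ldots,r_j)+c$ for all $w_j\in\supp(\D)$''. Since whether a prefix $(r_1,\ldots,r_j)$ has a witness depends only on its own coordinates $r_1,\ldots,r_j$, the statement ``no prefix of $r$ (of length $\le N$) has potential'' is literally the conjunction over $j\in\{1,\ldots,N\}$ of these per-prefix conditions, which is \eqref{eqn:stable_element}. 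Hence $(X_1,\ldots,X_N)\in\Gamma$ if and only if no prefix of $X$ has potential, which by the previous paragraph is exactly the event $\tau>N$. Taking complements and then probabilities yields $\Pr[X\notin\Gamma]=\Pr[\tau\le N]$.

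There is no genuine obstacle here: the proposition is definitional. The only points requiring a sentence of care are (i) that the convention $\tau:=\infty$ (rather than $\tau$ being undefined) on the event that no prefix has potential is what makes $\{\tau>N\}$ the exact complement of $\{\tau\le N\}$, and (ii) that ``stable'', which is phrased as a property of the full length-$N$ string via \eqref{eqn:stable_element}, and ``no prefix has potential'', which is phrased in terms of the run of \ref{alg:free_move_potential}, describe the same set of strings — both reduce to the same conjunction of per-prefix inequalities.
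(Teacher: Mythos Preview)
Your proposal is correct and matches the paper's approach: the paper does not give an explicit proof of this proposition, treating it as immediate from the definitions of $\tau$ (the first index at which a prefix has potential) and $\Gamma$ (the set of strings no prefix of which has potential), which is exactly the unwinding you provide.
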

We then invoke the following one-sided concentration inequality 
to lower bound $\Pr[X \not \in \Gamma]$:
\begin{cor}[of Theorem \ref{typical}]\label{simpletypical} 
	For each $t \ge 0$,
		$\Pr[f(X)\le \bE f(X)-t]\le \exp\left(\frac{-2t^2}{Nc^2}\right)+\Pr[X\not\in \Gamma]. $
\end{cor}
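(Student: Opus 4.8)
The plan is to deduce Corollary~\ref{simpletypical} as a direct instance of the martingale concentration inequality Theorem~\ref{typical} (stated and proved in Section~\ref{sec:approx_balanced}), applied to the Doob martingale $M=(M_j)_{j=0}^N$ of $f(X)$ with respect to $(X_j)_{j=1}^N$. Recall that $M_0=\bE f(X)=\mu$, that $M_N=f(X)$, and that $M_j=f_j(X_1,\dots,X_j)$ for each $j\in[N]$. The substantive content of the argument is an observation about the increments of $M$: although $|M_j-M_{j-1}|$ can be as large as $1$ in general (since $f$ is $\{0,1\}$-valued and each $f_j$ takes values in $[0,1]$), the increments are tightly controlled \emph{from below} precisely on the event $\{X\in\Gamma\}$ of stable sequences, and this event will play the role of the ``good event'' in Theorem~\ref{typical}.

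Concretely, the key step is to verify that on $\{X\in\Gamma\}$ one has $M_j-M_{j-1}\ge-c$ for every $j\in[N]$. This is immediate from the definition of a stable sequence: if $X\in\Gamma$, then \eqref{eqn:stable_element} gives $f_j(X_1,\dots,X_{j-1},w)\le f_j(X_1,\dots,X_j)+c=M_j+c$ for every $w\in\supp(\D)$. Since $M_{j-1}$ is the average of $f_j(X_1,\dots,X_{j-1},w)$ over an independent draw $w\sim\D$, averaging the last inequality yields
\[
M_{j-1}=\bE\!\left[\,f_j(X_1,\dots,X_{j-1},w)\mid X_1,\dots,X_{j-1}\,\right]\le M_j+c .
\]
So on $\Gamma$ the martingale $M$ decreases by at most $c$ at each step (the ``typical'' bound), while $|M_j-M_{j-1}|\le1$ holds always (the ``worst-case'' bound). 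Moreover, by Proposition~\ref{prop:gamma_tau_equiv} the complementary event $\{X\notin\Gamma\}$ is exactly $\{\tau\le N\}$, so the bad event is of the stopping-time type that Theorem~\ref{typical} is built to handle. Feeding these inputs into Theorem~\ref{typical}, with uniform per-coordinate bound $c_j=c$ (so that $\sum_{j=1}^N c_j^2=Nc^2$) and error probability $\bP[X\notin\Gamma]$, yields
\[
\bP[f(X)\le\bE f(X)-t]\le\exp\!\left(\frac{-2t^2}{Nc^2}\right)+\bP[X\notin\Gamma]
\]
for every $t\ge0$, with the case $t=0$ being trivial.

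The only delicate point at this level is bookkeeping: checking that the precise hypotheses of Theorem~\ref{typical} are met — in particular, that a \emph{one-sided} (downward) control of the increments off a bad set of stopping-time type suffices, and that the exponent carries the McDiarmid-type constant $2$ rather than the weaker Azuma-type $1/2$. The genuine obstacle — establishing sub-Gaussian concentration with this constant while controlling only the downward increments, and only on a typical event — is isolated in the proof of Theorem~\ref{typical} itself, which we invoke here as a black box.
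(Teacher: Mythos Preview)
Your proposal is correct and takes the same approach as the paper: the corollary is a direct instantiation of Theorem~\ref{typical} with $k=N$, $c_j=c$ for all $j$, and $M$ the Doob martingale of $f(X)$, since the set $\Gamma$ defined via \eqref{eqn:stable_element} is literally the set $\Gamma_M$ of stable elements defined via \eqref{eqn:gamma_definition}. Your discussion of the increment bound $M_j-M_{j-1}\ge -c$ on $\Gamma$ is correct but superfluous, as the hypothesis of Theorem~\ref{typical} is precisely the stability condition \eqref{eqn:gamma_definition} rather than an increment bound; once you note $\Gamma=\Gamma_M$, the conclusion follows by substitution.
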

\begin{proof}[Proof of Lemma \ref{lem:quantify_boost}]
By setting $t= \mu/2$ where $\mu = \mb{P}[f(X) =1] = \mb{E}[f(X)]$, Corollary \ref{simpletypical} implies that
\[
1-\mu=\Pr[f(X)=0]=\Pr[f(X)\le \mu/2 ]\le  \exp\left(\frac{-\mu^2}{2N c^2}\right)+\Pr[X \not\in \Gamma].
\]
Thus, since $c:=\frac{\mu(1- \mu)}{\sqrt{2C(\theta)\ms}}$, and $N \le C(\theta) \ms$ by Proposition \ref{prop:multi_round},
we get that $\mu^2/(2Nc^2)\ge (1-\mu)^{-2}$. Now, $\Pr[X\not\in \Gamma] = \Pr[ \tau \le N]$, by Proposition \ref{prop:gamma_tau_equiv}, so it follows that
\[
 \Pr[ \tau \le N] \ge 1-\mu-\exp\left(\frac{-1}{(1-\mu)^2}\right) \ge \frac{1-\mu}{2},
\]
where the last step uses the elementary inequality $ \exp({-1/z^2})\le z/2$ for $z\in (0,1)$.
\end{proof}	
	

\subsubsection{Putting it All Together}

\begin{proof}[Proof of Lemma \ref{lem:smallexplosion}]
Let us set $N' := N + \sqrt{\ms}/\omega$ for convenience. Observe that by Lemma \ref{lem:free_move_coupling_short},
we are guaranteed a strategy for the standard $\D$-process which constructs $G_{N'}$ such that
\[
\mb{P}[G_{N'} \in \scr{P}] \ge \left(1 - o(1/\sqrt{\ms})\right) \cdot \mb{P}[\til{G}_N \in \scr{P}].
\]
Now, after applying Lemmas \ref{lem:free_move_guarantee} and \ref{lem:quantify_boost},
we get that
$\mb{P}[\til{G}_N \in \scr{P}] \ge \mu  + \frac{\mu (1-\mu)^2}{\sqrt{8 C(\theta)\ms}},$
for $\mu = \mb{P}[ G_{N} \in \scr{P}]$.
On the other hand, $\mb{P}[ G_{N} \in \scr{P} ] \ge \theta$, and $z \rightarrow z + \frac{z (1-z)^2}{\sqrt{8 C(\theta)\ms}}$
is increasing
as a function of $z$ (it is routine to check that it has a positive derivative), so we get that
\[
\mb{P}[ \til{G}_N  \in \scr{P}] \ge \theta  + \frac{\theta (1-\theta)^2}{\sqrt{8 C(\theta)\ms}}.
\]
However, $C(\theta):=1+\log_2\left( \frac{1}{1-\theta}\right)$, so $C(\theta)\le 1/(1-\theta)^2$ by the elementary inequality $1+\log_2(z)\le z^2$ for $z\ge 1$.
Thus, $\mb{P}[ \til{G}_N  \in \scr{P}] \ge \frac{\theta (1-\theta)^3}{\sqrt{8\ms}}$,
and so
\begin{align*}
    \mb{P}[G_{N'} \in \scr{P}] &\ge \left(1 - o(1/\sqrt{\ms}) \right) \left(\theta + \frac{\theta (1-\theta)^3}{ \sqrt{8 \ms}} \right)\ge \theta + \frac{\theta (1-\theta)^3}{ 4\sqrt{\ms}},
\end{align*}
where the last inequality holds for sufficiently large $n$ (dependent on $\theta_1$ and $\theta_2$).
\end{proof}




\section{On Approximately Balanced Martingales} \label{sec:approx_balanced}
Let $S_0,\dots, S_k$ be finite sets,
and suppose that $X=(X_j)_{j=0}^{k}$ is a random variable in $S:=S_0\times \dots \times S_k$,
where $S_j := \supp(X_j)$.
Moreover, assume that $M=(M_j)_{j=0}^{k}$ is a martingale with respect to $(X_j)_{j=0}^{k}$. Thus,
there exists a function $m_{j}: S_0\times \dots \times S_j \rightarrow \mb{R}$, such
that $M_j = m_{j}(X_0, \ldots ,X_j)$. 
Given a constant $c_j \ge 0$, we say that $M_j$ is \textbf{balanced} (with respect to $c_j$), provided for all $(s_0, \ldots ,s_j) \in S_0 \times \dots \times S_j$ and $s_{j}' \in S_j$,
\begin{equation} \label{eqn:balanced_definition}
    m_{j}(s_0, \ldots ,s'_j) - m_{j}(s_0, \ldots ,s_j) \le c_j.
\end{equation}
From the definition of martingale, we get the following:
\begin{proposition} \label{prop:balanced_implies_bounded}
If $M_j$ is balanced, then $|M_j - M_{j-1}| \le c_j$.
\end{proposition}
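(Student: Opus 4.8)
The plan is to express $M_{j-1}$ as a convex combination, over $s_j' \in S_j$, of the values $m_j(X_0,\dots,X_{j-1},s_j')$, and then compare each term of this average to $M_j$ by invoking the balanced inequality \eqref{eqn:balanced_definition} twice -- once with each of the two relevant tuples playing the role of the ``base'' tuple.

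Concretely, I would start from the martingale identity $M_{j-1} = \mb{E}[M_j \mid X_0,\dots,X_{j-1}]$. Writing $M_j = m_j(X_0,\dots,X_j)$ and setting $p(s_j') := \Pr[X_j = s_j' \mid X_0,\dots,X_{j-1}]$, this becomes
\[
M_{j-1} = \sum_{s_j' \in S_j} p(s_j')\, m_j(X_0,\dots,X_{j-1},s_j'),
\]
a sum with nonnegative weights totalling $1$. Here one should note that $p(s_j') > 0$ forces $s_j' \in \supp(X_j) = S_j$, so the average is genuinely indexed by $S_j$, which is exactly the index set over which \eqref{eqn:balanced_definition} is quantified. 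Fix the realised value $(s_0,\dots,s_j)$ of $(X_0,\dots,X_j)$, so that $M_j = m_j(s_0,\dots,s_j)$.

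For the upper bound on $M_j - M_{j-1}$, I would apply \eqref{eqn:balanced_definition} with base tuple $(s_0,\dots,s_{j-1},s_j')$ and replacement coordinate $s_j$, which yields $m_j(s_0,\dots,s_{j-1},s_j) - m_j(s_0,\dots,s_{j-1},s_j') \le c_j$ for every $s_j' \in S_j$; averaging against the weights $p(s_j')$ gives $M_j - M_{j-1} \le c_j$. Symmetrically, applying \eqref{eqn:balanced_definition} with base tuple $(s_0,\dots,s_{j-1},s_j)$ and replacement $s_j'$ shows that every term in the same average is at least $-c_j$, hence $M_j - M_{j-1} \ge -c_j$. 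Together these give $|M_j - M_{j-1}| \le c_j$.

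I do not expect any genuine obstacle: the argument is a short convexity computation. The only points to be slightly careful about are (i) that the conditional law of $X_j$ given $(X_0,\dots,X_{j-1})$ is supported on $S_j$, so the balanced hypothesis applies to each summand of the average, and (ii) that the balanced hypothesis is stated for \emph{all} base tuples in $S_0\times\cdots\times S_j$, which is precisely what allows it to be used in both directions and thereby produce a two-sided bound.
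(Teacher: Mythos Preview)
Your proposal is correct and is precisely the argument the paper has in mind: the paper does not spell out a proof but simply remarks that the proposition follows ``from the definition of martingale,'' i.e., from writing $M_{j-1}=\mb{E}[M_j\mid X_0,\dots,X_{j-1}]$ as a convex combination of the values $m_j(X_0,\dots,X_{j-1},s_j')$ and applying \eqref{eqn:balanced_definition} in both directions, exactly as you do.
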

If we are given constants $c =(c_j)_{j=1}^k$, such that each $M_j$
is balanced with respect to $c_j$, then we say that $M$ is \textbf{balanced} (with respect to $c$).
Observe that if $M$ is balanced, then $|M_j - M_{j-1}| \le c_j$ for all $j \in [k]$ (i.e., 
$M$ is $c$-\textbf{Lipschitz}).  As a result, one can apply the Azuma-Hoeffding inequality to argue that $M_k$ is concentrated about $M_0$.
On the other hand, if $M$ is $c$-Lipschitz, then $M$ is $(2c_j)_{j=0}^k$ balanced. Thus, the balanced
property is also \textit{necessary} to apply the Azuma-Hoeffding inequality.

This raises the question of what can be done if $M$ is not balanced.
We provide a lower tail concentration inequality which depends on the probability
\textit{each} $M_j$ satisfies \eqref{eqn:balanced_definition}
on the randomly chosen point $(X_0, \ldots ,X_{j-1},X_j)$, for all $s'_j \in S_j$. More formally, we say that $(s_0, \ldots ,s_k) \in S$ is \textbf{stable}
with respect to $M$ and $c$, provided for all $1 \le j \le k$ and $s_{j}' \in S_j$,
\begin{equation} \label{eqn:gamma_definition}
    m_{j}(s_0, \ldots ,s'_j) - m_{j}(s_0, \ldots ,s_j) \le c_j.
\end{equation}
Define $\Gamma_M \subseteq S$ to be the stable elements of $S$.
We measure the balance of $M$ based on the value of $\Pr[X \in \Gamma_M]$,
where $\Pr[X \in \Gamma_M] =1$ indicates that $M$ perfectly
satisfies the balanced definition.

\begin{thm}\label{typical}
Suppose $M=(M_j)_{j=0}^k$ is martingale with respect to a sequence of discrete random variables
$X=(X_j)_{j=0}^{k}$ in $S=S_0 \times \dots \times S_k$, where $S_j := \supp(X_j)$. Given constants $c = (c_j)_{j=1}^k$, let $\Gamma_M \subseteq S$ be the stable elements of $S$ with respect to $M$ and $c$.
In this case, for any $t \ge 0$, 
\[
\Pr[M_k \le M_0 - t]\le \exp\left(\frac{-2 t^2}{\sum_{j= 1}^{k} c_j^2}\right)+\Pr[ X\not\in \Gamma_M].
\]	\end{thm}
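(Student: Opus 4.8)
\textbf{Proof proposal for Theorem~\ref{typical}.}

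The plan is to reduce the claim to a standard Azuma--Hoeffding estimate by replacing $M$ with a truncated/clipped martingale $\widetilde M$ that is globally $c$-Lipschitz and agrees with $M$ on the event $\{X \in \Gamma_M\}$, at least along the relevant prefixes. Concretely, I would build $\widetilde M$ forward in $j$: set $\widetilde M_0 := M_0$, and having defined $\widetilde m_{j-1}$, define the one-step increment by clipping the true increment of $M$ from above at $c_j$. That is, working on the underlying probability space, let $D_j := M_j - M_{j-1}$ and define $\widetilde D_j$ so that $\widetilde M_j := \widetilde M_{j-1} + \widetilde D_j$ is again a martingale with respect to $(X_j)$ but has increments bounded above by $c_j$ pointwise. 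The natural choice is to cap the conditional-expectation function: for fixed prefix $(s_0,\dots,s_{j-1})$, replace the function $s_j \mapsto m_j(s_0,\dots,s_j)$ by $s_j \mapsto \min\{ m_j(s_0,\dots,s_j),\, A_j(s_0,\dots,s_{j-1}) \}$ where the threshold $A_j$ is chosen (by an intermediate value argument, since the cap is monotone and continuous in the threshold) exactly so that the capped function still has the correct conditional mean $\widetilde m_{j-1}(s_0,\dots,s_{j-1})$. This is the standard ``trimmed martingale'' construction.

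Next I would record two properties of $\widetilde M$. First, by construction $\widetilde M$ is a martingale with $\widetilde M_j - \widetilde M_{j-1} \le c_j$ pointwise; and because a mean-preserving downward cap cannot push the increment below $-\,c_j$ either when the original increments are not too wild --- more carefully, one shows $|\widetilde M_j - \widetilde M_{j-1}| \le c_j$ using that the capped function lies between $\widetilde m_{j-1} - c_j$ and $\widetilde m_{j-1} + c_j$ --- we get that $\widetilde M$ is $c$-Lipschitz, so the one-sided Azuma--Hoeffding inequality gives $\Pr[\widetilde M_k \le \widetilde M_0 - t] \le \exp(-2t^2/\sum_{j=1}^k c_j^2)$. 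Second, on the event $\{X \in \Gamma_M\}$ the cap is never active: if no prefix of $X$ has potential, then $m_j(X_0,\dots,X_j') - m_j(X_0,\dots,X_j) \le c_j$ for all $j$ and all alternatives $X_j'$, which forces the threshold $A_j$ never to bind along the realized trajectory, hence $\widetilde M_j = M_j$ for all $j$ on this event --- in particular $\widetilde M_k = M_k$ there. I would prove the ``cap never binds'' claim by induction on $j$, using the stability inequalities~\eqref{eqn:gamma_definition} to show that the maximum of $m_j(s_0,\dots,\cdot)$ over $S_j$ is already at most $\widetilde m_{j-1} + c_j$, so capping at any threshold achieving the right mean leaves the realized value untouched.

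Finally I would combine the two facts by a union bound:
\[
\Pr[M_k \le M_0 - t] \le \Pr[M_k \le M_0 - t,\ X \in \Gamma_M] + \Pr[X \notin \Gamma_M] \le \Pr[\widetilde M_k \le \widetilde M_0 - t] + \Pr[X \notin \Gamma_M],
\]
since on $\{X \in \Gamma_M\}$ we have $M_k = \widetilde M_k$ and $M_0 = \widetilde M_0$, and the first term is then bounded by the Azuma estimate above. This yields exactly the stated inequality.

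The main obstacle I expect is making the trimming construction fully rigorous: one must verify that a threshold $A_j$ achieving the prescribed conditional mean always exists (continuity/monotonicity of $\theta \mapsto \bE[\min\{M_j,\theta\}\mid X_0,\dots,X_{j-1}]$ in the threshold $\theta$, together with the fact that at $\theta = +\infty$ the mean is $\widetilde m_{j-1}$ and decreases as $\theta$ decreases, so some finite or infinite threshold works), and that the resulting $\widetilde M$ is genuinely adapted and $c$-Lipschitz on \emph{both} sides rather than just bounded above. A clean way to sidestep the two-sided Lipschitz subtlety is to note we only need a one-sided concentration bound, and Azuma--Hoeffding for the deviation $\Pr[\widetilde M_k \le \widetilde M_0 - t]$ in fact only requires increments bounded in an interval of length $c_j$ (here $[\,-c_j, c_j\,]$ would suffice, or even the tighter interval the cap produces); I would invoke the interval form of Azuma--Hoeffding to keep the bookkeeping minimal. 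The rest is routine.
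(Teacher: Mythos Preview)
Your high-level plan---build an auxiliary balanced martingale with the same initial value, apply Azuma--Hoeffding to it, then union-bound with $\Pr[X\notin\Gamma_M]$---is exactly the paper's strategy. The gap is in the construction of $\widetilde M$ and in the claim that $\widetilde M_k = M_k$ on $\{X\in\Gamma_M\}$.

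First, the capping construction as you describe it is vacuous. You set $\widetilde m_0 = m_0$ and then, at step $j$, cap $s_j\mapsto m_j(s_0,\dots,s_j)$ from above at a threshold $A_j$ chosen so the capped mean equals $\widetilde m_{j-1}$. But capping from above can only \emph{decrease} the conditional mean, and the uncapped mean is $m_{j-1}$, not $\widetilde m_{j-1}$ (your remark ``at $\theta=+\infty$ the mean is $\widetilde m_{j-1}$'' is incorrect). At $j=1$ you have $\widetilde m_0=m_0$, so the only admissible threshold is $A_1=+\infty$, hence $\widetilde m_1=m_1$; by induction $\widetilde M\equiv M$, and nothing has been balanced.

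Second, the target ``$\widetilde M_k=M_k$ on $\Gamma_M$'' is in general unachievable by \emph{any} balanced martingale with $\widetilde M_0=M_0$. Take $k=1$, $X_1\in\{a,b\}$ each with probability $1/2$, $m_1(a)=0$, $m_1(b)=10$, $c_1=1$. Then $(s_0,b)\in\Gamma_M$ since $m_1(s')-m_1(b)\le 0\le c_1$ for all $s'$, while $(s_0,a)\notin\Gamma_M$. Any balanced $\widetilde M$ with $\widetilde M_0=5$ must have $|\widetilde m_1(a)-\widetilde m_1(b)|\le 1$ and $\widetilde m_1(a)+\widetilde m_1(b)=10$, forcing $\widetilde m_1(b)\le 5.5<10=m_1(b)$. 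So equality on $\Gamma_M$ fails.

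The paper repairs this by aiming only for \emph{domination}: it constructs $M'$ balanced with $M'_0=M_0$ and $M'_j\le M_j$ for all $j$ on $\{X\in\Gamma_M\}$. This inequality (rather than equality) is exactly what the one-sided bound needs, since $\{M_k\le M_0-t\}\cap\{X\in\Gamma_M\}\subseteq\{M'_k\le M'_0-t\}$. The construction is not a pure upper cap: at each level, the ``large'' values (those within $c_j$ of the maximum) are shifted \emph{down} by some $\gamma_B\ge 0$, while the ``small'' values are replaced by their common conditional mean shifted \emph{up} by the complementary $\gamma_A$, with $\gamma$ chosen so the mean is preserved and the resulting range is at most $c_j$. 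Stable realizations always sit among the large values, so along $\Gamma_M$ one only ever subtracts, yielding $M'_j\le M_j$. If you modify your construction to this two-sided redistribution and weaken your coupling claim from equality to $\widetilde M_k\le M_k$ on $\Gamma_M$, the rest of your argument goes through verbatim.
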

\begin{remark}
We can derive an upper tail concentration inequality by negating the left-hand side of
\eqref{eqn:gamma_definition} to modify the definition of $\Gamma_M$.
We also note that our approach can be seen as a refinement of the decision tree approach of \cite{chung2006concentration}, which was used to prove various concentration inequalities for martingales which are tolerant to ``bad" events.
\end{remark}
In order to prove Theorem \ref{typical},
we couple $M = (M_j)_{j=0}^k$ with another martingale $M' = (M'_j)_{j=0}^k$
which is balanced and dominated by $M$ on $\Gamma_M$. 
\begin{lemma}\label{lem:martingale_coupling}
There exists a coupling of $M$, and another martingale $M' = (M'_j)_{j=0}^{k}$ with respect
to $(X_j)_{j=0}^k$, such that the following conditions hold:
\begin{enumerate}[label=(\subscript{Q}{{\arabic*}})]
    \item `Initial values': $M'_0 = M_0$.  \label{prop:initial_values}
    \item `Balanced': $M'$ is balanced with respect to $c_1, \ldots , c_k$. \label{prop:balanced_increments}
    \item `Domination': If $X \in \Gamma_M$, then $M'_{j} \le M_j$ \label{prop:domination}
    for all $j \in [k]$. 
\end{enumerate}
\end{lemma}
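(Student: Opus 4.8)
The goal is to build, alongside $M$, a balanced martingale $M'$ that never exceeds $M$ on the stable set $\Gamma_M$. The natural approach is to construct $M'$ by recursively ``truncating'' the increments of $M$ so that the balanced inequality \eqref{eqn:balanced_definition} is forced to hold, while arranging that the truncation only ever pushes values \emph{down}, so domination is automatic on stable sequences. Concretely, I would define $M'$ via its representing functions $m'_j : S_0 \times \dots \times S_j \to \mb{R}$ by induction on $j$, starting from $m'_0 := m_0$ (which gives \ref{prop:initial_values}). Given $m'_{j-1}$, I first form a provisional function $\widehat{m}_j(s_0,\dots,s_j) := m'_{j-1}(s_0,\dots,s_{j-1}) + \big(m_j(s_0,\dots,s_j) - m_{j-1}(s_0,\dots,s_{j-1})\big)$, i.e. I add the \emph{true} increment of $M$ at step $j$ onto the already-modified value at step $j-1$. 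This $\widehat{m}_j$ need not be balanced in the $s_j$ coordinate, so I then cap it: set
\[
m'_j(s_0,\dots,s_j) := \min\Big\{\widehat{m}_j(s_0,\dots,s_j),\ c_j + \min_{s_j' \in S_j}\widehat{m}_j(s_0,\dots,s_{j-1},s_j')\Big\}.
\]
By construction the difference $m'_j(s_0,\dots,s'_j) - m'_j(s_0,\dots,s_j)$ is at most $c_j$ for every $s'_j$, since the second term in the $\min$ is an explicit upper bound of the form ``$c_j + (\text{something} \le m'_j(s_0,\dots,s_j))$''; this gives \ref{prop:balanced_increments}. I then let $M'_j := m'_j(X_0,\dots,X_j)$.

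The two points needing genuine care are: (i) that $M'$ is actually a martingale with respect to $(X_j)$, and (ii) the domination property \ref{prop:domination}. For (ii): on a stable sequence $X \in \Gamma_M$, inequality \eqref{eqn:gamma_definition} says precisely that the true increment function $m_j(X_0,\dots,X_{j-1},\cdot)$ already satisfies the balanced bound at the realized point, so — arguing by induction on $j$, using $M'_{j-1} \le M_{j-1}$ — the capping $\min$ is not binding at $(X_0,\dots,X_j)$, whence $M'_j = \widehat{m}_j(X_0,\dots,X_j) = M'_{j-1} + (M_j - M_{j-1}) \le M_{j-1} + (M_j - M_{j-1}) = M_j$. (One must check the cap is inactive \emph{at the realized point}, not everywhere, which is exactly what stability gives; the $\min$ over $s_j'$ in the second term ranges over all of $S_j$, but stability bounds the gap from the realized $X_j$ to every competitor, so the competitor-minimum plus $c_j$ still dominates the realized value.)

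The martingale property (i) is the main obstacle, because the naive truncation above only takes $\min$, which will in general destroy the conditional-expectation identity $\E[M'_j \mid X_0,\dots,X_{j-1}] = M'_{j-1}$. The fix is to not truncate by a raw $\min$ but to \emph{recenter}: after forming $\widehat m_j$ I should subtract from it, for each fixed prefix $(s_0,\dots,s_{j-1})$, a constant chosen so that the conditional mean over $X_j$ returns to $m'_{j-1}(s_0,\dots,s_{j-1})$, and crucially this recentering constant should be $\ge 0$ so that $m'_j \le \widehat m_j$ still holds and domination survives. Whether a single capping-then-recentering step simultaneously (a) keeps $\E[\cdot\mid X_0,\dots,X_{j-1}] = M'_{j-1}$, (b) keeps the $c_j$-balanced bound, and (c) keeps $m'_j \le \widehat m_j$ pointwise is the delicate combinatorial/analytic claim; I expect it follows from a ``water-filling'' argument — repeatedly clip the largest values down to a common ceiling and redistribute the clipped mass as a uniform downward shift — and that the process terminates at a ceiling $\le \big(\min_{s_j'}\widehat m_j(\cdots,s_j')\big) + c_j$, which is what makes balance and domination compatible. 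I would phrase this as a self-contained one-step lemma ("given a finite collection of reals with a prescribed mean, there is a pointwise-smaller collection with the same mean whose range is at most $c_j$, provided $c_j$ exceeds ... "), verify its hypothesis holds here because $\widehat m_j$ is obtained from a martingale increment (so the mean of the increment is $0$, giving enough ``room''), and then feed it into the induction. With the one-step lemma in hand, properties \ref{prop:initial_values}–\ref{prop:domination} all follow by the induction sketched above, and Theorem \ref{typical} then drops out by applying Azuma–Hoeffding to the balanced martingale $M'$ and using $\Pr[M_k \le M_0 - t] \le \Pr[M'_k \le M'_0 - t] + \Pr[X \notin \Gamma_M]$ via domination.
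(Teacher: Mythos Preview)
Your overall architecture (inductively modify the step-$j$ values to force balance while preserving the martingale mean, and argue domination on $\Gamma_M$) matches the paper's proof, but the truncation is in the wrong direction, and this is a genuine gap, not a cosmetic slip.

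Stability \eqref{eqn:gamma_definition} says $m_j(\cdots,s'_j) - m_j(\cdots,X_j) \le c_j$ for all $s'_j$, i.e.\ the realized value sits within $c_j$ of the \emph{maximum}, not the minimum. Your cap is at $c_j + \min_{s'_j}\widehat m_j$, so it is precisely the large (stable) values that get clipped; the claim ``the cap is inactive at the realized point'' is false. Worse, because $\min(\cdot,\text{cap})$ can only lower values, the conditional mean drops, so your recentering constant must be \emph{added}, not subtracted. That additive shift can push a stable value above $m_j$. Concretely: take $S_j=\{a,b,c\}$ with equal probabilities, $m_j(a)=0$, $m_j(b)=1/2$, $m_j(c)=3/2$, $c_j=1$, and $m'_{j-1}=m_{j-1}=2/3$. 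Then $b$ is stable, your cap at $1$ gives $(0,1/2,1)$ with mean $1/2$, and recentering by $+1/6$ yields $m'_j(b)=2/3 > 1/2 = m_j(b)$, so \ref{prop:domination} fails. (Your one-step lemma ``pointwise-smaller with the same mean'' is impossible as stated for the same reason.)

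The fix is to truncate from below: set the floor at $\max_{s'_j}\widehat m_j - c_j$, which raises only the \emph{small} (non-stable) values and leaves every stable value untouched; now the mean goes up, so recentering is a \emph{downward} shift, and domination on $\Gamma_M$ survives for free. This is exactly what the paper does, phrased slightly differently: it partitions $S_1$ into ``small'' and ``large'' values, replaces the small block by its (shifted) conditional average, and shifts the large block down, with the two shifts chosen to preserve the mean and land the small block inside the large block's $c_1$-window. Stable points are large, hence only ever decreased.
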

\begin{proof}
In order to prove the lemma for $M= (M_j)_{j=0}^k$,
we proceed inductively on the value $k$.
Firstly, observe that if $k=0$, then we may set $M_{0}':=M_0$, and so the required properties
hold trivially. Let us now take $k \ge 1$, and assume that the lemma holds for $k-1$.

In order to simplify the notation below, let $\scr{F}_{j} = \sigma(X_0, \ldots ,X_j)$ be the sigma-algebra
generated by $X_0, \ldots , X_j$. It will be convenient to first assume that
$X_0$ is constant, so that $\scr{F}_0$ is the trivial sigma-algebra (i.e., $\scr{F}_0 = \{\emptyset, \Omega\}$), and $M_0$ is constant.  \CM{Note that then the function $m_j$ satisfies
$M_{j} = m_{j}(X_1, \ldots , X_j)$ for $1 \le j \le k$.}

As in the base case, we first set $M'_{0} := M_0$ so that \ref{prop:initial_values} is satisfied. In order to define $M'_1$,
the high level idea is to \ES{modify} $M_1$ in such a way that $M'_1$ is balanced,
while maintaining the martingale property. \ES{This requires us to shift each element's value either up or down.
In order to also satisfy the domination property, we must ensure
that certain elements are only ever downshifted.}

Let us say that $s_1 \in S_1$ is \textit{small}, provided $m_1(s_1) < m_1(s'_1) - c_1$
for some $s'_1 \in S_1$. Let $A$ be the small elements of $S_1$,
and $B:= S_1 \setminus A$.
Observe that if $A \neq \emptyset$, then $B \neq \emptyset$.
Moreover, for each $b, b' \in B$, we have that
\begin{equation} \label{eqn:balanced}
    |m_{1}(b) - m_{1}(b')| \le c_1. 
\end{equation}
We refer to $B$ as the \textit{large} elements of $S_1$.
Let us proceed with our construction under the assumption that $A \neq \emptyset$,
so that $\Pr[X_1 \in A] > 0$ and $\Pr[X_1 \in B] >0$.
When $A = \emptyset$, the construction follows easily
from the inductive assumption.

Observe that $\bE[ M_1 \mid X_1 \in A]\le  \max_{b \in B} m_{1}(b)$ \ES{since $m_1(a)\le m_1(b)$ for all $a\in A, b\in B$}.
Thus, there exists $\gamma \ge 0$ such that
\CM{\begin{equation*}
    \bE[ M_1  \mid X_1 \in A] + \frac{\gamma}{\mb{P}[X_1 \in A]} + \frac{\gamma}{\Pr[X_1 \in B]} \in \left[\min_{b \in B}m_{1}(b), \max_{b \in B} m_{1}(b)\right],
\end{equation*}
and so}
\begin{equation} \label{eqn:shift_gamma}
    \bE[ M_1  \mid X_1 \in A] + \frac{\gamma}{\mb{P}[X_1 \in A]} \in \left[\min_{b \in B}m_{1}(b) - \frac{\gamma}{\Pr[X_1 \in B]}, \max_{b \in B} m_{1}(b) - \frac{\gamma} {\Pr[X_1 \in B]}\right].
\end{equation}
Setting $\gamma_A := \gamma/\Pr[X_1 \in A]$ and $\gamma_B := \gamma/ \Pr[X_1 \in B]$ for convenience, we
define
\begin{equation} \label{eqn:level_one_martingale}
    M'_1 := (\bE[ M_1  \mid X_1 \in A] + \gamma_A) \cdot \bm{1}_{[X_1 \in A]} + (M_1 - \gamma_B) \cdot \bm{1}_{[X_1 \in B]}.
\end{equation}
Thus, relative to $M_1$, $M'_1$ lowers the value of each $b \in B$ by $\gamma_B$,
and assigns $\bE[ M_1  \mid X_1 \in A] + \gamma_A$ to every $a \in A$.
Observe first that because of \eqref{eqn:balanced} and \eqref{eqn:shift_gamma}, we have that $|m'_{1}(s_1) - m'_{1}(s'_1)| \le c_1$
for each $s_1,s'_1 \in S_1$. 
In addition, observe that
\begin{align*}
\bE[M'_{1} \mid \scr{F}_0] &=  \bE[ \, (\bE[ M_1 \mid X_1 \in A] +  \gamma_A)\cdot \bm{1}_{[X_1 \in A]}] + \mb{E}[ (M_1 - \gamma_B) \cdot \bm{1}_{[X_1 \in B]}] \\
                           &=  \bE[M_1 \cdot \bm{1}_{[X_1 \in A]}] + \gamma_A \mb{P}[ X_1 \in A]  + \mb{E}[M_1 \cdot \bm{1}_{[X_1 \in B]}] - \gamma_B \mb{P}[X_1 \in B]\\
                           &= \mb{E}[M_1 \cdot (\bm{1}_{[X_1 \in A]} + \bm{1}_{[X_1 \in B]})] + \gamma - \gamma \\
                           &= \mb{E}[M_1] = M_0,
\end{align*}
where the last line follows from the martingale property of $M_1$.
Thus, $\bE[M'_{1} \mid \scr{F}_0] = M'_0$, and so $M'_1$ also satisfies the martingale property.

We now construct $(M'_j)_{j=2}^k$ and verify the remaining properties. For each $j \in [k]$, let
\[
Y_j := (\bE[ M_1  \mid X_1 \in A] + \gamma_A) \cdot \bm{1}_{[X_1 \in A]} + (M_j - \gamma_B) \cdot \bm{1}_{[X_1 \in B]}.
\]
(Note that $Y_1 = M'_1$). We claim that $Y = (Y_j)_{j=1}^k$ is a martingale with respect to $(X_j)_{j=1}^k$.
In order to see this, fix $2 \le j \le k$, and take the conditional expectation with respect
to $\scr{F}_{j-1}$:
\begin{align*}
    \bE[ Y_j \mid \scr{F}_{j-1}] &= \bE[ (M_j - \gamma_B) \cdot \bm{1}_{[X_1 \in B]} \mid \scr{F}_{j-1}] + \bE[ \, (\bE[ M_1 \mid X_1 \in A] + \gamma_A)\cdot \bm{1}_{[X_1 \in A]} \mid \scr{F}_{j-1}] \\
                                &= \bE[ (M_j - \gamma_B)  \mid \scr{F}_{j-1}] \cdot \bm{1}_{[X_1 \in B]} +  (\bE[ M_1 \mid X_1 \in A] + \gamma_A) \cdot \bm{1}_{[X_1 \in A]} \\
                                &= (M_{j-1} - \gamma_B)  \cdot \bm{1}_{[X_1 \in B]} + (\bE[ M_1 \mid X_1 \in A] + \gamma_A) \cdot \bm{1}_{[X_1 \in A]} =:Y_{j-1}.
\end{align*}
The first equality follows since the random variables $\bm{1}_{[X_1 \in B]}, \bm{1}_{[X_1 \in A]}$ and $\bE[ M_1 + \gamma_A \mid X_1 \in A]$ are determined
by $X_1, \ldots , X_{j-1}$ (and thus can be viewed as constants), and the second uses the martingale property of $M$.

Let $\Gamma_Y$ be the stable elements of $S_1 \times \dots \times S_k$ with respect
to $Y$ and $c$. By applying the inductive assumption to $Y$, we get a martingale which can be coupled
with $Y$, and whose initial term is $Y_1$. Since $Y_1 = M'_1$, we can denote this martingale unambiguously 
by $(M'_j)_{j=1}^k$. Observe that it has the following properties:
\begin{enumerate}
    \item $(M'_j)_{j=1}^k$ is balanced with respect to $c_2,\dots, c_k$. \label{prop:balanced_increments_ind}
    \item If $(X_1, \ldots , X_k) \in \Gamma_Y$, then $M'_j \le Y_j$ for $j=1, \ldots ,k$. \label{prop:domination_ind}
\end{enumerate}
We claim that $M' = (M'_j)_{j=0}^k$ is a martingale which satisfies properties \ref{prop:initial_values}, \ref{prop:balanced_increments},
and \ref{prop:domination}. We prove these statements in order.

We have already verified that $\bE[M'_1 \mid \scr{F}_0] = M'_{0}$.
Moreover, $(M'_j)_{j=1}^k$ satisfies the martingale property by the inductive assumption.
Thus, $M' = (M'_j)_{j=0}^k$ is a martingale with respect to $(X_j)_{j=0}^k$.

By construction, $M'_0 = M_0$, and so \ref{prop:initial_values} holds.
Now, $M'_2, \ldots M'_k$ are balanced by \eqref{prop:balanced_increments_ind}, 
and we have already verified that $M'_1$ is balanced.
Thus, $M'$ satisfies \ref{prop:balanced_increments}.
It remains to verify \ref{prop:domination}.

\CM{We shall first show that if $(X_0,X_1,, \ldots ,X_k) \in \Gamma_M$, then $(X_1, \ldots ,X_k) \in \Gamma_Y$.
Now, by the definition of $\Gamma_M$, we have that for each $1 \le j \le k$ and $s'_j \in \supp(S_j)$, 
\begin{equation} \label{eqn:gamma_m}
    m_{j}(X_1, \ldots ,X_{j-1},s'_j) - m_{j}(X_1, \ldots ,X_j) \le c_j.
\end{equation}
It suffices to show that $y_{j}(X_1, \ldots ,X_{j-1},s'_j) - y_{j}(X_1, \ldots ,X_j) \le c_j$,
where $y_{j}$ is the function which satisfies $y_{j}(X_1, \ldots ,X_j) = Y_j$.
Observe that if $X_1 \in A$, then $y_{j}(X_1, \ldots ,X_{j-1},s'_j) = y_{j}(X_1, \ldots ,X_j)$. Otherwise, if $X_1 \in B$,
then
\[
y_{j}(X_1, \ldots ,X_{j-1},s'_j) - y_{j}(X_1, \ldots ,X_j) = m_{j}(X_1, \ldots ,X_{j-1}, s'_j) - m_{j}(X_1, \ldots ,X_j) \le c_j,
\]
where the inequality follows from \eqref{eqn:gamma_m}. Thus, $(X_1, \ldots ,X_k) \in \Gamma_Y$, and so
by the inductive assumption, $M'_j \le Y_j$ for $j=1, \ldots ,k$. On the other hand,
since $(X_1, \ldots, X_k) \in \Gamma_M$, $m_{1}(X_1) \ge m_{1}(b') - c_1$ for all $b' \in S_1$.
Thus, $X_1$ is large (i.e., $X_1 \in B$). It follows
that $Y_j = M_j - \gamma_B \le M_j$, and so $M'_j \le M_j$ for $j=1, \ldots ,k$,
which proves that \ref{prop:domination} holds.}

\CM{
To complete the inductive step, we must handle the case when $X_0$ is not necessarily constant. We can handle
this by applying the above martingale construction to $(m_{j}(s_0, X_1 \ldots ,X_j))_{j=0}^{k}$
for each $s_0 \in \supp(X_0)$. The proof is thus complete.
}

\end{proof}
\begin{proof}[Proof of Theorem \ref{typical}]
Fix $t \ge 0$, and let $M' = (M'_j)_{j=0}^k$ be the martingale with respect to $(X_j)_{j=1}^k$ guaranteed
by Lemma \ref{lem:martingale_coupling}. Now, $M'$ is balanced, and so $|M'_j - M'_{j-1}| \le c_j$
for each $j \in [k]$ by Proposition \ref{prop:balanced_implies_bounded}.
Thus, we can apply can apply the (one-sided) Azuma-Hoeffding inequality to ensure
that
\[
\Pr[M'_k \le M_0 - t] \le \exp\left(\frac{-2 t^2}{\sum_{j= 1}^{k} c_j^2}\right),
\]
where we have used that $M'_0 = M_0$.
Returning to $M$, observe that
\[
\Pr[M_k \le M_0 - t] \le \Pr[\text{$M_k \le M_0 -t$ and $X \in \Gamma$}] + \Pr[ X \not \in \Gamma].
\]
Moreover, if $X \in \Gamma$, then $M'_k \le M_k$. Thus,
$\Pr[\text{$M_k \le M_0 -t$ and $X \in \Gamma$}] \le \Pr[M'_k \le M_0 -t]$, 
and so the theorem follows after combining the above equations.
\end{proof}

\section{Proving Theorem \ref{mainsemirandom}}  \label{sec:proof_semi_random}
As an application of Theorem \ref{stealing}, we prove that the properties $\cM$ and $\cH$ admit sharp thresholds in the semi-random graph process. 
In order to prove this, we first establish the existence of sharp thresholds for the \textit{approximate} properties $\cM'$ and $\cH'$, and then we transfer
these thresholds to $\cM$ and $\cH$, respectively. \CM{Note that many of the definitions (respectively, lemmas)
we introduce (respectively, prove) in this section apply to the $\scr{D}$-process.
While our main application is in proving \Cref{mainsemirandom} -- which \textit{is} specific to the semi-random graph process -- we develop our techniques for the $\scr{D}$-process whenever possible.}

Suppose we are given a property $\cP$ in the $\scr{D}$-process, and $m:= m_{\scr{P}}(1/2, n)$. We say that $\cP'$ is an \textbf{approximate property} of $\cP$ if $\cP\subseteq \cP'$,
and for any $G_0\in \cP'$, we can play the $\scr{D}$-process starting with $G_0$ and obtain a graph in $\cP$ in $o(m)$ steps w.h.p.
\begin{lemma}\label{transferapprox}
Let $\cP$ be a property and let $\cP'$ be an approximate property of $\cP$ in the $\scr{D}$-process. If $m^*$ is a sharp threshold for $\cP'$, then $m^*$ is also a sharp threshold for $\cP$ (in the $\scr{D}$-process).
\end{lemma}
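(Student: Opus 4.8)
\textbf{Proof proposal for Lemma \ref{transferapprox}.}

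The plan is to verify the two conditions in the definition of a sharp threshold for $\cP$, assuming $m^*$ is a sharp threshold for the approximate property $\cP'$. Recall that $\cP \subseteq \cP'$, and that from any $G_0 \in \cP'$ one can reach a graph in $\cP$ within $o(m)$ steps w.h.p., where $m = m_{\cP}(1/2,n)$. A subtlety to address first is that the definition of ``approximate property'' is stated in terms of $m = m_{\cP}(1/2,n)$, while the sharp threshold $m^*$ of $\cP'$ is a priori $m_{\cP'}(1/2,n)$; since $\cP \subseteq \cP'$ we immediately have $m^* = m_{\cP'}(1/2,n) \le m_{\cP}(1/2,n) = m$, so the ``clean-up'' phase costs $o(m)$ steps, which is at most $o(m)$ and hence (after we show $m = \Theta(m^*)$, or directly) negligible compared to $m^*$. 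I would make this comparison explicit at the start: combining the clean-up bound with the lower bound direction below will in fact force $m = (1+o(1))m^*$, so $o(m) = o(m^*)$ and the two normalizations agree asymptotically.

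First I would prove the \emph{upper bound} (existence of a fast strategy for $\cP$). Fix $\eps > 0$. Since $m^*$ is a sharp threshold for $\cP'$, there is a strategy $\cS'_n$ with $\Pr[T_{\cS'_n} \le (1+\eps/2)m^*] = 1-o(1)$, i.e., after $(1+\eps/2)m^*$ steps the graph lies in $\cP'$ w.h.p. Now append the clean-up procedure from the definition of approximate property: starting from this graph in $\cP'$, an additional $o(m) = o(m^*)$ steps suffice to reach a graph in $\cP$ w.h.p. Concatenating the two strategies yields a strategy for $\cP$ that succeeds within $(1+\eps/2)m^* + o(m^*) \le (1+\eps)m^*$ steps w.h.p. (for $n$ large), establishing condition (1) with threshold $m^*$.

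Next the \emph{lower bound}: every strategy $\cS_n$ for $\cP$ satisfies $\Pr[T_{\cS_n} \le (1-\eps)m^*] = o(1)$. This is immediate from $\cP \subseteq \cP'$: any graph satisfying $\cP$ also satisfies $\cP'$, so $T_{\cS_n}^{\cP} \ge T_{\cS_n}^{\cP'}$ for the same strategy and the same randomness, hence $\Pr[T_{\cS_n}^{\cP} \le (1-\eps)m^*] \le \Pr[T_{\cS_n}^{\cP'} \le (1-\eps)m^*] = o(1)$, using that $m^*$ is a sharp threshold for $\cP'$ (its lower-bound clause applies to \emph{every} strategy). I do not anticipate a real obstacle here; the only point requiring care is the bookkeeping around the two possibly-different values $m_{\cP}(1/2,n)$ and $m_{\cP'}(1/2,n)$ and confirming that the clean-up cost $o(m)$ is genuinely $o(m^*)$ — which follows once the upper bound above shows $m_{\cP}(1/2,n) \le (1+o(1))m^*$, and the lower bound shows $m_{\cP}(1/2,n) \ge (1-o(1))m^*$, so $m = (1+o(1))m^*$ and the asymptotic notations are interchangeable.
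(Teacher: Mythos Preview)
Your proposal is correct and follows essentially the same approach as the paper: the lower bound via $\cP \subseteq \cP'$ (so $T^{\cP} \ge T^{\cP'}$), and the upper bound by running a $(1+\eps/2)m^*$-step strategy for $\cP'$ followed by the $o(m)$-step clean-up. The paper handles the $m$ versus $m^*$ bookkeeping in the same way you outline---deducing $m = (1+o(1))m^*$ from the two bounds so that $o(m) = o(m^*)$---though it presents the lower bound first, which makes the deduction slightly cleaner since the inequality $m \le (1+\eps/2)m^* + o(m)$ from the upper-bound construction already rearranges to $m = O(m^*)$ without circularity. One small inaccuracy: you identify $m^*$ with $m_{\cP'}(1/2,n)$, but the lemma only assumes $m^*$ is \emph{a} sharp threshold for $\cP'$; this is harmless since any two sharp thresholds agree up to a $(1+o(1))$ factor, but strictly speaking the inequality $m^* \le m$ you assert need not hold exactly.
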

	\begin{proof}[Proof of Lemma \ref{transferapprox}]
Recall that $m:= m_{\scr{P}}(1/2,n)$ is the minimum number of steps
 needed to ensure that $\scr{P}$ is satisfied with probability at least $1/2$ in the $\scr{D}$-process.
		For any strategy $\cS$, since $\scr{P} \subseteq \scr{P}'$, 
  we have that $T_{\cS,\cP'}\le T_{\cS, \cP}$. Therefore, for any constant $\eps>0$,
		\begin{equation}\label{transferlower}
		    \bP[ T_{\cS, \cP}\le (1-\eps) \st]\le \bP[ T_{\cS, \cP'}\le (1-\eps) \st]=o(1),
		\end{equation}
and so $m^*$ satisfies the second sharp threshold property for $\scr{P}$.
  
Now, by assumption there exists a strategy $\cS'$ for $\cP'$ such that  $\bP[ T_{\cS', \cP'}\le (1+\eps/2) \st]=1-o(1)$. Consider the strategy $\cS$ for $\cP$ that follows $\cS'$ until we obtain a graph in $\cP'$, and then obtains a graph in $\cP$ in $o(m)$ additional steps w.h.p. (this possible from the definition of approximate property). It follows that

  \begin{equation}\label{transferupper}
		     \bP[ T_{\cS, \cP}\le (1+\eps/2) \st+o(m)]\ge (1-o(1))\bP[ T_{\cS', \cP'}\le (1+\eps/2) \st]     =1-o(1).
		\end{equation}
Observe that \eqref{transferupper} \textit{almost} establishes $m^*$ satisfies the first sharp threshold property for $\scr{P}$, however we must control the $o(m)$ term (note that this term does \textit{not} depend on $\eps$). It suffices to show that $m = (1 + o(1))m^*$. In order to see this, observe that due to the definition of $m =m_{\scr{P}}(1/2,n)$, \eqref{transferlower} and \eqref{transferupper} imply
that for each $\eps > 0$,
\[
(1 - \eps) m^* \le (1 - o(1)) m\le (1 + \eps/2) m^*.
\]
Since this holds for each $\eps > 0$, $m = (1 + o(1)) m^*$ as required, and so the proof is complete.
\end{proof}

		Let $\cM'$ be the property of having a matching that contains at least $n-n^{0.99}$ vertices, and let $\cH'$ be the property of having a path of length at least $n-n^{0.99}$. 
When restricted to the semi-random graph process, the following ``clean-up'' algorithm results of Gao et. al. \cite{gao2021perfect,gao_fully_2022} show that $\cM'$ and $\cH'$ are approximate properties of $\cM$ and $\cH$, respectively. 
	\begin{lemma}[\cite{gao2021perfect}, and [Lemma 2.5, \cite{gao_fully_2022}]\label{cleanupmatching} \label{cleanuppath}
		Suppose $G_0$ is a graph with a matching (respectively, a path) that saturates $n-o(n)$ vertices. If we start the semi-random graph process with $G_0$, then there exists a strategy that constructs $G'\in \cM$ (respectively, $G' \in \cH$) in $o(n)$ steps \whp
	\end{lemma}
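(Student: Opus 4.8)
The plan is to prove both halves of the lemma by the same two-phase recipe, iterated a slowly growing number of times: a short \emph{sprinkling} phase that hands each ``deficient'' vertex a few fresh random edges, followed by a \emph{surgery} phase that uses those edges --- and no further rounds --- to patch the structure up.

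\emph{Matching case.} Let $M_0$ be a matching of $G_0$ missing a set $U_0$ with $k:=|U_0|=o(n)$ (discard one vertex if $n$ is odd, so $k$ is even), set $\beta:=n/k\to\infty$ and $d:=\lceil\sqrt{\beta\log\beta}\,\rceil$. In the \textbf{sprinkling phase} run the process for $kd=\sqrt{kn\log\beta}=n\sqrt{\log\beta/\beta}=o(n)$ rounds; whatever the square $u_t$ is, let the circle be the next vertex of $U_0$ in a fixed round-robin and add $(u_t,v_t)$ (if $u_t\in U_0$, add it straight to the matching and drop both endpoints instead --- this only helps). Since the squares are \uar\ and only $o(n)$ of them are used, each $a\in U_0$ ends with a set $N(a)$ of $d$ new neighbours that behaves, up to negligibly many coincidences, like a uniformly random $d$-subset of $[n]$, almost all of them $M_0$-saturated. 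In the \textbf{surgery phase} (no new rounds) form the auxiliary graph $H$ on $U_0$ with $a\sim_H a'$ iff some edge $(x,y)\in M_0$ has $x\in N(a)$, $y\in N(a')$: a fixed pair is an $H$-edge with probability $\Theta(d^2/n)=\Theta(\log\beta/k)$, so $H$ stochastically dominates (after a mild decorrelation of the $N(a)$'s) an Erd\H{o}s--R\'enyi graph $G(k,\Theta(\log\beta/k))$, which has average degree $\Theta(\log\beta)\to\infty$ and hence \whp\ a matching missing only $O(ke^{-\log\beta})=O(k/\beta)=O(k^2/n)$ vertices. Promote such a matching, greedily, to a family of \emph{vertex-disjoint} length-$3$ augmenting paths $a-x-y-a'$ in the current graph (with $(x,y)\in M_0$, $x\in N(a)$, $y\in N(a')$); by the pseudorandomness of the $N(a)$'s the greedy drops only $O(k^2/n)$ further paths to overlaps, and augmenting $M_0$ along all the survivors gives $G'\supseteq G_0$ with a matching missing only $O(k^2/n)$ vertices. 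Now \textbf{iterate}: one pass replaces the deficiency $k$ by $O(k^2/n)$, i.e.\ it squares the \emph{relative} deficiency $k/n$, so finitely or slowly many passes drive it to $0$; since the later passes use $o(\sqrt n)$ rounds or fewer, the total is dominated by the first pass and is $o(n)$, and a union bound over passes keeps the failure probability $o(1)$.

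\emph{Path/cycle case.} The same philosophy applies, with vertex insertions in place of augmenting paths: an analogous short sprinkling phase equips each of the $k=o(n)$ off-path vertices with a small random neighbourhood in $o(n)$ rounds, and then the off-path vertices are absorbed one at a time by the P\'osa rotation--extension (``booster'') technique --- whenever the current path does not extend at a deficient vertex $a$, a $\Omega(n)$-sized set of endpoints is reachable by rotations along the present edges, so \whp\ some sprinkled neighbour of $a$ is a reachable endpoint and the path grows through $a$; once all off-path vertices lie on the path, $O(\log n)$ more rotations and one more sprinkled edge close it into a Hamilton cycle. Iterating as before finishes. This is exactly the rotation machinery of Ben-Eliezer et al.\ and Gao et al., and is where essentially all the work of this case lives.

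\emph{Main obstacle.} Everything hinges on \emph{sublinearity}. The naive ``wait until the random square lands on a still-deficient vertex, then match or insert it'' costs $\sum_{j\le k}n/j=\Theta(n\log k)$ rounds, which is \emph{not} $o(n)$; the fix has two parts, each needing genuine justification. First, make \emph{every} round productive by spending the player's free choice of circle to deposit a random edge at a deficient vertex. Second, show that once each deficient vertex carries merely $\omega(\sqrt{n/k})$ such edges, a \emph{complete} disjoint augmenting-path system (matching case), respectively a successful rotation-extension (path case), is already present in the graph, so the surgery needs no further rounds. Justifying the second point calls for the pseudorandomness of the sprinkled neighbourhoods, the perfect-matching/Hall threshold for $H$ (matching case) or the density-of-boosters estimate (path case), and the squaring iteration to handle an arbitrary $o(n)$ deficiency. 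The path case is the more delicate: inserting a vertex requires \emph{two} edges landing on a \emph{consecutive} pair of the path, which is precisely why one must fall back on P\'osa rotations rather than a one-shot slot-finding argument.
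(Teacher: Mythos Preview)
The paper does not prove this lemma; it is imported wholesale from \cite{gao2021perfect} and \cite{gao_fully_2022} (the quantitative forms in Appendix~\ref{sec:clean_up_algorithms} are likewise just restated, not reproved). So your write-up is an independent attempt at those cited results. The matching half is sound in outline and matches the spirit of the $O(\sqrt{\eps}\,n)$ argument in \cite{gao2021perfect}: route every circle to an unsaturated vertex, build the auxiliary graph $H$ on $U_0$ whose edges encode length-$3$ augmenting paths through $M_0$, extract a near-perfect matching from $H$, and iterate to square the relative deficiency. The informal pieces (correlations among $H$-edges, greedy disjointification, the small-$k$ endgame, the union bound over passes) all need care but are fillable.

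The path half, however, has a genuine gap. With the sprinkling you describe, every new edge has its \emph{circle} endpoint at an off-path vertex, so the only edges created run between off-path vertices and (random) path vertices; no new edges join two path vertices. A P\'osa rotation from an endpoint $p$ requires an edge from $p$ to an \emph{interior} vertex of the current path. Before any absorption, $p$ is an original path vertex and has no such edge beyond its path-successor, so the rotation tree is trivial; after absorbing a single off-path vertex $a$ as the new endpoint, one layer of rotation via $a$'s $d$ sprinkled edges is possible, but the resulting endpoints are again original path vertices with no internal edges, and the tree dies at depth one. Hence the set of reachable endpoints has size $O(d)$, not $\Omega(n)$, and since a typical path vertex is adjacent to some off-path vertex only with probability $kd/n=o(1)$, the extend--rotate loop stalls. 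Your final paragraph correctly identifies the obstruction (insertion needs two sprinkled edges at a consecutive pair), but the fallback ``this is exactly the rotation machinery of Gao et al.'' is circular: that machinery \emph{is} Lemma~2.5 of \cite{gao_fully_2022}. The actual proof must also spend rounds creating path-internal edges (note the extra $n^{3/4}\log^2 n$ term in Lemma~\ref{lem:clean_up_hamiltonian_long}), and the interplay between the two kinds of rounds is precisely the missing content.
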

\begin{remark}
    We state the quantitative versions of the clean-up algorithms in Appendix \ref{sec:clean_up_algorithms}, as these will be useful in the second part of the proof of Theorem \ref{mainsemirandom}.
\end{remark}
	
It is not difficult to show that in the semi-random graph process, $\cM'$ and $\cH'$ are edge-replaceable. Thus, \Cref{stealing} and \Cref{transferapprox} together imply that $\cM$ and $\cH$ admit sharp thresholds (thus proving the first part of Theorem \ref{mainsemirandom}). To prove the second part of Theorem \ref{mainsemirandom}, it remains to show that there is a sharp threshold of the form $C_{\cP} n$ for both properties. We prove this via an analytic argument in Sections \ref{linearthreshold}, \ref{provingmain}, and \ref{sec:fekete_lemma}. 


In Section \ref{sec:coarse}, we show that non-trivial local properties do \textit{not} admit sharp thresholds in the semi-random graph process. Since Theorem \ref{mainsemirandom} confirms
that two of the most extensively studied global properties admit sharp thresholds, our results suggest that the dichotomy between thresholds for local and global properties that Friedgut \cite{friedgut1999sharp} observed for the Erdős–Rényi random graph also applies to the semi-random graph process.
	
\subsection{Linear Function as a Sharp Threshold}\label{linearthreshold}
\CM{In this subsection, all of our results apply in the full generality of the $\scr{D}$-process.}	
For a given property $\cP$, let $I_n(\cP):=I_n=\min_{\cS_n} \bE \tsn$ where $\cS_n$ is taken over all possible strategies. We focus on properties $\cP$ with $I_n=\Theta(n)$, which we refer to as \textbf{linear} (in $n$). The restriction to the linear regime is a typical feature of results which guarantee the existence of certain limits (see, for example, the interpolation method in \cite{bayati2010combinatorial}). \CM{Recall that \Cref{stealing} ensures that if $\scr{P}$ is edge-replaceable, then $\scr{P}$ has a sharp threshold. By imposing additional analytic conditions on $(\frac{I_n}{n})_{n \ge 1}$,
we can prove the existence of some constant $C>0$ such that $Cn$ is a sharp threshold for $\cP$.}

	\begin{thm} \label{scalinglimit}
		Let $\alpha>0$ and $\delta \in (0,1)$ be constants. Suppose
  that $\cP$ is linear, $n^{\alpha}$-edge-replaceable property satisfying the following conditions for all $n$ sufficiently large: 
		\begin{enumerate}
		    \item \label{item:one_step_expectation} $|I_n-I_{n+1}|<n^\delta$.

            \item \label{item:submax} For all $i \in [n]$ such that $\min(i,n-i)\ge n^{\delta}$,
            \begin{equation*}
		\frac{I_n}{n}\le \max\lrpar{\frac{I_i}{i},\frac{I_{n-i}}{n-i}}+O(n^{\delta-1}). 
		\end{equation*}
		 \end{enumerate}
		Then the limit 
		\begin{equation*}
		\lim_{n\to\infty} \frac{I_n}{n}=:C 
		\end{equation*} 
		exists. Moreover, $Cn$ is a sharp threshold for $\cP$.
	\end{thm}

 In order to prove \Cref{scalinglimit}, we first show that the limit $\lim_{n\to\infty} \frac{I_n}{n}$ exists. We prove a lemma which applies
     to an arbitrary sequence $(a_n)_n$ of reals which is bounded and which satisfies the analogous conditions \ref{item:one_step_expectation}. and \ref{item:submax}. from \Cref{scalinglimit}. 	
	\begin{lemma}\label{lem:fekete_technical}
		Let $\delta\in(0,1)$ be a constant. Suppose that $(a_n)_{n}$ is a bounded sequence of real numbers which satisfies the following conditions for all $n$ sufficiently large:
		\begin{equation}\label{consecutivea}
		|na_n- (n+1)a_{n+1}|\le n^{\delta},
		\end{equation}
        and
        		\begin{equation}\label{submaxa}
		a_n\le \max(a_i,a_{n-i})+O(n^{-\delta})
		\end{equation}
		whenever $\min(i,n-i)\ge n^{1-\delta}$.
		Then $\lim_{n\to\infty} a_n$ exists.
	\end{lemma}
	We remark the similarity between \Cref{lem:fekete_technical} and Fekete's lemma \cite{de1952some}, the latter of which states that if for all $i<n$, \[a_n\le \frac{i}{n} a_i+\frac{n-i}{n} a_{n-i},\] then the limit $\lim_{n\to\infty} a_n$ exists. Indeed the proof of both results are quite similar, and so we defer the proof of \Cref{lem:fekete_technical} to \Cref{sec:fekete_lemma}.

 Before we prove \Cref{scalinglimit}, we require one more technical lemma. Roughly speaking, we shall prove that if $\scr{P}$
 is edge-replaceable, then there exists a strategy which does nearly as well as $I_n$ with polynomially small failure probability.  
	\begin{lemma}\label{concstrat}
		Suppose $\cP$ is a linear $n^{\alpha}$-edge-replaceable property for some fixed $\alpha>0$. Then there exists a strategy $\Sc_n$ such that 
		\[\Pr(T_{\Sc_n}>I_n+n^{1 - \alpha/4})=O(n^{-\alpha/4}).\]
	\end{lemma}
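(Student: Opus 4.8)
The plan is to bootstrap the ``small boost'' mechanism behind Lemma~\ref{lem:smallexplosion}, but to iterate it a \emph{polynomial} (not constant) number of times, starting from an extremely weak base strategy obtained from Markov's inequality. Fix a strategy $\cS^{\opt}$ with $\bE T_{\cS^{\opt}} \le I_n + 1$. Since $\cP$ is linear we have $\ms = \Theta(n)$: Markov gives $\ms \le 2(I_n+1) = O(n)$, while if $\ms = o(n)$ then stacking a strategy witnessing $\ms$ on fresh rounds (the multi-round exposure of Proposition~\ref{prop:multi_round}, using monotonicity of $\cP$) would give a strategy with expected stopping time $O(\ms) = o(n)$, contradicting linearity. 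For the base case, Markov also yields $\Pr\!\lrpar{ T_{\cS^{\opt}} \le I_n + n^{\delta_1}/2 } \ge 1 - \tfrac{I_n+1}{I_n+n^{\delta_1}/2} = \Omega(n^{\delta_1-1})$, so $\m(\theta_0,n) \le I_n + n^{\delta_1}/2$ for some $\theta_0 = \Theta(n^{\delta_1-1})$.

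Next, iterate the boost: set $\theta_{i+1} := \theta_i + \theta_i(1-\theta_i)^3/(C_1\sqrt{\ms})$ for an absolute constant $C_1$, so that an extended form of Lemma~\ref{lem:smallexplosion} gives $\m(\theta_{i+1},n) - \m(\theta_i,n) \le O(\log n)\cdot\sqrt{\ms}/\omega$ (the $\log n$ factor is explained below). The number $K$ of iterations needed to reach $\theta_K = 1 - O(n^{\delta_1-1})$ splits into two regimes. While $\theta_i \le 1/2$ the recursion is multiplicative, $\theta_{i+1} \ge \theta_i(1+\Omega(1/\sqrt{\ms}))$, so $O(\sqrt{\ms}\log n)$ iterations raise $\theta_i$ from $\Theta(n^{\delta_1-1})$ to $1/2$; once $g_i := 1-\theta_i \le 1/2$ one checks $g_{i+1}^{-2} - g_i^{-2} = \Theta(1/\sqrt{\ms})$, so pushing $g_i$ down to $\Theta(n^{\delta_1-1})$ costs a further $K = \Theta(\sqrt{\ms}\,n^{2(1-\delta_1)})$ iterations. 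The total extra cost is $K \cdot O(\log n)\sqrt{\ms}/\omega = O(n^{3-2\delta_1-\alpha}\log n)$, using $\ms = \Theta(n)$ and $\omega = n^{\alpha}$. Picking $\delta_1 \in (0,1)$ with $\delta_1 > 1 - \alpha/3$ (and any $\delta_1 \in (0,1)$ if $\alpha \ge 3$) makes this at most $n^{\delta_1}/2$ for large $n$, so $\m(\theta_K,n) \le I_n + n^{\delta_1}$ with $\theta_K = 1 - O(n^{\delta_1-1})$; taking $\Sc_n$ to be any strategy witnessing $\m(\theta_K,n)$ gives $\Pr(T_{\Sc_n} > I_n + n^{\delta_1}) \le 1-\theta_K = O(n^{\delta_1-1})$.

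The step I expect to be the main obstacle is justifying the extended form of Lemma~\ref{lem:smallexplosion}: as stated it only covers target probabilities in a fixed compact subinterval of $(0,1)$, since the $1 - o(1/\sqrt{\ms})$ factor inherited from the edge-replacement coupling of Lemma~\ref{lem:free_move_coupling_short} has to be absorbed into a gain of order $(1-\theta)^3/\sqrt{\ms}$, which breaks once $1-\theta$ is polynomially small. To fix this I would first strengthen Lemma~\ref{lem:free_move_coupling_short}: rather than running the edge-replacement procedure once (for $\sqrt{\ms}/\omega$ steps), run it $r = \Theta(\log n)$ times on disjoint, fresh batches of rounds --- this is legitimate because after a failed attempt the current graph $G''$ still satisfies $G'' + e \in \cP$ by monotonicity, hence is once more ``a graph of $\cP$ with one edge deleted'', so the edge-replacement procedure reapplies; as each attempt fails with probability $o(1/\sqrt{\ms}) \le \tfrac12$, after $r$ attempts the failure probability is below any prescribed negative power of $n$, at a cost of $O(\log n)\cdot\sqrt{\ms}/\omega$ steps. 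With this upgraded coupling the proof of Lemma~\ref{lem:smallexplosion} --- which otherwise only invokes Proposition~\ref{prop:multi_round}, Lemma~\ref{lem:free_move_guarantee} and Lemma~\ref{lem:quantify_boost}, none of which needs $\theta$ bounded away from $1$ --- goes through uniformly for all $\theta$ with $1-\theta \ge n^{\delta_1-1}$. The residual work is bookkeeping: ensuring $K\cdot O(\log n)\sqrt{\ms}/\omega \le n^{\delta_1}/2$ while simultaneously $1-\theta_K \le n^{\delta_1-1}$, which is precisely what forces the constraint $\delta_1 > 1-\alpha/3$.
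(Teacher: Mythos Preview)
Your approach is correct but takes a genuinely different route from the paper. The paper's proof is self-contained and does \emph{not} iterate the small-boost machinery: it takes the expectation-optimal strategy $\Sc_n$, considers the Doob martingale $Z_j=\bE[T\mid \cF_j]$ of the stopping time $T$ itself (not of an indicator), and proves that the increments are $O(n^{1/2-\alpha})$-bounded. The increment bound comes from a single use of edge-replaceability: if the $j$th square were $u_j'$ instead of $u_j$, one can ``pretend'' it was $u_j'$, follow the optimal strategy until a graph one edge away from $\cP$ is reached, then pay $O(n^{1/2-\alpha})$ expected steps to repair; optimality of $\Sc_n$ then converts this into the Lipschitz bound. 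Azuma--Hoeffding on $Z_j$ for $\beta n$ steps, combined with Markov to control $\Pr(T>\beta n)$, yields the result directly with $\delta_1=1-\alpha/4$.

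Your route---bootstrap from Markov to get $\theta_0=\Theta(n^{\delta_1-1})$, then iterate an extended Lemma~\ref{lem:smallexplosion} polynomially many times---is precisely the alternative the authors allude to in the remark following Lemma~\ref{concstrat} (``Lemma~\ref{concstrat} can be proved by a very careful refinement of the argument in Section~\ref{sec:main} \ldots we will have to allow $\theta$ to approach $1$, which leads to more tedious argument''). Your key technical fix---repeating the edge-replacement procedure on fresh batches to drive the coupling error from $o(1/\sqrt{\ms})$ down to $n^{-c}$ for any fixed $c$---is exactly what is needed to make the last inequality in the proof of Lemma~\ref{lem:smallexplosion} survive when $(1-\theta)^3$ is polynomially small; the monotonicity justification for re-running the procedure after a failure is sound. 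Your iteration count in the $g_i:=1-\theta_i$ regime via $g_{i+1}^{-2}-g_i^{-2}=\Theta(1/\sqrt{\ms})$ is correct and yields the constraint $\delta_1>1-\alpha/3$, slightly better than the paper's $\delta_1=1-\alpha/4$. The trade-off is that the paper's argument is shorter and does not depend on the earlier section, whereas yours recycles existing machinery at the cost of the bookkeeping you describe.
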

		\begin{remark}
	    Lemma \ref{concstrat} can be proven by a very careful refinement of Theorem \ref{stealing} from Section \ref{sec:main}. In particular, one would have to allow $\theta_1$ and $\theta_2$ to depend on $n$ and approach $0$ and $1$, respectively, sufficiently fast as $n \rightarrow \infty$. We instead opt for a self-contained proof that is simpler. Note that the argument is similar to the proof of \Cref{lem:free_move_coupling_short}.
	\end{remark}


\begin{proof}[Proof of \Cref{concstrat}]
Observe first that the $n^{\alpha}$-edge-replacement procedure guaranteed by the lemma succeeds with probability $1 - o(1/\sqrt{n})$,
and runs for $O(n^{1/2 - \alpha})$ steps. By executing this procedure multiple times independently until the first successful run occurs, we get
an edge-replacement procedure which takes $O(n^{1/2 - \alpha})$ steps in expectation, yet which succeeds with probability $1$. We assume we are working with such a procedure in what follows.  Let us refer to a graph $G'$ on $[n]$ as \textbf{edge-completable}, provided $G' + e \in \scr{P}$ for some edge $e$. 

		Let $\Sc =\Sc_n$ be a strategy on $[n]$ that minimizes expected number of steps needed to satisfy $\cP$ in the $\scr{D}$-process. That is, if $T:= T_{\Sc}$, then $\mb{E}T = I_n$. Observe that for each $t \ge 1$, $\Sc$ selects $Y_t \in X_t$ which minimizes the expected number of steps to satisfy $\scr{P}$, conditional on $X_t$, and the current graph $G_{t-1}$ Thus, we can assume that $\Sc$ is deterministic without loss of generality.


	Let $(X_i)_{i=1}^{\infty}$ be the sequence of random subsets the player receives. For convenience, define $\cH_j :=\sigma(X_1,\dots, X_j)$ for each $j \ge 1$, and let \CM{$\scr{H}_0$ be the trivial sigma-algebra.} Consider the Doob martingale $Z_j:=\bE[T \mid \cH_j]$ for each $j \ge 0$. We shall first prove that
  that $|Z_{j}-Z_{j-1}|= O(n^{1/2-\alpha})$ for each $j \ge 1$. In order to show this, it suffices to argue that for \textit{any} $s_j, s_j'\in \supp(\scr{D})$,
  \begin{equation} \label{eqn:balanced_easy}
      \mb{E}[T \mid (X_i)_{i < j}, X_{j} =s_j] - \mb{E}[ T \mid (X_i)_{i<j}, X_j = s'_j] = O(n^{1/2 - \alpha}).
  \end{equation}
Observe first that since $\Sc$ is deterministic, the event $\{T < j\}$ is $\scr{H}_{j-1}$-measureable. Moreover,
if $T < j$, then $\mb{E}[T \mid (X_i)_{i < j}, X_{j} =s_j] = \mb{E}[ T \mid (X_i)_{i<j}, X_j = s'_j]$, so clearly
\eqref{eqn:balanced_easy} holds when $T < j$.

It remains to prove \eqref{eqn:balanced_easy} when $T \ge j$.
Our approach is to consider a ``stolen" strategy $\scr{S}' = \scr{S}_n'$ for $\scr{P}$ defined on the same sequence $(X_i)_{i=1}^{\infty}$, and which constructs graphs $(G'_i)_{i=0}^{\infty}$. We design the strategy so that its ``winning time'' $T':=T_{\scr{S}'}$ is easily compared to $\mb{E}[ T \mid (X_i)_{i<j}, X_j = s'_j]$, which will allow
us to prove \eqref{eqn:balanced_easy}.


In each step $i \ge 1$, $\scr{S}'$ selects $Y'_i \in X_i$ as follows: If $i \le j-1$, $\scr{S}'$ chooses $Y'_i \in X_i$ in the same way as $\Sc$. If $i = j$, $\scr{S}'$
passes on the round (does not select an edge). Else if $i > j$, then there are two cases to consider: While $G'_{i-1}$
is \textit{not} edge-completable,  $\scr{S}'$ ``pretends'' that $X_j = s'_j$ when copying the decisions of $\Sc$. That is, $Y'_i \in X_i$ is chosen in the same way as
$\Sc$ when given $(X_1, \ldots , s'_j, \ldots X_i)$. Once $G'_{i-1}$ becomes edge-completable, $\scr{S}'$ executes the edge-replacement procedure and returns the resulting graph.

First observe that since $\scr{S}'$ always passes on step $j$, 
\begin{equation} \label{eqn:flip_subset}
\bE[T' \mid (X_i)_{i < j}, X_j = s_j] = \bE[T' \mid (X_i)_{i< j}, X_j=s_j'].
\end{equation}
Moreover, we claim that
\begin{equation} \label{eqn:expected_conditional}
\bE[T' \mid (X_i)_{i< j}, X_j=s_j'] \cdot \bm{1}_{[T \ge j]} = \bE[T \mid (X_i)_{i< j}, X_j=s_j'] \cdot \bm{1}_{[T \ge j]} + O(n^{1/2 -\alpha}).
\end{equation}

In order to prove \eqref{eqn:expected_conditional}, we first observe that if $X_j = s'_j$ and $T \ge j$, then $G'_T$ is edge-completable. 
To see this, note that if $X_j = s'_j$ and $T \ge j$, then we know that $\Sc$ and $\scr{S}'$ made decisions in the
same way up until step $T$, except for when $\scr{S}'$ passed on step $j$. Thus, $G'_T$
has exactly one fewer edge than $G_T$, the graph constructed by $\Sc$ after $T$ steps. Since $G_T \in \scr{P}$, $G'_T$ must be edge-completable. 

On the other hand, if $G'_T$ is edge-completable, then the definition of $\scr{S}'$ implies
that $T' \le T + R$, where $R$ is the number of steps used by the edge-replacement procedure when executed on $G'_T$. Combined with the previous paragraph, we get the following inequality: if $X_j = s'_j$ and $T \ge j$,
then $T' \le T + R$. Since $\{T \ge j\}$ is $\scr{H}_{j-1}$-measurable (here $\scr{H}_{j-1}=\sigma(X_1,\dots, X_{j-1})$), we
can apply conditional expectations to this inequality to conclude that
$$\bE[T' \mid (X_i)_{i< j}, X_j=s_j'] \cdot \bm{1}_{[T \ge j]} \le \bE[T + R\mid (X_i)_{i< j}, X_j=s_j'] \cdot \bm{1}_{[T \ge j]}.$$
But since $T \ge j$, the edge-replacement procedure begins after step $j$,
and so it uses edge subsets which are distinct from $(X_i)_{i \le j}$. Thus, $\mb{E}[R \mid (X_i)_{i< j}, X_j=s_j'] \cdot \bm{1}_{[T \ge j]}= O(n^{1/2-\alpha})$, and so \eqref{eqn:expected_conditional} holds.

To complete the proof of \eqref{eqn:balanced_easy} when $T \ge j$, observe that the optimality of $\Sc$ implies
that $\bE[T \mid (X_i)_{i < j}, X_j = s_j] \le \bE[T'\mid (X_i)_{i < j}, X_j = s_j]$. Thus, by applying \eqref{eqn:flip_subset} followed by \eqref{eqn:expected_conditional}, 
\begin{align*}
\bE[T \mid (X_i)_{i < j}, X_{j} = s_{j}] \cdot \bm{1}_{[T \ge j]} & \le \bE[T' \mid (X_i)_{i < j}, X_{j} = s_{j}] \cdot \bm{1}_{[T \ge j]}\\
                        &= \bE[T' \mid (X_i)_{i< j},X_j=s'_j] \cdot \bm{1}_{[T \ge j]} \\
                        &= \bE[T \mid (X_i)_{i< j}, X_j=s_j'] \cdot \bm{1}_{[T \ge j]} + O(n^{1/2 -\alpha}),
\end{align*}
which, after rearrangement, is precisely \eqref{eqn:balanced_easy} when $T \ge j$.

As we've now verified \eqref{eqn:balanced_easy} (for both cases of $T \le j$ and $T > j$), we know that
$|Z_{j} - Z_{j-1}| = O(n^{1/2 - \alpha})$ for each $j \ge 1$.
We next apply the Azuma-Hoeffding inequality to $(Z_i)_{i=0}^{\infty}$ to get that for any $\gamma>0, \beta>0$,
		\[\Pr(|Z_{\beta n}-\bE T|\ge \gamma \bE T)\le \exp\left(-\Theta\left(\frac{\gamma^2(\bE T)^2}{\beta n(n^{1/2-\alpha})^2}\right)\right)\le \exp\left(-\Theta\left(\frac{\gamma^2n^{2\alpha}}{\beta }\right)\right).\]
	\CM{Observe now that since $T$ is a stopping time with respect to $(\scr{H}_i)_{i \ge 0}$,
 the random variable $T \cdot \bm{1}_{[T \le \beta n]}$ is $\scr{H}_{\beta n}$-measureable. Thus, if $T \le \beta n$, then $Z_{\beta n} = T$.} By applying Markov's inequality to $T$, 
		\[\bP(T\neq Z_{\beta n})\le \bP(T>\beta n)\le  \frac{\bE T}{\beta n}=O(1/\beta).\]
		Combining the two previous equations, 
		\begin{equation*}
		\bP(|T-\bE T|\ge \gamma \bE T)\le  \bP(T\neq Z_{\beta n})+\Pr(|Z_{\beta n}-\bE T|\ge \gamma \bE T)\le O(1/\beta)+\exp\left(-\Theta\left(\frac{\gamma^2n^{2\alpha}}{\beta }\right)\right)
		\end{equation*}
		Now, let $\gamma=n^{-\alpha/4}, \beta=n^{\alpha/4}$. Since $\bE T= I_n$, we get that $\Pr(T>I_n+ n^{1-\alpha/4})=O(n^{-\alpha/4})$,
		which proves the theorem.
	\end{proof}

	\begin{proof}[Proof of Theorem \ref{scalinglimit}]
Let $a_n=I_n/n$ for all $n\ge 1$. Then by \Cref{lem:fekete_technical}, and the assumptions of the theorem, the limit $\lim_{n\to\infty} a_n=C$ exists. 
		
We will now show that $Cn$ is a sharp threshold for $\cP$. Fix an arbitrary constant $\eps>0$. Since $C = \lim_{n\to\infty} I_n/n$,
  we know that for $n$ sufficiently large, $Cn/I_n<(1+\eps)/(1+\eps/2)$. Thus, by \Cref{concstrat}, the strategy $\Sc =\Sc_n$ corresponding to $I_n$ satisfies 
		\[\Pr(T_{\Sc_n}>(1+\eps)Cn)\le  \Pr(T_{\Sc_n}>(1+\eps/2)I_n)=o(1),\]
		and so the first part of the sharp threshold definition is verified.

We now verify the second part of the sharp threshold definition.
		Let $\gamma>0$ be a constant (dependent on $\eps$) chosen sufficiently small such that
  \begin{equation} \label{eqn:gamma_small}
  (1-\eps)\frac{1 + \gamma}{1 - \gamma} < 1.
  \end{equation}
By definition, there exists a strategy which wins in $m(1-\gamma,n)$ steps with probability $1-\gamma$. In the case of failure, we can execute $\Sc$ for an additional $I_n$ steps in expectation. By optimality of $\Sc$, this implies
		that 
		\[ I_n\le (1-\gamma)m(1-\gamma,n)+ \gamma\cdot(m(1-\gamma,n) +  I_n),\]
  and so $I_n \le \frac{1}{1 - \gamma} m(1- \gamma,n)$ after rearrangement. On the other hand, for $n$ sufficiently large we have that $C n/I_n \le (1 + \gamma)$. Therefore, by applying both inequalities,
	\begin{equation} \label{eqn:threshold_inequalities} 
 (1 - \eps) C n\le (1- \eps) (1+ \gamma) I_n \le (1 - \eps) \frac{1 + \gamma}{1 -\gamma} m(1-\gamma,n). \end{equation} On the other hand, since $\scr{P}$ is edge-replaceable, we can apply \Cref{stealing} 
 to argue that $m(1-\gamma,n)$ is a sharp threshold for $\scr{P}$. Therefore, by applying \eqref{eqn:gamma_small} and \eqref{eqn:threshold_inequalities}, we have that for any strategy $\cS_n$,
		\[ \Pr(T_{\cS_n}<(1-\eps)Cn) \le \Pr\left(T_{\cS_n}<(1-\eps) \frac{1 + \gamma}{1 - \gamma} m(1-\gamma,n)\right)=o(1).\]
This establishes the second part of the sharp threshold	definition, and so the proof is complete.
	\end{proof}
	
	\subsection{Proving Theorem \ref{mainsemirandom}} \label{provingmain}
In this section, we restrict our attention to the semi-random graph process and complete
the proof of \Cref{mainsemirandom}. (The definitions we use specific to this process are introduced in \Cref{sec:application_semi_random}). Recall that $\cM'$ and $\cH'$ are the approximate properties
of $\cM$ and $\cH$. Our approach is to verify that $\cM'$ and $\cH'$ satisfy the conditions of
\Cref{scalinglimit}. This will complete the proof of \Cref{mainsemirandom} due to \Cref{transferapprox}.

While verifying condition \ref{item:one_step_expectation}. of \Cref{scalinglimit} for a property $\scr{P}$ is straightforward, verifying condition \ref{item:submax}. is more involved. Because of this, we first define a condition which is more easily verified.
\begin{definition}[Splittable] \label{def:splittable}
Let $\omega = \omega(n)$. We refer to $\scr{P}$ as $\omega$-\textbf{splittable} (or \textbf{splittable} if clear), provided the following guarantee holds: Suppose $i\in [n]$ and $G_0$ is an arbitrary graph on $[n]$.
If the induced graphs $G_0[1,\dots, i]$ and $G_0[i+1,\dots, n]$ each satisfy $\cP$, then there is a strategy with initial graph $G_0$ which satisfies $\cP$ after $\omega$ steps in expectation.
\end{definition}

We now state a technical lemma which allows us to establish condition \ref{item:submax}. from \Cref{scalinglimit}.
	\begin{lemma}\label{lem:fekete_semi}
		Suppose $\cP$ is a linear property which is $n^{\alpha}$-edge-replaceable and $n^{\beta}$-splittable for $\alpha, \beta \in (0,1)$. Then there exists $\delta \in(0,1)$ such that 
		\begin{equation}\label{submax}
		\frac{I_n}{n}\le \max\lrpar{\frac{I_i}{i},\frac{I_{n-i}}{n-i}}+O(n^{\delta-1}) 
		\end{equation}
		for all $i \in [n]$ such that $\min(i,n-i)\ge n^{\delta}$.
	\end{lemma}
	\begin{proof}

In order to prove the lemma, we describe a strategy $\cS$ to be played on $[n]$ whose expected 
number of steps is upper bounded by \eqref{submax}.

First, partition $[n]$ into $A=\{1,\dots, i\}$ and $B=\{i+1,\dots, n\}$. Let
$\Sc_A$ (respectively, $\Sc_B$) be the strategy on vertex set $A$
		(respectively, $B$) guaranteed from Lemma \ref{concstrat} due to the $n^{\alpha}$-edge replacable assumption. Define
		\[N:=\max\lrpar{\frac{n}{i} I_i, \frac{n}{n-i} I_{n-i}}+ n^{1-x}\]
		where  $\delta\in (0,1)$ is a constant to be specified later, $\delta_1 := 1 - \alpha/4$,
        and $x:=\delta(1-\delta_1)/2$.
		During the first $N$ steps, we define $\cS$ to essentially play two games at once: Each time we are given a square in $A$, we choose
		a circle of $A$ via strategy $\Sc_A$, and similarly if we are given a square in $B$, we choose a circle of $B$ via strategy $\Sc_B$.
		
		For $i\ge n^{\delta}$ the number of steps where we play on $A$ is $\bin(N,i/n)$, so
 		\begin{align*}
 		\Pr(\bin(N,i/n)\le I_i+i^{\delta_1}) &\le\exp\lrpar{-\Theta \lrpar{\frac{(n^{-x} i-i^{\delta_1})^2}{Ni/n}} } \\
 		&\le \exp\lrpar{-\Theta\lrpar{\frac{(n^{-x} i)^2}{i}}}= \exp(-\Theta(n^{\delta - 2x}))=O(1/n).
 		\end{align*}
		where the first inequality follows from Chernoff bound, the second inequality follows from $i^{\delta_1}\ll n^{-x} i$, the first equality follows from $i\ge n^{\delta}$ and the last equality follows from $\delta>2x$.
		Therefore, by Lemma \ref{concstrat} the probability that we did not finish the game on $A$ is at most $O(1/n+i^{\delta_1-1})$. Similarly when $n-i\ge n^{\delta}$, the probability that we did not finish the game on $B$ is at most $O(1/n+(n-i)^{\delta_1-1})$. If we finish the game on $A$ and $B$, then since $\scr{P}$ is $n^{\beta}$-splittable, we can construct a graph in $\cP$ on the vertex set $[n]$ in $n^\beta$ expected steps. Otherwise, using the linearity assumption, we just play the game as if the graph is empty and finish in an additional $I_n=O(n)$ expected steps. 
		
		Therefore, for an appropriate choice of $\delta\in (0,1)$ sufficiently close to 1, if $\min(i,n-i)\ge n^{\delta}$ then the total expected number of steps for our strategy is at most 
		\[ N+ n^\delta+ I_n\cdot O\lrpar{\frac{1}{n}+i^{\delta_1-1}+(n-i)^{\delta_1-1}}\le \max\lrpar{\frac{nI_i}{i},\frac{nI_{n-i}}{n-i}}+O(n^{\delta}) ,\]
	     which establishes \eqref{submax}.\end{proof}

We are now ready to prove \Cref{mainsemirandom}. As mentioned, the proof relies on
the quantitative clean-up algorithms of Gao et al. (see \Cref{sec:clean_up_algorithms}).

	\begin{proof}[Proof of Theorem \ref{mainsemirandom}]

We will show that the property $\cM'$ satisfies the condition of Theorem \ref{scalinglimit}. First, it is a linear property since it is known that $I_n/n \in [1/2, 2]$ (as first observed in \cite{ben2020semi}). 
		
  Next, we verify that it is edge-replaceable. For any matching that saturates less than $n-n^{0.99}$ vertices, with probability $1-o(1/\sqrt{n})$, one of the given squares will land on an unsaturated vertex in at most $n^{0.02}$ steps, and we can then form an edge between two unsaturated vertices to form a larger matching. Therefore, $\cM'$ is $n^{0.48}$-edge-replaceable and we can take $\alpha=0.48$.

Condition \ref{item:one_step_expectation}. of \Cref{scalinglimit} is routine to check, but we include the argument here for the sake of completeness. To show $I_{n+1}<I_n+n^\delta$ for any $\delta>0.01$ and large enough $n$, we simply use the strategy that obtains a matching that saturates $n-n^{0.99}$ vertices on the first $n$ vertices in $I_n+O(1)$ expected steps (there are $O(1)$ expected steps where the given square is vertex $n+1$). To obtain a matching that saturates $n+1-(n+1)^{0.99}$ vertices, we simply wait until we are given a square on a vertex that is not saturated by a matching on $n-n^{0.99}$ vertices, which happens in $O(n^{0.01})$ expected steps. Similarly we can show $I_n<I_{n+1}+n^\delta$ by analyzing the optimal strategy that obtains $I_{n+1}$, while ignoring steps that involved vertex $n+1$. 

Next, we check condition \ref{item:submax}. of \Cref{scalinglimit} by verifying $\scr{M}'$ is $n^\beta$-splittable for some $\beta \in (0,1)$. To prove this, we apply \Cref{lem:clean_up_matching_long} of \Cref{sec:clean_up_algorithms} (the quantitative version of the perfect matching clean-up algorithm). Observe that given any $i \in [n]$ from \Cref{def:splittable}, we begin with an initial matching
 of which contains at least $n - 2n^{0.99}$ vertices. By applying \Cref{lem:clean_up_matching_long} with $\eps= 1/n^{0.01}$, we can recover a perfect matching in $O(n^{1-0.005})$ steps in expectation. Thus,
 any $\beta > 1 -0.005$ suffices.   By \Cref{lem:fekete_semi}, we are guaranteed some $\delta \in (0,1)$
 for which condition \ref{item:one_step_expectation}. of \Cref{scalinglimit} is satisfied.

  Therefore $\lim_{n\to\infty} I_n/n=:C_\cM$ exists and $C_\cM n$ is a sharp threshold for the property $\cM'$. Since $\cM'$ is an approximate property of $\cM$, it follows from Lemma \ref{transferapprox} that $C_\cM n$ is a sharp threshold for $\scr{M}$.

		The proof that $\cH'$ satisfies the condition of Theorem \ref{scalinglimit} is similar, and we will only sketch the argument. First, it is a linear property
		since $I_n/n \in [1, 3]$ (as first observed in \cite{ben2020semi}). We will show that if $G_0$ contains a vertex-disjoint union of 2 paths $P_1, P_2$ with total length $\ell-1:=n-n^{0.99}-1$, then we can obtain a path $P$ with length $\ell$ in $O(n^{2/5})$ steps. Without loss of generality, suppose that $P_1$ is the longer path. It is routine to check that in $n^{2/5}$ steps, with probability $1-o(1/\sqrt{n})$, we will receive 2 squares with distance at most $n^{1/4}$ in $P_1$. Therefore, we can consider the strategy that matches all given squares in $P_1$ with one endpoint of $P_2$ if all previous squares are of distance at least $n^{1/4}$ in $P_1$, and then match the first square in $P_1$ that does not satisfy that property with the other endpoint of $P_2$. This strategy will construct a path of length at least $\ell-1-n^{1/4}$ in $O(n^{2/5})$ steps with probability at least $1-o(1/\sqrt{n})$. To extend this to a path of length $\ell$, we simply attached any given unsaturated square with an endpoint of our path. We need to do so $n^{1/4}+1$ times and the expected number of round between receiving unsaturated squares is at most $O(n^{0.01})$. It routinely follows that in, say, $n^{0.27}$ steps we can obtain a path of length $\ell$ with probability at least $1-o(1/\sqrt{n})$. Therefore, $\cH'$ is $n^{0.1}$-edge-replaceable (with room to spare). 

        By an argument similar to the one seen for $\cM'$, it can be seen that condition \ref{item:one_step_expectation}. of Theorem \ref{scalinglimit} holds. Similarly, using \Cref{lem:clean_up_hamiltonian_long} of \Cref{sec:clean_up_algorithms} (quantitative version of Hamiltonian cycle clean-up algorithm), $\scr{H}'$ is splittable, and thus satisfies condition \ref{item:submax}. Therefore, by Theorem \ref{scalinglimit} and Lemma \ref{transferapprox}, there exists a constant $C_\cH>0$ such that $C_\cH n$ is a sharp threshold for $\scr{H}$.
	\end{proof}

\subsection{Proving \Cref{lem:fekete_technical}} \label{sec:fekete_lemma}

	\begin{proof}
	
		Let $\liminf_{n\to\infty} a_n=L$. For convenience let $a_n=a_{\lfloor n\rfloor}$ for non-integer $n$. We note that the inequalities are still true when $i,n$ are not integers (possibly changing $\delta$ and the implicit constants if necessary). Given any $\eps>0$, we will pick some large $N_0$ and $k$ to be specified later. Then for any $N\ge k N_0$, we get that there exists an integer $r$ and $i\in \{0,\dots, k-1\}$ such that 
		\[ (k+i)N_0\le \frac{N}{2^r} \le (k+i+1) N_0\]
		We will then show that
		\begin{align}
		a_{N_0}&\le L+\eps \label{starting}\\
		a_{N}&\le a_{N/2^r}+O(N_0^{-\delta}) \label{divide2}\\
		a_{N/2^r}&\le a_{(k+i+1)N_0}(1+O(1/k))+\ES{O\left(\frac{N_0^{\delta}}{k^{1-\delta}} \right)} \label{multiples}\\
		a_{(k+i+1) N_0}&\le a_{N_0}+O((\log k)N_0^{-\delta}) \label{intdivide}
		\end{align}
		By picking $N_0$ large enough which satisfies \ES{\eqref{consecutivea}, \eqref{submaxa},} \eqref{starting} (which is possible by definition of $L$) and \ES{$k=N_0^{(1+\delta)/(1-\delta)}$}, combining all four inequalities gives us
		\[ a_N\le L+2\eps\]
		for all $N\ge kN_0$.
		
		\ES{To prove \eqref{divide2},}
			we simply use \eqref{submaxa} iteratively, dividing the current index by 2 each time to get that 
			\[ a_{N/2^{\ell}}\le a_{N/2^{\ell+1}} +O((N/2^{\ell})^{-\delta})\]
			for all $\ell=0,\dots, r-1$.
			Hence 
			\[a_{N}\le a_{N/2^r}+ O\lrpar{\sum_{0\le \ell\le r-1} (N/2^{\ell})^{-\delta}}= a_{N/2^r}+O\lrpar{\sum_{i\ge 0} (N_02^i)^{-\delta}}= a_{N/2^r}+O(N_0^{-\delta}).\]

		\ES{To prove \eqref{multiples},}
			we will use \eqref{consecutivea} iteratively.
			We have 
			\[a_{(k+i+1) N_0-1}\le \frac{(k+i+1) N_0}{(k+i+1) N_0-1} a_{(k+i+1) N_0} + ((k+i+1) N_0-1)^{\delta-1}.\]
			\ES{By iterating this $t$ times where $t=(k+i+1) N_0-N/2^r\le N_0$, we have 
            \begin{align*}
                a_{(k+i+1) N_0-t}&\le \frac{(k+i+1) N_0}{(k+i+1) N_0-t} a_{(k+i+1) N_0}+ \frac{\sum_{1\le j\le t} ((k+i+1) N_0-j)^\delta }{(k+i+1) N_0-t}\\ 
                &\le  a_{(k+i+1) N_0}(1+O(1/k))+ O\left(\frac{N_0^{\delta}}{k^{1-\delta}} \right)
            \end{align*}}

		\ES{We will now prove \eqref{intdivide}.}
			For any natural number $j$, we will invoke the following inequalities derived from \eqref{submaxa} : if $j$ is even, then
			\[ a_{j N_0}\le a_{jN_0/2}+O(N_0^{-\delta}),\]
			and if $j$ is odd, then
			\[a_{j N_0}\le \max(a_{(j-1)N_0}, a_{N_0})+O(N_0^{-\delta}).\]
			We will start with $j_0=k+i+1$. Given $j_i$, if $j_i$ is odd let $j_{i+1}=j_i-1$, and if $j_i$ is even let $j_{i+1}=j_i/2$. Clearly $j_{\lceil\log(2k)\rceil}=0$.
			We get that 
			\[ a_{j_\ell N_0}\le \max(a_{j_{\ell+1} N_0}, a_{N_0})+\ell O(N_0^{-\delta})\]
			Therefore  
			\[ a_{j_0 N_0}\le a_{N_0}+ \log(2k)O(N_0^{-\delta})\]
			as desired.
	\end{proof}

	\subsection{Sharp Thresholds Do Not Exist for Local Properties}\label{sec:coarse}
	
	In this section we make some brief observations regarding thresholds for local properties. Given a list of fixed graphs, none of which are forests, we prove that a sharp threshold does \textit{not} exist for the property of containing at least one of these fixed graphs. The results in this section follow from the below result of Behague et al. \cite{behague2021subgraph}, and we include the proofs
	for completeness.
	\begin{thm}[Theorem 1.2 of \cite{behague2021subgraph}]\label{thm:behague}
		Let $H$ be a fixed subgraph of degeneracy $d\ge 2$. Then for any strategy $\cS_n$, 
		if $\tsn$ is the number of rounds needed for $\cS_n$ to build a copy of $H$, then
		\[ \Pr( \tsn\le n^{(d-1)/d}/\omega)=o(1) \]
		for any $\omega\to\infty$.
	\end{thm}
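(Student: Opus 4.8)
Since satisfying a monotone property only gets easier with more rounds, it suffices to fix an arbitrary strategy $\cS_n$ and show that, writing $t:=\lfloor n^{(d-1)/d}/\omega\rfloor$ and $G_t:=G_t^{\cS_n}(n)$, we have $\Pr[G_t\text{ contains a copy of }H]=o(1)$. The plan is to (i) reduce to the $d$-core of $H$; (ii) record a sparsity property of the partially built graph; (iii) prove, by induction on a density parameter, a bound on how many vertices can lie in a small dense subgraph of $G_k$; and (iv) finish with a union bound over the round that first completes a copy.

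Let $H^*$ be the $d$-core of $H$ (repeatedly delete vertices of degree $<d$); it is non-empty, has minimum degree at least $d$, and $H\supseteq H^*$, so it is enough to bound $\Pr[G_t\supseteq H^*]$. Set $h:=|V(H^*)|$, a constant. Since $t^d/n^{d-1}=\omega^{-d}=o(1)$, a union bound over $[n]$ shows that w.h.p.\ no vertex appears among the squares $u_1,\dots,u_t$ more than $d-1$ times; orienting each edge $(u_i,v_i)$ from its square to its circle, on this event every $S\subseteq V(G_t)$ has $e(S)\le (d-1)|S|$, so $G_t$ has degeneracy at most $2(d-1)$. To make this hold surely (avoiding conditioning issues) I would instead run the process $\til G$ which stops adding edges as soon as some vertex has received $d$ squares: then $\til G$ always has degeneracy at most $2(d-1)$, and with probability $1-o(1)$ we have $\til G_t=G_t$.

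For $1\le j\le d-1$ let $N_j(F)$ denote the number of vertices of a graph $F$ lying in some subgraph of $F$ on at most $h$ vertices with minimum degree at least $j$. The crux is the estimate $\bE\,N_j(\til G_k)\le C_j\,t^{j}/n^{j-1}$ for all $k\le t$, with $C_j=C_j(H)$ constant, which I would prove by induction on $j$. The base case $j=1$ is immediate since $N_1(\til G_k)\le|V(\til G_k)|\le 2t$. For the step, observe that a vertex can newly enter $N_{j+1}$ only at a round $k'$ whose new edge $e_{k'}$ lies in a minimum-degree-$(j+1)$ subgraph $J$ on at most $h$ vertices; then $J-e_{k'}$ is a minimum-degree-$j$ subgraph on at most $h$ vertices containing the square $u_{k'}$, so this requires $u_{k'}\in N_j(\til G_{k'-1})$. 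Moreover, bounded degeneracy of $\til G_{k'}$ means there are only $O_H(1)$ connected subgraphs on at most $h$ vertices through $e_{k'}$, hence at most $O_H(1)$ vertices can newly enter $N_{j+1}$ at round $k'$. Since $u_{k'}$ is uniform and independent of the past,
\[
\bE\,N_{j+1}(\til G_k)\ \le\ O_H(1)\sum_{k'\le k}\Pr\big[u_{k'}\in N_j(\til G_{k'-1})\big]\ =\ O_H(1)\sum_{k'\le k}\frac{\bE\,N_j(\til G_{k'-1})}{n}\ \le\ C_{j+1}\,\frac{t^{j+1}}{n^{j}},
\]
which closes the induction.

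Finally, if $\til G_t\supseteq H^*$ and $k'$ is the first round with $\til G_{k'}\supseteq H^*$, then $e_{k'}$ lies in the new copy and its removal leaves a minimum-degree-$(d-1)$ subgraph of $\til G_{k'-1}$ on $h$ vertices containing $u_{k'}$, so $u_{k'}\in N_{d-1}(\til G_{k'-1})$; hence $\Pr[\til G_t\supseteq H^*]\le\sum_{k'\le t}\bE\,N_{d-1}(\til G_{k'-1})/n\le C_{d-1}\,t^{d}/n^{d-1}=C_{d-1}\omega^{-d}=o(1)$, and adding back $\Pr[\til G_t\ne G_t]=o(1)$ proves the theorem. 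The hard part is pinning down the exponents in the bound on $\bE\,N_j$: a naive first-moment bound over the $h$ vertices of a copy of $H^*$ (or even over the Hall-type condition constraining how the squares must be distributed) is off by a power of $n$, and the only remedy I see is to exploit the bounded degeneracy of the partially built graph so that each dense substructure reached through a given new edge is one of only $O(1)$ candidates. This is precisely what lets the ``$j$-dense'' vertex count be reduced to the ``$(j-1)$-dense'' count with a genuine gain of a factor $t/n$ at each stage, and the resulting telescoping product is exactly $n^{(d-1)/d}$; the remaining work is the (routine but somewhat delicate) bookkeeping of the constants, the subgraph-counting bound, and the freeze coupling.
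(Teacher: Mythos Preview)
First, note that the paper does not prove this theorem: it is quoted verbatim from \cite{behague2021subgraph} and used as a black box in the proof of Theorem~\ref{coarse}. So there is no ``paper's own proof'' to compare your sketch against.

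That said, your sketch has a genuine gap in the inductive step. The assertion that ``bounded degeneracy of $\tilde G_{k'}$ means there are only $O_H(1)$ connected subgraphs on at most $h$ vertices through $e_{k'}$'' is false. Bounded degeneracy does not bound the number of small connected subgraphs through a given edge (already a star shows this), and it does not even bound the number of small subgraphs of minimum degree $\ge 2$ through a given edge. Concretely, take $d=3$, $h=4$, and let $\tilde G_{k'-1}$ have vertex set $\{u,v,a_1,b_1,\dots,a_m,b_m\}$ with edges $a_iu,\ a_ib_i,\ b_iv$ for each $i$, oriented $a_i\to u$, $a_i\to b_i$, $b_i\to v$; every out-degree is at most $2=d-1$, so this is consistent with your freezing. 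There is no minimum-degree-$2$ subgraph on $\le 4$ vertices in $\tilde G_{k'-1}$, so $N_2=0$. Adding the single edge $uv$ creates the $m$ four-cycles $\{u,v,a_i,b_i\}$, each of minimum degree $2$, and $N_2$ jumps to $2m+2$. Hence the pointwise bound $\Delta N_{j+1}=O_H(1)$ is wrong, and the displayed recursion for $\bE\,N_{j+1}$ does not follow from what you wrote.

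What the induction actually needs is the averaged statement $\bE[\Delta N_{j+1}\mid \mathcal F_{k'-1}]\le C_H\,N_j(\tilde G_{k'-1})/n$, or equivalently $\sum_{u}\max_{v}\Delta N_{j+1}(u,v)\le C_H\,N_j(\tilde G_{k'-1})$. In the example above this averaged bound \emph{does} hold (the huge jump only occurs for the two squares $u,v$, while $N_1=2m+2$), so your overall scheme may be salvageable---but establishing the averaged inequality in general is a different and nontrivial argument, and likely requires tracking embeddings of specific subgraphs of $H^*$ rather than the cruder quantity $N_j$. Your sketch does not supply this, and the sentence identifying bounded degeneracy as ``precisely what lets'' the telescoping work misdiagnoses where the real difficulty lies.
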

We now prove the following theorem, which implies Theorem~\ref{coarsesimplified}.
	\begin{thm}\label{coarse}
		Let $L$ be a fixed (finite) list of fixed graphs, none of which are forests.  Suppose $d$ is the minimum degeneracy of graphs in $L$. Let $\cP$ be the property of containing a graph in $L$. Then
		\begin{enumerate}
			\item There exists constant $\alpha$ such that for any strategy $\cS$,
			\[ \bP(G^{\cS}_{\alpha n^{(d-1)/d}}\in \cP) \le 1/2 \]
			\item For any constant $\beta$, there exists a constant $\delta=\delta(\beta)>0$ \CM{and a strategy $\scr{S}'$} such that 
			\[ \bP(G^{\cS'}_{\beta n^{(d-1)/d}}\in \cP) \ge \delta \]
		\end{enumerate}
	\end{thm}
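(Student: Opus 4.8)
The plan is to treat the two parts separately: part~(1) is a soft deduction from Theorem~\ref{thm:behague}, while part~(2) requires an explicit strategy. Write $L=\{H_1,\dots,H_r\}$ and let $d_i$ be the degeneracy of $H_i$; since no $H_i$ is a forest each $H_i$ contains a cycle, so $d_i\ge 2$, and $d=\min_i d_i\ge 2$. Since $G\in\cP$ exactly when $G$ contains some $H_i$ as a subgraph and $r=O(1)$, for part~(1) it suffices to bound, for each $i$ and each strategy, the probability of building a copy of $H_i$ within $\alpha n^{(d-1)/d}$ rounds; and for part~(2) it suffices to exhibit a strategy that builds a single copy of the graph $H_{i_0}$ attaining the minimum degeneracy $d$.

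For part~(1), I would first promote Theorem~\ref{thm:behague} to the ``constant multiple'' scale: for every fixed $H$ of degeneracy $d'\ge 2$ and every $\eta>0$ there is a constant $\alpha_H>0$ such that, for all large $n$, every strategy builds $H$ within $\alpha_H n^{(d'-1)/d'}$ rounds with probability at most $\eta$. This is a compactness-type deduction: if it failed then, taking $\alpha=1/m$, one could extract $n_m\to\infty$ and strategies building $H$ within $n_m^{(d'-1)/d'}/m$ rounds with probability $>\eta$; setting $\omega(n):=m$ on $[n_m,n_{m+1})$ gives a function $\omega\to\infty$ and a strategy sequence violating Theorem~\ref{thm:behague}. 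Now apply this with $\eta=1/(2r)$ to each $H_i$ at its own scale $n^{(d_i-1)/d_i}$; since $n^{(d-1)/d}\le n^{(d_i-1)/d_i}$, building $H_i$ within $\alpha_{H_i}n^{(d-1)/d}$ rounds is at least as hard, so it too has probability at most $1/(2r)$. Taking $\alpha:=\min_i\alpha_{H_i}$ and a union bound over $i$ gives $\bP(G^{\cS}_{\alpha n^{(d-1)/d}}\in\cP)\le r\cdot\tfrac{1}{2r}=\tfrac12$, which is part~(1).

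Part~(2) is the substantive step. For a given $\beta>0$ I would construct a multi-phase strategy for a copy of $H_{i_0}$ along a degeneracy ordering $v_1,\dots,v_k$ (each $v_j$ with at most $d$ back-neighbours), alternating \emph{deterministic} phases---greedily growing a large ``bouquet'' of partial copies sharing common vertices, one edge per round---with \emph{waiting} phases---waiting for the random square to land on a designated already-embedded vertex and then extending a partial copy via the circle. One constructs such a strategy so that it finishes within $Cn^{(d-1)/d}$ rounds w.h.p.\ for a suitable constant $C$; the extra point for small $\beta$ is that the phase budgets can be rescaled by a constant factor so that the whole construction fits inside $\beta n^{(d-1)/d}$ rounds with probability bounded below by a constant $\delta(\beta)>0$. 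Here one uses that a single geometric waiting time is below any fixed constant fraction of its mean with probability $\Omega(1)$, whereas a negative-binomial phase (a sum of growing many geometrics) concentrates; one therefore tunes the bouquet sizes so that every phase except one single-geometric phase has mean within budget, and ``gets lucky'' on that last phase. Since $\cP\supseteq\{G:H_{i_0}\subseteq G\}$, this yields part~(2).

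The main obstacle is precisely part~(2) for small $\beta$: one cannot simply truncate the $O(n^{(d-1)/d})$-round w.h.p.\ strategy, because it contains a deterministic bouquet-building phase of length $\Theta(n^{(d-1)/d})$ which already overshoots $\beta n^{(d-1)/d}$ once $\beta$ is small, so a genuinely $\beta$-dependent strategy is needed, and one must check that the shortened deterministic phases still leave positive probability of completion---this forces a careful balance between bouquet sizes and geometric waiting times, though no new ideas. Finally, combining the two parts shows $\cP$ has no sharp threshold: part~(1) forces any candidate threshold $m^{*}$ to satisfy $m^{*}=\Omega(n^{(d-1)/d})$, while part~(2) with $\beta\to 0$ forces $m^{*}=o(n^{(d-1)/d})$, a contradiction.
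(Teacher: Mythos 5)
Your Part~(1) argument is essentially the paper's: apply Theorem~\ref{thm:behague} to each $H_i\in L$ (noting $n^{(d-1)/d}\le n^{(d_i-1)/d_i}$) and union-bound. The paper simply asserts the existence of $\alpha$; your diagonalization (take $\alpha=1/m$, extract $n_m\to\infty$, and define $\omega(n)=m$ on $[n_m,n_{m+1})$) is a clean way to fill in that step, and is correct.

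Part~(2) is where you diverge from the paper, and where your sketch has a gap. You propose a \emph{deterministic} bouquet-building phase in which the player ``greedily grow[s] a large bouquet of partial copies sharing common vertices, one edge per round.'' But in the semi-random process the player chooses only the circle $v_t$; the square $u_t$ is uniform on $[n]$. So the only structures that can be built deterministically this way are (unions of) stars: every added edge has a random endpoint. For $d=2$ this is fine --- a partial copy of a $2$-degenerate graph minus its last vertex is $1$-degenerate, i.e.\ star-like --- but for $d\ge 3$ the partial copies $H[v_1,\dots,v_{k-1}]$ themselves have degeneracy $\ge 2$, and a bouquet of such copies cannot be grown in a phase where one endpoint of every edge is random; you would already have to wait for multi-fold square-repeats to build even one partial copy, collapsing the proposed deterministic/waiting dichotomy. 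The rebalancing you flag for small $\beta$ (shrink the deterministic phase and ``get lucky'' on one geometric) compounds this: it is genuinely delicate precisely because a deterministic phase has zero slack.

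The paper's proof avoids all of this by using the Behague et al.\ strategy, which is \emph{pure waiting}: proceed along a degeneracy ordering $v_1,\dots,v_k$; in phase $i$, whenever a fresh vertex $v$ appears as a square for the $j$-th time, set the circle to $v_{i_j}$ (the $j$-th back-neighbour of $v_i$); the phase ends as soon as some $v$ has appeared $\ell_i\le d$ times. With budget $m=\tfrac{\beta}{k}n^{(d-1)/d}$ per phase, the expected number of vertices appearing $\ge\ell_i$ times is $\approx n\binom{m}{\ell_i}n^{-\ell_i}=\Omega\bigl((\beta/k)^{\ell_i}\bigr)=\Omega_\beta(1)$, so each phase succeeds with constant probability; multiplying over the $k=O(1)$ phases gives the required $\delta(\beta)>0$. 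There is no deterministic component, hence no rebalancing: the budget just scales linearly in $\beta$ and the success probability degrades continuously to a positive constant. You should replace your bouquet construction with this direct strategy. (Your concluding observation --- that parts (1) and (2) together rule out a sharp threshold --- is correct, and is indeed the point of the theorem.)
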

	\begin{proof}
		To prove 1., observe that by Theorem \ref{thm:behague} there must exist some $\alpha > 0$ such that for any graph $H\in L$ and any strategy $\cS$, we have that
		\[ \bP(H\in G^{\cS}_{\alpha n^{(d-1)/d}}) \le \frac{1}{2|L|}. \]
		The proof now immediately follows from an application of union bound over all $H\in L$.
		
		Proving 2. is slightly more involved. \CM{Fix a graph $H$ of $L$ of degeneracy $d$.} We first describe the strategy $\scr{S}'$ from \cite{behague2021subgraph}. Since $H$ is $d$-degenerate, we may consider an ordering of the vertices of $H$ (say, $(v_1,\dots, v_k)$) such that $v_i$ has at most $d$ neighbours in $\{v_1,\dots, v_{i-1}\}$. We divide the game into $k$ phases, where in phase $i$ we build the induced graph $H[v_1,\dots, v_i]$. Suppose $v_i$ is adjacent to $v_{i_1},\dots, v_{i_\ell}$ where $\ell\le d$, and we have a copy of $H[v_1,\dots, v_{i-1}]$. To complete phase $i$, if a vertex $v\neq v_1,\dots, v_{i-1}$ is given as a square for the $j^{th}$ time, we match it with $v_{i_j}$. Therefore, if one of such $v$ is given as a square at least $d$ times, then we have successfully built a copy of $H[v_1,\dots, v_i]$. By a standard analysis, this succeeds in $\frac{\beta}{k}n^{(d-1)/d}$ steps with probability $\Omega(1)$. 
		
		Therefore, the probability that $\scr{S}'$ constructs $H$ in $\beta n^{(d-1)/d}$ steps is $\Omega(1)$,
        and so the proof is complete.
	\end{proof}
\ES{
	\begin{proof}[Proof of Theorem~\ref{coarsesimplified}]
	    For the sake of contradiction, suppose that a sharp threshold $m^*=m^*(n)$ exists. Let $\cP$ be the property of containing a graph in $L$. By the definition of sharp threshold, we have that for any strategy $\cS_n$,
        \[ \bP(G^{\cS}_{m^*/2}\in \cP)=o(1). \]
        Thus, taking $\alpha$ and $\beta := \alpha/4$ as in \Cref{coarse}, the second part of
        \Cref{coarse} implies that $m^*/2 \le \alpha n^{(d-1)/d}/4$. From the definition of sharp threshold, we also know that there exists a strategy $\cS'_n$ such that 
        \[ \bP(G^{\cS'_n}_{2m^*}\in \cP)=1-o(1).\]
        But $2m^*\le \alpha n^{(d-1)/d}$, and so by following the strategy of ${\cS'_n}$,
        \[ 1-o(1)=\bP(G^{\cS'_n}_{2m^*}\in \cP)\le\bP(G^{\cS'_n}_{\alpha n^{(d-1)/d}}\in \cP)\le 1/2,\]
        where the final inequality follows from the definition of $\alpha$ in first part of \Cref{coarse}.
        This is a contradiction for sufficiently large $n$, and so the proof is complete.
	\end{proof}}
	
	\begin{remark}\label{forest}
		The case $d=1$ i.e. $H$ is a forest, is trivial. If $H$ is a fixed graph of degeneracy $d=1$, then
		for any strategy $\cS_n$ 
		\[ \Pr(T_{\cS_n}<|V(H)|-1)=o(1),\]
		and there exists a strategy $\cS_n$ such that 
		\[ \Pr(T_{\cS_n}=|V(H)|-1)=1-o(1).\]
	\end{remark}
	
	\section{Conclusion}
	
	Our result allows us to prove the existence of sharp thresholds for edge-replaceable properties in adaptive random graph processes. As we have seen in this paper, being edge-replaceable is a strong enough restriction that natural properties such as $\cM$ and $\cH$ do not satisfy it in the semi-random graph process. We resolved this issue by proving that the properties of interest can be approximated by weaker properties which \textit{are} edge-replaceable. 
	
	It would be of great interest if one can develop more powerful tools to establish sharp thresholds in adaptive random graph processes (or adaptive random processes in general) when edge-replaceable properties do not hold even in the approximate sense. A starting point would
	be to fully resolve the following problem proposed by Ben-Eliezer et al. \cite{ben-eliezer_fast_journal_2020}:
	
	\begin{question} For all $r\ge 2$, does the property of having a $K_r$-factor admit a sharp threshold in the semi-random graph process?
	\end{question}
	
	We resolved this for $r=2$, and it would be interesting to solve this for all constant $r$ or $r = r(n)$ which grows slowly with $n$. More generally, one could ask for a sharp threshold result for the property of containing a certain spanning graph with large minimum degree. This may require new techniques,
	as it seems like such properties are generally not edge-replaceable, even in the approximate sense.
	
	For each property $\cP\in \{\cM,\cH\}$, we have shown that there exists a constant $C_{\cP}$ such that $C_{\cP}n$ is a sharp threshold in the semi-random graph process. That being said, currently only upper and lower bounds are known for $C_{\cP}$ (as implied in \cite{gao2021perfect,gao_fully_2022}).
	\begin{question}
	What is the exact value of $C_{\cP}$ in Theorem \ref{mainsemirandom}?
	\end{question}
	This question currently appears out of reach, as it seems to necessitate designing
	an asymptotically optimal strategy for $\scr{P}$. Our sharp threshold results indicate that
	in order to identify $C_{\cP}$, it suffices to find an optimal strategy 
	which satisfies $\scr{P}$ with (small) constant probability. We hope that this reduction may prove useful in later works.

\subsection*{Acknowledgement.} The authors would like to thank Lutz Warnke for useful discussions on concentration inequalities for martingales.

\bibliographystyle{plain}
\bibliography{refs}

\appendix





\section{Clean-up Algorithms} \label{sec:clean_up_algorithms}

In this section, we state the explicit guarantees of the clean-up algorithms
introduced by Gao et al. \cite{gao_fully_2022,gao2021perfect}. Note that these algorithms
were originally proven to hold w.h.p. in a fixed number of steps (i.e., a Monte Carlo algorithm). By executing independent runs
of such an algorithm until the first success occurs, we can ensure that the clean-up succeeds with probability $1$ while
getting the same asymptotic upper bound on the \textit{expected} number of steps (i.e., we
convert to a Las Vegas algorithm). We make use of these expectation
bounds in the proof of \Cref{mainsemirandom}.


\subsection{Hamiltonian Cycles}

\begin{lemma}[Lemma 2.5, \cite{gao_fully_2022}] \label{lem:clean_up_hamiltonian_long}
Let $0 < \eps = \eps(n) < 1/1000$, and suppose that $P$ is a path on $(1-\eps)n$ vertices
of $[n]$. Then, given $P$ initially, there exists a strategy for the semi-random graph process which builds
a Hamiltonian cycle from $P$ in $O(\sqrt{\eps}n + n^{3/4}\log^2 n)$ steps w.h.p.
Note that the constants hidden in the $O(\cdot)$ notation does not depend on $\eps$.
\end{lemma}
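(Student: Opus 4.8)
The plan is to deduce the statement directly from the literature: it is, up to cosmetic rephrasing, Lemma~2.5 of \cite{gao_fully_2022}, so the proof reduces to quoting that lemma and checking that its hypotheses line up with ours. Concretely, I would (i) recall that \cite{gao_fully_2022} establishes exactly this kind of path-to-Hamilton-cycle clean-up guarantee for the semi-random graph process; (ii) verify that the regime $0<\eps<1/1000$ we impose sits inside the regime for which their analysis is carried out; and (iii) confirm that the implicit constant in the $O(\cdot)$ does not depend on $\eps$, which is visible from their proof since $\eps$ enters only through the count $\eps n$ of still-uncovered vertices, and the relevant estimates are polynomial in that count with absolute coefficients.

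For completeness I would also record a brief sketch of why such a bound holds. The strategy of \cite{gao_fully_2022} runs in two phases. In the first phase one repeatedly applies P\'osa-type rotations and extensions: each time a presented square lands on a current endpoint of the path, or on one of the $\Theta(n)$ ``endpoint candidates'' reachable from the fixed endpoint through already-added chords, one either adds a chord to produce a fresh endpoint candidate or, when an uncovered vertex is waiting, extends the path so as to swallow it. Maintaining a linear-sized reservoir of endpoint candidates keeps the expected cost of absorbing each uncovered vertex bounded, and tracking the success probabilities with Chernoff and Azuma bounds shows that all but $O(n^{3/4})$ of the $\eps n$ missing vertices are absorbed within $O(\sqrt{\eps}\,n)$ steps w.h.p. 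In the second phase the remaining $O(n^{3/4})$ uncovered vertices, together with the final closing of the path into a cycle, are dealt with by a coarser rotation/extension routine that costs $O(n^{3/4}\log^2 n)$ steps w.h.p.; this produces the additive $n^{3/4}\log^2 n$ term.

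The only point that genuinely needs attention --- and the one I would emphasize --- is the uniformity of the implicit constant in $\eps$: in the proof of Theorem~\ref{mainsemirandom} this clean-up cost is added to the linear main term with $\eps = n^{-0.01}$, so a constant degenerating as $\eps\to 0$ would render the estimate useless. Since this uniformity is already built into the statement of Lemma~2.5 of \cite{gao_fully_2022} and follows from its proof, no extra work is needed; absent that, the only real obstacle would be re-deriving their two-phase argument in full, which I would not undertake here.
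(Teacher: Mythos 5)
Your proposal is correct and takes essentially the same approach as the paper: the statement is a direct quotation of Lemma~2.5 of \cite{gao_fully_2022}, and the paper itself supplies no independent proof, only the citation. Your added sketch of the two-phase rotation-extension argument and your emphasis on $\eps$-uniformity of the implicit constant go slightly beyond what the paper records, but they do not change the logical content, which is simply to invoke the cited result.
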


\subsection{Perfect Matchings}

Gao et al. \cite{gao2021perfect} provide a clean-up algorithm with the
following guarantee. For $\eps = 10^{-14}$, if the algorithm is presented a matching $M$ on at least $(1-\eps) n$ vertices of $[n]$, then $M$ can be extended to a perfect matching in at most $100 \sqrt{\eps} n$ steps w.h.p. The analysis of \cite{gao2021perfect} holds when $\eps = \eps(n)$
satisfies $\eps(n) \rightarrow 0$ as $n \rightarrow \infty$, however an additional
term must be added to $100 \sqrt{\eps} n$ if $\eps$ tends to $0$ sufficiently fast.
This is explicitly proven in the first author's thesis \cite{macrury-thesis2023},
and we restate the lemma below for convenience.
\begin{lemma}[Lemma 5.1.8, \cite{macrury-thesis2023}] \label{lem:clean_up_matching_long}
Let $0 < \eps = \eps(n) < 1$, and suppose that $M$ is a matching on $(1-\eps)n$ vertices
of $[n]$. Then, given $M$ initially, there exists a strategy for the semi-random graph process which builds
a perfect matching from $M$ in $O(\sqrt{\eps}n + n^{3/4} \log^2 n)$ steps w.h.p.
Note that the constants hidden in the $O(\cdot)$ notation does not depend on $\eps$.

\end{lemma}

\end{document}